\newtheorem{assumption}{Assumption}
\pgfplotsset{compat=1.15,
	height=0.8\columnwidth,
	width=\columnwidth
}
\newcommand{%
	\scalebox{}{\input{}}  
}[2]{%
	\scalebox{#1}{\input{#2}}  
}
\definecolor{burgundy}{rgb}{0.5, 0.0, 0.13}
\newcommand{\tol}{\mathsf{tol}}
\newcommand{\R}{\mathbb{R}}
\newcommand{\yy}{y}
\newcommand{\lv}{\left \vert}
\newcommand{\rv}{\right \vert}
\newcommand{\lno}{\left \Vert}
\newcommand{\rno}{\right \Vert}
\newcommand{\codecomm}[1]{\color{gray}{\# #1}\color{black}}
\newcommand{\E}{\mathbb{E}}
\newcommand{\eff}{{\mathcal{F}}}
\newcommand{\V}{\mathbb{V}}
\newcommand{\te}{\theta}
\newcommand{\pot}{\Phi(\te;\yy)}
\newcommand{\poth}{\Phi_\ell(\te;\yy)}
\newcommand{\potl}{{\Phi_{\ell-1}(\te;\yy)}}
\newcommand{\potm}{{\Phi_{\ell}(\te;\yy)}}
\newcommand{\Q}{\mathsf{QoI}}
\newcommand{\enu}{\mathbb{E}_{\nu_{\ell}}}
\newcommand{\Pisl}{{\bm{P}_{\ell}}}
\newcommand{\Psml}{{\bm{P}^*_{\ell}}}
\newcommand{\plj}{{p_{\ell,j}}}
\newcommand{\Plj}{{P_{\ell,j}}}
\newcommand{\pisl}{{\bm{p}_{\ell}}}
\newcommand{\nuop}{\widehat{\nu}}
\newcommand{\py}{\pi^y}
\newcommand{\pyl}{\pi^y_\ell}
\newcommand{\pyll}{\pi^y_{\ell-1}}
\newcommand{\my}{\mu^y}
\newcommand{\myl}{\mu^y_\ell}
\newcommand{\myll}{\mu^y_{\ell-1}}
\newcommand{\nul}{\nu_{\ell}}
\newcommand{\ql}{Q_\ell}
\newcommand{\qoi}{\mathsf{QoI}}
\newcommand{\lev}{\mathsf{L}}
\newcommand{\pr}{\mathsf{P}}
\newcommand{\diff}{\mathrm{d}}
\newcommand{\mlQ}{\widehat{\Q}_{\mathsf{L},\{N_{\ell}\}_{\ell=0}^\mathsf{L}}}
\newcommand{\tel}{\te_{\ell,\ell}}
\newcommand{\tell}{\te_{\ell,\ell-1}}
\newcommand{\telj}{\te_{\ell,j}}
\newcommand{\teln}{\te^n_{\ell,\ell}}
\newcommand{\telln}{\te^n_{\ell,\ell-1}}
\newcommand{\tebl}{\bm{\te}_{\ell}}
\newcommand{\tebln}{\bm{\te}^n_{\ell}}
\newcommand{\calx}{\mathsf{X}}
\newcommand{\Te}{\mathsf{X}}
\newcommand{\calb}{\mathcal{B}}
\pgfplotsset{compat=newest}
\newcommand{\X}{\mathsf{X}}
\newcommand{\Y}{\mathsf{Y}}
\def\one{\mbox{1\hspace{-4.25pt}\fontsize{12}{14.4}\selectfont\textrm{1}}} 
\newcommand{\car}[1]{\one_{\left\{ #1 \right\}}}
\newcommand{\VL}{V_{\ell-1,\ell}}
\newcommand{\mup}{{\mu_\mathrm{pr}}}
\newcommand{\muy}{\mu^y}
\newcommand{\etat}{\eta^\mathsf{T}}
\crefname{hypothesis}{Hypothesis}{Hypotheses}
\title{Analysis of a class of Multi-Level Markov Chain Monte Carlo algorithms based on Independent Metropolis-Hastings \thanks{Submitted to the editors DATE.
\funding{This project is partly founded through the King Abdullah University of Science and Technology Grant OSR-2015- CRG4-2584-01: ``Advanced Multi-Level sampling techniques for Bayesian Inverse Problems with applications to subsurface''. JPMC and FN also acknowledge support from the {Swiss Data Science Center (SDSC) Grant P18-09}. RT  acknowledges support from the Alexander von Humboldt foundation.}}}
\author{ Juan P. Madrigal-Cianci\thanks{SB-MATH-CSQI, \'{E}cole Polytechnique F\'{e}d\'{e}rale de Lausanne, Lausanne Switzerland
		(\email{juan.madrigalcianci@epfl.ch, fabio.nobile@epfl.ch}).}
	\and Fabio Nobile\footnotemark[2]
	\and Ra\'ul Tempone\footnotemark[3] \thanks{Computer, Electrical and  Mathematical Sciences and Engineering, KAUST, Thuwal, Saudi Arabia, and  Alexander von Humboldt professor in Mathematics of Uncertainty Quantification, RWTH Aachen University, Aachen, Germany (\email{raul.tempone@kaust.edu.sa,tempone@uq.rwth-aachen.de}). }}
\date{\today}
\begin{document}

\maketitle


\begin{abstract}
 In this work we present, analyze, and implement a class of Multi-Level Markov chain Monte Carlo (ML-MCMC) algorithms based on independent Metropolis-Hastings proposals for  Bayesian inverse problems. In this context, the likelihood function involves solving a complex differential model, which is then approximated on a sequence of increasingly accurate discretizations. The key point of this algorithm is to construct highly coupled Markov chains together with the standard Multi-level Monte Carlo argument to obtain a better cost-tolerance complexity than a single level MCMC algorithm. Our method extends the ideas of Dodwell, et al. ``A hierarchical multilevel Markov chain Monte Carlo algorithm with applications to uncertainty quantification in subsurface flow,'' \textit{SIAM/ASA Journal on Uncertainty Quantification 3.1 (2015): 1075-1108,} to a wider range of proposal distributions.  We present a thorough convergence analysis of the  ML-MCMC method proposed, and show, in particular, that (i) under some mild conditions on the (independent) proposals and the family of posteriors,  there exists a unique invariant probability measure for the coupled chains generated by our method, and (ii) that  such coupled chains are uniformly ergodic. We also generalize the cost-tolerance theorem of Dodwell et al., to our  wider class of ML-MCMC algorithms. Finally, we propose a self-tuning continuation-type ML-MCMC algorithm (C-ML-MCMC). The presented method is tested on an array of academic examples, where some of our theoretical results are numerically verified. These numerical experiments evidence how our extended ML-MCMC  method is  robust  when targeting some \emph{pathological} posteriors, for which some of the previously proposed ML-MCMC algorithms fail. 
\end{abstract}
\begin{keywords}
Bayesian inversion; Multi-level Monte Carlo; Markov chain Monte Carlo;  Uncertainty quantification.
\end{keywords}

\begin{AMS}
	35R30, 62F15, 62P30, 65M32, 60J05, 60J22
\end{AMS}
\section{Introduction}\label{sec:intro}
By now, Multi-Level Monte Carlo (MLMC) methods are  well-established computational techniques \cite{giles2008multilevel} to compute expectations that arise in stochastic simulations in cases in which the stochastic model can not be simulated exactly, rather approximated, at different levels of accuracy and, as such, at different computational costs. Despite their wide-spread applicability,  extending these MLMC ideas to  Multi-Level Markov Chain Monte Carlo (ML-MCMC) methods to compute expectations with respect to (w.r.t) a complex target distribution from which independent (whether exact or approximate) sampling is not accessible, has only recently been attempted, with only a handful of works dedicated to this task. This situation arises, for instance, in Bayesian inverse problems where one wishes to compute the expectation $\E_{\my}[\Q]$ of some output quantity of interest $\Q$ with respect to the posterior measure $\my$ of some parameters $\te\in \Te$ given some indirect noise measurements $y=\eff(\te)+\eta$, where $\eta$ is the additive noise and $\eff $ is the forward operator, which may involve the solution of a differential equation. At their core, ML-MCMC methods for BIP introduce a hierarchy of discretization levels $\ell=0,1,\dots, \lev$ of the underlying forward operator, which in turn induce a family of posterior  probability  measures $\muy_\ell$, approximating $\muy$ with increasing levels of accuracy  as $\ell\to\infty$. Given some $\muy$-integrable quantity of interest $\Q,$ one can approximate the expectation of $\Q$ over $\muy$ by the usual telescoping sum argument of MLMC, namely \begin{align}
\E_{\mu^y}[\Q]\simeq \E_{\mu^y_\lev}[\Q_\lev]&=\E_{\mu^y_0}[\Q_0]+\sum_{\ell=1}^\lev\left(\E_{\mu^y_\ell}[\Q_\ell]-\E_{\mu^y_{\ell-1}}[\Q_{\ell-1}]\right)\\
&=\sum_{\ell=0}^\lev\Delta \mathsf{E}_\ell,\label{eq:tel}
\end{align}
with $\Delta\mathsf{E}_\ell:=\E_{\mu^y_\ell}[\Q_\ell]-\E_{\mu^y_{\ell-1}}[\Q_{\ell-1}]$, $\Delta\mathsf{E}_{0}=\E_{\mu^y_0}[\Q_0]$ and where, for $\ell=0,1,\dots,\lev,$ $\Q_\ell$ is a  $\muy_{\ell}$-integrable, level-$\ell$ approximation of the quantity of interest $\Q$. This telescoping sum presents the basis for various types of multi-level techniques for Bayesian inverse problems. The work \cite{hoang2013complexity}, for example, approximates the expectation \eqref{eq:tel} by splitting each $\Delta \mathsf{E}_\ell$ into three different terms, which are then computed using a mixture of importance-sampling and MCMC techniques. A multi-index generalization of such method is presented in \cite{jasra2018multi}. In addition, similar multi-level ideas have also been attempted in the context of Multi-Level Sequential Monte Carlo (MLSMC) in the works \cite{beskos2017multilevel,jasra2017multilevel,latz2017multilevel}. 

 In this work, we rather follow the approach proposed in \cite{dodwell2015hierarchical}, which is probably the first proposition of multi-level ideas for BIP and which consists in approximating $\E_{\mu^y_\lev}[\Q_\lev]$  by the following ergodic estimator:
\begin{align}
 \E_{\mu^y_\lev}[\Q_\lev]&\approx \frac{1}{N_0}\sum_{n=0}^{N_\ell}\Q_0(\te_{0,0}^{(n)})+\sum_{\ell=1}^\lev\frac{1}{N_\ell}\sum_{n=0}^{N_\ell} \underbrace{\Q_\ell(\te_{\ell,\ell}^{n})-\Q_{\ell-1}(\te_{\ell,\ell-1}^{n})}_\text{$:=Y^{n}_\ell$},\label{eq:MLMCMC_est}
\end{align}
where $\{\te^{n}_{\cdot,\ell}\}_{n=0}^{N_\ell}$ is an ergodic Markov chain with invariant distribution $\myl$. The key idea here is to couple the chains $\{\te_{\ell,\ell-1}^{n},\tel^{n}\}_{n=0}^{N_\ell}$ so that they are highly correlated and the variance of the ergodic estimator $\V[N_\ell^{-1}\sum_n Y_\ell^{n}]$ is smaller and smaller as $\ell$ increases.   By carefully choosing $N_\ell,$  this method can achieve a much better sampling complexity (in terms of cost versus tolerance) than its single level counterparts (see \cite{dodwell2015hierarchical}). 

Most of the existing literature on ML-MCMC has focused on the construction of these types of couplings \cite{cui2019multilevel,dodwell2015hierarchical}. In \cite{dodwell2015hierarchical}, the authors use (an approximation of) the posterior distribution at the previous discretization level $\ell-1$ as a proposal for level $\ell$. This is practically implemented by   sub-sampling from the chain $\{\te^{n}_{\ell-1,\ell-1}\}_{n=0}^{N_{\ell-1}}$.

%
 
 Such an idea has been recently expanded in \cite{cui2019multilevel}, where the sub-sampling idea is combined with the so-called Dimension Independent Likelihood Informed (DILI) MCMC method of \cite{cui2016dimension} to generate proposed samples at level 0 in their ML-MCMC algorithm. Some further work  combining multi-level Monte Carlo ideas with Bayesian inference has been presented in \cite{jasra2018markov}, where the authors use rejection-free Markov transitions kernels, such as Gibbs sampler, in order to couple the multi-level MCMC chains at two consecutive levels. 

 However, investigating more theoretical aspects of ML-MCMC algorithms, such as the existence of an invariant measure for the coupled chains and the type of convergence to such measure (provided it exists), have been widely overlooked, and one of the aims of this paper is to fill this gap. 
 
  This work presents several novel contributions. First, we present a ML-MCMC algorithm where chains are coupled using independent Metropolis Hastings-type proposals as in \cite{dodwell2015hierarchical},  however, allowing  for a wider class of admissible proposals. In particular, we show that the sub-sampling approach in \cite{dodwell2015hierarchical} can be replaced by a properly chosen independent Metropolis proposal (IMH) (that is, a proposal for which the proposed state is independent of the current state of the chain), which proposes the same state to the two chains $\{\te_{\ell,\ell-1}^{n},\te^{n}_{\ell,\ell}\}_{n=0}^{N_\ell}$ targeting $\myll,\myl$ respectively, which is then accepted by the usual Metropolis-Hastings criterion. This ensures the coupling of the chains. Such proposal can be, for example, the prior, a Laplace approximation, or even a kernel density approximation of the posterior at the previous level. Obviously, the choice of proposal has a direct impact on the joint invariant distribution $\nul$ of the coupled chain $\{\te_{\ell,\ell-1}^{n},\te^{n}_{\ell,\ell}\}_{n=0}^{N_\ell}$ provided it exists, hence on the variance of the ergodic estimator $N_\ell^{-1}\sum_n Y_\ell^n$.
  
  The main contribution of this work is  an in-depth convergence analysis of our extended ML-MCMC method. More precisely,  we provide sufficient conditions on the (marginal) level $\ell$ posterior and on the proposal probability measure $Q_\ell$ so that there exists a unique joint invariant probability measure for the coupled chain. Such a contribution is presented in Theorem \ref{thm:main_existence_ergo}, where it is shown that, under some mild conditions on $Q_\ell,\myl,\myll$, the ML-MCMC algorithm presented here-in (i) has a unique, invariant probability measure for the joint chain at level $\ell$ and (ii) is uniformly ergodic. Following \cite{rudolf2011explicit}, we provide also computable,  quantitative, non-asymptotic error estimators for the ergodic estimator \eqref{eq:tel}. This allows us on the one hand, to generalize the cost-tolerance result of \cite{dodwell2015hierarchical} to our extended MLMCMC method and, on the other hand, to propose an adaptive ML-MCMC algorithm in which the number of levels $\lev$ and chain lengths $N_\ell$ are determined on the fly, in the spirit of the continuation MLMC method of \cite{collier2015continuation}. 

The rest of the paper is organized as follows. In Section \ref{sec:background} we introduce some notation and standard results in the theory of Markov chains and Bayesian inverse problems.   We present our ML-MCMC method in Section \ref{sec:ml_mcmc}, and then proceed to analyze its convergence in Section \ref{sec:analysis}. Section \ref{sec:cost_analysis} is dedicated to the generalization to our case of the cost-tolerance analysis result of \cite{dodwell2015hierarchical}. In Section \ref{sec:cMLMCMC} we discuss the continuation-type algorithm and implementation details.  In Section \ref{sec:Numerical_experiments} we numerically verify some of our results in two academic examples, and, lastly, we present some conclusions and finalizing remarks in Section \ref{sec:conclussions}.

\section{Background}\label{sec:background}

\subsection{Background on Markov chains}
We recall some definitions and standard results from the theory of Markov chains. Let $(\calx, \lno \cdot\rno_\calx)$ be a separable Banach space with Borel  $\sigma$-algebra $\mathcal{B}(\calx),$ and denote by $\mathcal{M}(\calx)$ the set of  probability measures on $(\calx,\mathcal{B}(\calx))$. For some $\mu\in \mathcal{M}(\calx)$ and any $q\in[1,\infty],$ we define the Banach spaces
\begin{align}
L_q(\calx,\mu)&:=
\left\{ f:\calx\to \R \text{ $\mu$-measurable s.t. } \int_{\calx}  |f|^q(\te)\mu(\diff\te)<\infty   \right\},&\quad\text{if } q<\infty,\\
L_\infty(\calx,\mu)&:=\left\{ f:\calx\to \R \text{ $\mu$-measurable  s.t. } \underset{\te\in \calx}{\mathrm{ess} \sup} |f(\te)|<\infty   \right\},&\quad\text{if } q=\infty, 
\end{align}
endowed with the norms
\begin{align}
\lno f\rno_{ L_q(\calx,\mu)}:=&\left(\int_{\calx}|f|^q(\te)\mu(\diff\te)\right)^{1/q},\quad&\text{if } q<\infty,\\
\lno f\rno_{ L_\infty(\calx,\mu)}:=&\underset{\te\in \calx}{\mathrm{ess} \sup} |f(\te)|,\quad&\text{if } q=\infty.
\end{align}
In the particular case where $q=2$,  $L_2(\calx,\mu)$ is a Hilbert space equipped with the inner product
\begin{align}
\langle f,g\rangle_\mu:=&\int_{\calx} f(\te) g(\te)\mu(\diff \te), \ f,g\in L_2(\calx,\mu). \\
\end{align}
For any $q\in[1,\infty],$ we define the subspace $ L_q^0(\calx,\mu)$  of $ L_q(\calx,\mu)$  as 
\begin{align}
L^0_q(\calx,\mu):=\left\{ f\in  L_q(\calx,\mu) \text{ s.t. } \int_{\calx}  f(\te)\mu(\diff\te)=0   \right\}.
\end{align}
Recall that a \emph{Markov transition kernel} is a function $p:\calx\times\mathcal{B}(\calx)\rightarrow[0,1]$ such that \begin{enumerate}
	\item For each $A \in \mathcal{B}(\calx)$, the mapping $ \calx\ni\te\to p(\te,A)$, is a $\mathcal{B}(\calx)$-measurable real-valued function. 
	\item For each $\te$ in $\calx$, the mapping $\mathcal{B}(\calx)\ni A\to p(\te,A)$, is a probability measure on $(\calx,\mathcal{B}(\calx))$. 	
\end{enumerate}
Given a Markov transition kernel $p$, we denote by $P$ its associated \emph{Markov transition operator} which acts to the  left on measures, $\mu\to\mu P\in\mathcal{M}(\calx),$ and to the right on functions, $f\to Pf,$ measurable on $(\calx,\mathcal{B}(\calx)),$ such that 
\begin{align}
(\mu P) (A)&=\int_\calx p(\te,A)\mu(\mathrm{d}\te), \quad  \forall A\in  \calb(\calx),\\
(Pf)(\te)&=\int_\calx f(\vartheta)p(\te,\mathrm{d}\vartheta), \quad \forall \te \in \calx .
\end{align}
We say that a Markov operator $P$ is $\mu$-\emph{invariant} if $\mu P=\mu$. We say that a Markov operator $P$ is $\mu$-\emph{reversible} if \begin{align}
\int_B p(\te,A)\mu(\diff \te)=\int_A p(\te,B)\mu(\diff \te), \quad A,B\in \mathcal{B}(\calx).
\end{align}
 It is a well-known fact that if $P$ is $\mu$-reversible, then $P$ is $\mu$-invariant. The reverse is in general not true. The main idea behind using Markov
chain Monte Carlo methods to sample a measure of interest $\mu$ on $(\calx,\mathcal{B}(\calx)),$ is to create a Markov chain with initial state $\te^0\sim \lambda^0$, for some $\lambda^0\in\mathcal{M}(\calx),$ using a $\mu$-invariant Markov transition operator $P$. The Markov chain $\{ \te^n \}_{n\in\mathbb{N}}$ is then generated by sampling $\te^n\sim p(\te^{n-1},\cdot), \forall n\in \mathbb{N}$. Under some suitable conditions, it can be shown that $\lambda^0P^n\to \mu$ as $n\to \infty$ where the convergence is with respect to a suitable distance between probability measures.

%
%

\subsection{Bayesian inverse problems and Markov chain Monte Carlo}\label{ss:bip}
 Let $(\X,\lno\cdot\rno_\X)$ and $(\mathsf{Y},\lno\cdot\rno_\mathsf{Y})$ be separable Banach spaces with associated Borel $\sigma$-algebras $\mathcal{B}(\X),\ \mathcal{B}(\mathsf{Y})$, and let us define the forward operator $\eff:\X\rightarrow \mathsf{Y}$.   In inverse problems, we use some data $\yy\in \mathsf{Y}$, usually polluted by some random noise $\varepsilon\sim\mu_\textrm{noise},\ \varepsilon\in \mathsf{Y}$, to determine a possible state $\te\in \X$ that may have generated the data. Assuming an additive noise model, the relationship between $\te$ and $y$ is given by:
\begin{align} \label{Eq:InvProb}
{y}={\eff}(\te)+{\varepsilon}, \quad \varepsilon\sim \mu_\textrm{noise}.
\end{align}
 Here, $\te$ can be a set of parameters of a possibly non-linear Partial Differential Equation (PDE) and the data correspond to some observable (output) quantities related to the solution of the PDE.  The forward operator $\eff$ describes then the parameter-to-outputs map and its evaluation at any $\te\in \Te$ implies the solution of a PDE. As such, it is, in general,  not possible to evaluate $\eff$ exactly and one has rather to use numerical methods. We denote by $\eff_\lev$ the numerical approximation of the forward map at an accuracy level $\lev$. In a Bayesian setting, we consider the parameter $\te$ to be uncertain and model it as a random variable  with a given \emph{prior}  probability measure $\mup$ on $(\X, \mathcal{B}(\X))$. Such a prior models the knowledge we have on the uncertainty in $\te$, before observing the data $y$. For simplicity, we will assume that $\mathsf{Y}=\R^n$ for some $n\geq1,$ and that $\mu_\textrm{noise}$ has a  density $\pi_\textrm{noise},$  with respect to the Lebesgue measure, and that such a measure is strictly positive and bounded, i.e., $\exists \mathsf{A}>0$ such that $0<\pi_\textrm{noise}(z)\leq \mathsf{A},$ $\forall z\in \mathsf{Y}$. Furthermore, if we  assume that the noise $\varepsilon$ and $\te$ are statistically independent (when seen as random variables on their respective spaces), then, we have that $\mathbb{P}(y-\eff_\lev(\te)\in \cdot |\te)=\mathbb{P}(\varepsilon\in \cdot)$, i.e., $y-\eff_\lev(\te)$ conditioned on $\te$ has the same distribution as $\varepsilon$. This allows us to  define the \emph{likelihood} function \begin{equation}\pi_\lev(\yy|\te):=\pi_\text{noise}(\yy-{\eff_\lev}(\te)),\end{equation}
and  write it in terms of a  \emph{potential} function $\Phi_\lev(\te;y):\X\times \mathsf{Y}\to [0,\infty)$:
\begin{align}\label{eq:potential}
\Phi_\lev(\te;y)=-\log \left[\pi_\textrm{noise}(y-\mathcal{F}_\lev(\te))/\mathsf{A}\right].
\end{align}
The scaling factor $\mathsf{A}^{-1}$ in the definition of the potential is introduced to make it non-negative.

The function $\Phi_\lev(\te;y)$ is a measure of the misfit between the recorded data $y$ and the predicted value (at accuracy level $\lev$) $\mathcal{F}_\lev(\te)$, and often depends only on $\lno y-\mathcal{F}_\lev(\te)\rno_\mathsf{Y}$. From Bayes theorem (see e.g., \cite[Section 10.2]{dudley2018real}), it follows that the Bayesian Inverse Problem (BIP) consists in determining (or approximating)  the conditional probability measure $\my_\lev=\mathbb{P}(\te\in \cdot | y)$ of $\te\in\Te$ given the observed data $y\in\Y$, which we write in terms of its Radon-Nikodym derivative with respect to $\mup$: \begin{align}\label{Eq:BT} \py_\lev(\te)&=\frac{\diff \my_\lev}{\diff \mup}=\frac{\pi_\text{noise}(\yy-{\eff}_\lev(\te))}{\int \pi_\textrm{noise}(y-\eff_\lev(\te))\mu_\mathrm{pr}(\diff \te)}=\frac{1}{Z_\lev}\exp\left(-\Phi_\lev(\te;y)\right),\\
\text{with }Z_\lev&:= \int_\X \exp(-\Phi_\lev(\te;y))\mup(\mathrm{d}\te).
\end{align}   Notice that $\mu^y_\lev$ (resp. $\pi^y_\lev$) are a numerical approximation of a usually unattainable posterior measure (resp. density) $\mu^y$ (resp. $\pi^y$).  The goal of Bayesian analysis is often to explore the posterior measure or compute posterior expectations of some $\my$-integrable output quantity of interest $\Q:\Te\to \R$ by drawing samples from the posterior measure.   A common method for performing such a task is to use Markov chains Monte Carlo (MCMC) methods, the most celebrated of which is, perhaps, the Metropolis-Hastings algorithm \cite{metropolis1953equation,hastings1970monte}, which we describe briefly. Let $Q:\Te\times \Te\to\R_+$ be a continuous and strictly positive function such that, for any $\te\in\Te$, $\int_\Te Q(\te,z)\mup(\diff z)=1$. Thus, for any fixed $\te\in \Te,$ $Q(\te,\cdot)$ induces a probability measure $\hat{Q}(\te,\cdot)$  having Radon-Nikodym derivative with respect to the prior given by $\frac{\diff \hat Q(\te,\cdot)}{\diff\mup}(z)=Q(\te,z).$   Notice that, since $Q(\te,z)$ is assumed to be strictly positive for any $\te,z\in \Te$, $\mup$ and $\hat Q$ are equivalent probability measures. Furthermore, define the following three probability measures on $(\Te\times\Te,\mathcal{B}(\Te\times\Te))$:
\begin{align}
\eta_\lev(\diff \te,\diff z)&:=\my_\lev(\diff\te)\hat Q(\te,\diff z),\\
\etat_\lev(\diff \te,\diff z)&:=\my_\lev(\diff z)\hat Q( z,\diff \te),\\
\eta_\mathrm{pr}(\diff \te,\diff z)&:=\mup(\diff\te)\mup(\diff z).
\end{align}
 It is known (see e.g., \cite{tierney1998note}) that whenever {$\etat_\lev$} is absolutely continuous  w.r.t $\eta_\lev$ ($\etat_\lev\ll\eta_\lev$), one can construct a well-defined Metropolis-Hastings algorithm targeting $\my_\lev$. Clearly, since both $\py_\lev$ and $Q$ are strictly positive,  both $\eta_\lev$ and $\etat_\lev$ are equivalent with respect to $\eta_\mathrm{pr}$, with Radon-Nikodym derivatives give by 
\begin{align}
\frac{\diff \eta_\lev}{\diff \eta_\mathrm{pr}}(\te,z)&=\frac{\diff \my_\lev}{\diff \mup}(\te) \frac{\diff \hat Q(\te,\cdot)}{\diff \mup}(z)=\py_\lev(\te)Q(\te,z),\\
\frac{\diff \etat_\lev}{\diff \eta_\mathrm{pr}}(\te,z)&=\frac{\diff \my_\lev}{\diff \mup}(z) \frac{\diff \hat Q( z,\cdot)}{\diff \mup}(\te)=\py_\lev(z)Q(z,\te).
\end{align} As such, one can define the  so-called (level $\lev$) Metropolis acceptance ratio
\begin{align}
\frac{\diff \etat_\lev}{\diff \eta_\lev}(\te,z)=\left(\frac{\diff \etat_\lev}{\diff \eta_\mathrm{pr}}\frac{\diff \eta_\mathrm{pr}}{\diff \eta_\lev}\right)(\te,z)=\frac{\py_\lev(z) Q(z,\te)}{\py_\lev(\te)Q(\te,z)}, \label{eq:rn_metro}
\end{align}
and, similarly, one can define the (level $\lev$) Metropolis acceptance probability $\alpha_\lev: \Te \times \Te \to [0,1]$  by 
\begin{align}
\alpha_\lev(\te,z)=\min \left\{1,\frac{\diff \etat_\lev}{\diff \eta_\lev}(\te,z) \right\}.
\end{align}
The Metropolis-Hastings algorithm constructs a Markov chain $\{\te^n\}_{n=1}^{N}$ by iteratively proposing at each step $n$ a candidate state $z\sim Q(\te^n,\cdot)$, and setting $\te^{n+1}=z$ with probability $\alpha_\lev(\te^n,z)$, and otherwise setting $\te^{n+1}=\te^n.$ This procedure produces a Markov chain $\{\te^n\}_{n=1}^{N}$ whose  samples are asymptotically distributed according to $\my_\lev$, provided the chain is ergodic. The  procedure is depicted in Algorithm \ref{algo:MH}. 

Once samples $\{\te^n\}_{n=1}^N$, drawn approximately from $\mu^y_\lev$ have been obtained by some MCMC algorithm, the posterior expectation $\E_{\mu^y}[\qoi]$  can be approximated by the following ergodic estimator 
\begin{align}\label{eq:mc_estimator}
\E_{\mu^y}[\Q]\approx \E_{\mu^y_\lev}[\Q_\lev]\approx \widehat{\Q}_\lev:=\frac{1}{N} \sum_{n=1}^N \Q_\lev(\te^{n}),
\end{align}
where $\Q_\lev:\Te\to\R$ is a $\mu^y_\lev$-integrable numerical approximation of $\Q$.

\begin{algorithm}[H]
	\caption{Metropolis-Hastings}\label{algo:MH}
	\begin{algorithmic}[1]
		\Procedure{Metropolis-Hastings}{$\pi^y_\lev,Q$,$ N$,$\lambda^0$}.
		\State Sample $\te^0\sim\lambda^0$
		\For{$n=0,\dots,N-1$}
		\State Sample ${z}\sim  Q(\te^n,\cdot) $.
		\State Set $\te^{n+1}=z$ with acceptance probability 
		$$\alpha_\lev(\te^n,z)=\min \left[1,\frac{\pi^y_\lev(z)Q(z,\te^n)}{\pi^y_\lev(\te^n)Q(\te^n,z)}\right].$$
		\State Set $\te^{n+1}=\te^n$ otherwise.
		\EndFor

		\State Output $\{\te^n\}_{n=0}^{N}$ 
		\EndProcedure
	\end{algorithmic}
\end{algorithm}

The Metropolis-Hastings algorithm induces the following Markov transition kernel $p:\X\times\mathcal{B}(\X)\to[0,1]$ 
\begin{align}
p(\te,A)= \int_A\alpha_\lev(\te,z) Q(\te,z)\mup(\mathrm{d}z) + \car{\te\in A}\int_\X(1-\alpha_\lev(\te,z)) Q(\te,z)\mup(\mathrm{d}z),
\end{align}
for every  $\te\in \X$ and $A\in \mathcal{B}(\X)$. Here, $\car{\te\in A}$ is the characteristic function of the set $A$, i.e., $\car{\te\in A}=1$ if $\te\in A$ and $\car{\te\in A}=0$ otherwise. Thus, such an algorithm can be understood as iteratively applying the Markov transition operator $P$ associated to $p$ to the initial probability measure $\lambda^0$.

\section{Multi-level Markov Chain Monte Carlo}\label{sec:ml_mcmc}
In general, the Metropolis-Hastings algorithm  requires a large number of samples to converge \textemdash it is not uncommon for $N$ to be of the order of tens of thousands.  Furthermore, as it can be seen from Algorithm \ref{algo:MH}, every time $\alpha_\lev(\te^n,z)$ is evaluated, one needs to evaluate the posterior density at each new proposed state.   In PDE-based inverse problems, where evaluating $\py_\lev(z)$ implies solving a possibly non-linear and time-dependent PDE on a sufficiently fine mesh (i.e., with high accuracy), the cost associated to the MH algorithm can rapidly become prohibitive.  One way  to alleviate this issue is by introducing multi-level techniques. To that end, let $\{M_\ell\}_{\ell=0}^\lev$
be a hierarchy of discretization parameters of the underlying mathematical model $\eff(\cdot)$ in \eqref{Eq:InvProb}, which could, for instance, represent the number of degrees of freedom used in the discretization of the underlying PDE. In what follows, we consider only geometric sequences for $\{M_\ell\}_{\ell=0}^\lev$ with $M_\ell=sM_{\ell-1}$ for some $M_0>0$ and $s>1$. We denote the corresponding discretized forward models by $\eff_\ell(\cdot)$ and the corresponding approximate quantity of interest by $\Q_\ell$. We assume that the accuracy of the discretization, as well as the cost of evaluating the discretized model, increases as $\ell$ (and hence $M_\ell$) increases.  This hierarchy of discretizations, in turn, induces a hierarchy of  posterior probability measures $\{\mu^y_\ell\}_{\ell=0}^\lev$ approximating $\mu^y$ with increasing accuracy and cost. Notice that we can write the posterior expectation $\E_{\mu^y}[\Q],$ approximated on the finest available discretization level $\lev,$  in terms of the following telescoping sum: 
\begin{align}
\E_{\mu^y}[\Q]&\approx \E_{\mu^y_\lev}[\Q_\lev]=\E_{\mu^y_0}[\Q_0]+\sum_{\ell=1}^\lev\left(\E_{\mu^y_\ell}[\Q_\ell]-\E_{\mu^y_{\ell-1}}[\Q_{\ell-1}]\right).
\end{align}
This motivates introducing the following MLMCMC version of the ergodic estimator  \eqref{eq:mc_estimator}
\begin{align}
\widehat \Q_{\lev,\{N_\ell\}_{\ell=0}^\lev}:= \frac{1}{N_0}\sum_{n=0}^{N_0}[\Q_0(\te_{0,0}^n)]+\sum_{\ell=1}^\lev\frac{1}{N_\ell}\sum_{n=0}^{N_\ell}\underbrace{\left(\Q_\ell(\te^n_{\ell,\ell})-\Q_{\ell-1}(\te^n_{\ell,\ell-1})\right)}_\text{$:=Y_\ell^n$}.\label{eq:ML-MLMC_estimator}
\end{align}
 Here we have introduced a notation where, for every $\te_{\ell,j}\in \X$, the first sub-index $\ell$, $\ell=1,\dots,\lev$, represents the current level in the telescoping sum (\ref{eq:ML-MLMC_estimator}), whilst the second sub-index $j=\ell-1,\ell,$ represents the discretization  level of the posterior measure associated to $\te_{\ell,j}$, i.e., $\te_{\ell,j}\sim \mu^y_{j}$ asymptotically. Notice that the terms $Y^n_\ell$ are small, in general,  if $(\te^n_{\ell,\ell-1},\te^n_{\ell,\ell})$ are \emph{close}.  The key of the method is then to design a coupled Markov chain $\{\te^n_{\ell,\ell-1},\tel^n\}$ for which $\tell^n,\tel^n$ stay highly correlated and  close to each other for every $n$, while, at the same time, keeping the right marginal invariant distributions $\myll,\myl$, respectively. This is necessary for the terms in \eqref{eq:ML-MLMC_estimator} to telescope in the mean. 
 Constructing a coupled Markov chain (with marginal target measures $\myll,\myl$) for which $(\te_{\ell,\ell-1}^n,\te^n_{\ell,\ell})\to 0$ in a suitable sense, as $\ell\to\infty$, will in turn result in $\V_{\nul}[Y_\ell]\to 0$ as $\ell\to \infty$, where $\nul\in\mathcal{M}(\Te^2)$ is the invariant measure of the coupled Markov chain (provided it exists).   Hence, by using an adequate proposal distribution and  properly choosing $\lev$ and $\{N_{\ell}\}_{\ell=0}^{\lev}$ one can obtain a significantly better complexity than that of a single-level MCMC estimator ( see \cite{dodwell2015hierarchical} for a general complexity result of the ML-MCMC approach).  To achieve this, following \cite{dodwell2015hierarchical}, we will use what we call an \emph{Independent Metropolis-Hastings coupling} (IMH-coupling) of $\tell,\tel$.   The main idea of such a  coupling is to create two simultaneous Markov Chains $\{\te^n_{\ell,\ell-1},\te^{n}_{\ell,\ell}\}_{n\in\mathbb{N}}$  at two adjacent discretization levels, using as a proposal a probability measure $\tilde Q_\ell$ (where $\my_j\ll \tilde Q_\ell$  $j=\ell-1,\ell$), having a (strictly positive) $\mup$-density $Q_\ell$, in such a way that (i) $\tilde Q_\ell$ generates proposed states $z\in \X$ independently of the current state of either chain,  and (ii)  at every iteration, the same candidate state $z$ is proposed as the new state of both chains, which then  accept or reject it using the standard MH accept-reject step with the same uniform random variable $u\sim \mathcal{U}(0,1)$. This will in turn guarantee that, marginally $\te_{\ell,j}\sim\mu^y_j$, asymptotically for both $j=\ell-1,\ell$ (that is, the marginal chains follow the right distribution), and that the pair $(\tell^n,\tel^n)$  is highly correlated for any ${n\in\mathbb{N}}$, provided the acceptance rate is sufficiently high.  A depiction of one step of  such a  coupling procedure is shown in Algorithm \ref{algo:2-IS_coupling}.  The full multi-level MCMC procedure is presented in Algorithm \ref{algo:MLMH}. Notice that at each level $\ell=1,2,\dots,\lev$, the coupled chains $\{\tell^n,\tel^n\}_{n=0}^{N_\ell}$ in Algorithm \ref{algo:MLMH} start from the same state $\tell^0=\tel^0$ (the \textit{diagonal} of the set $\X^2$).
\begin{algorithm}[]
	\caption{one-step IMH coupling}\label{algo:2-IS_coupling}
	\begin{algorithmic}[1]
		\Procedure{IMH\_Coupling}{$\{\pi^y_{\ell-1},\pi^y_{\ell}\},\{\tell^n,\tel^n\}, Q_\ell$}
		\State Sample ${z}\sim   Q_{\ell} .$
		\State Sample $u\sim\mathcal{U}(0,1).$
		\For {$j=\ell-1,\ell$}
		\State Set $\theta_{\ell,j}^{n+1}=z$ if $u<\alpha_j(\theta_{\ell,j}^n,z)$, where
		$$\alpha_j(\theta_{\ell,j}^n,z):=\min \left[1,\frac{\pi^y_j(z) Q_\ell(\theta_{\ell,j}^n)}{\pi^y_j(\theta_{\ell,j}^n) Q_\ell(z)}\right].$$
		\State Set $\theta_{\ell,j}^{n+1}=\theta_{\ell,j}^n$ otherwise.
		\EndFor
		\State Output $\{\te^{n+1}_{\ell,\ell-1}, \te^{n+1}_{\ell,\ell} \}.$
		\EndProcedure
	\end{algorithmic}
\end{algorithm}

\begin{algorithm}[]
	\caption{Multi-level MCMC}\label{algo:MLMH}
	\begin{algorithmic}[1]
		\Procedure{ML-MCMC}{$\{\pi^y_\ell\}_{\ell=0}^\lev,  Q,\{N_\ell\}_{\ell=0}^\lev,\lambda^0$}
		\If {$\ell=0$}
		\State $\{\te^n_{0,0}\}_{n=0}^{N_0}=$\texttt{Metropolis-Hastings}($\pi^y_0, Q,N_0,\lambda^0$)
		\State Set $\chi_{0,0}=\{\te_{0,0}\}_{n=0}^{N_0}$.
		\EndIf
		\For{$\ell=1,\dots,\lev$}
		\State ``Construct'' $Q_\ell$.
		\State Sample $\tell^0\sim \lambda^0$, and set $\tel^{0}=\tell^{0}$ (e.g., from $\chi_{\ell-1,\ell-1}$.)
		\For{$n=0,\dots,N_\ell-1$}
		\State \color{gray}\# Create a coupled chain using IMH coupling \color{black}
		\State $\{\tell^{n+1},\tel^{n+1} \}=\text{\texttt{IMH\_Coupling}}(\{\pi^y_{\ell-1},\pi^y_{\ell}\},\{\tell^n,\tel^n\}, Q_\ell)$
		\EndFor
		\State Set $\chi_{\ell,j}=\{\te^n_{\ell,j}\}_{n=0}^{N_\ell}, $ $j=\ell-1,\ell$.
		\EndFor
		\State Output $\chi_{0,0}\cup\{ \chi_{\ell,\ell-1},\chi_{\ell,\ell}  \}_{\ell=1}^{\lev}$ and $\mlQ$.
		\EndProcedure
	\end{algorithmic}
\end{algorithm}
Algorithm \ref{algo:2-IS_coupling} is, effectively, a type of independent sampler Metropolis \cite{asmussen2007stochastic} on the marginal chains. As such, the sampling efficiency of such an algorithm will critically depend on how well the proposal $\tilde Q_\ell$  \textit{approximates} $\mu^y_\ell$ and $\myll$; choosing a proposal $\tilde Q_\ell$ that closely resembles $\mu^y_\ell$ or $\mu^y_{\ell-1}$ will reduce the amount of rejection steps, hence enhancing the mixing of the chains (see \cite{asmussen2007stochastic,brooks2011handbook} for a more in-depth discussion on this). In principle, $\tilde Q_\ell$ can be chosen to be, e.g., the prior, an empirical version of the posterior based on the sample $\{\te^n_{\ell-1,\ell-1}\}_{n=0}^{N_\ell}$ collected at the previous level, as originally proposed in \cite{dodwell2015hierarchical}, or any reasonable  approximation of $\mu^y_\ell, \ \mu^y_{\ell-1}$ such as, e.g., a Laplace approximation or a kernel density estimator (KDE), again based on the sample $\{\te^n_{\ell-1,\ell-1}\}_{n=0}^{N_\ell}$ collected at the previous level. 

  Notice that each step of Algorithm \ref{algo:2-IS_coupling} produces one out of 4 possible configurations $S_1,S_2,S_3,S_4,$ namely;\begin{align}
&S_1:(\tell^{n+1},\tel^{n+1})=(z,z) \quad&\text{(both chains accept the proposed state)},\\
&S_2:(\tell^{n+1},\tel^{n+1})=(z,\teln) \quad&\text{(chain al level $\ell-1$ accepts and chain at level $\ell$ rejects)},\\
&S_3:(\tell^{n+1},\tel^{n+1})=(\telln,z) \quad&\text{(chain al level $\ell-1$ rejects and chain at level $\ell$ accepts)},\\
&S_4:(\tell^{n+1},\tel^{n+1})=(\telln,\teln) \quad&\text{(both chains reject the proposed state)}.
\end{align} 

 This is illustrated in Figure \ref{fig:drawing}. More formally, set $\Te^2\ni\tebln:=(\tell^n,\tel^n).$
Then, for any $A\in\calb(\Te^2)$, Algorithm \ref{algo:2-IS_coupling} induces the \emph{multilevel Markov transition kernel}  $\pisl:\Te^2\times\calb(\Te^2)\mapsto[0,1]$ given by 
\begin{align}\label{eq:joint_kernel}
&\pisl(\tebln,A):=\int_{\Te}\min\{\alpha_{\ell-1}(\telln,z),\alpha_\ell(\teln,z) \}\ql(z)\car{(z,z)\in A}\mup(\diff z)\\
&+\int_\Te (\alpha_{\ell-1}(\telln,z)-\alpha_{\ell}(\teln,z))^+\ql(z)\car{(z,\teln)\in A}\mup(\diff z)\\
&+\int_\Te (\alpha_\ell(\teln,z)-\alpha_{\ell-1}(\telln,z))^+\ql(z)\car{(\telln,z)\in A}\mup(\diff z) \\
&+\car{(\telln,\teln)\in A}\left(1-\int_\Te \max\{\alpha_{\ell-1}(\telln,z),\alpha_\ell(\teln,z) \}\ql(z)\mup(\diff z)\right),
\end{align}
 where $(x)^+:=\frac{x+|x|}{2},\ x \in \R.$ Notice that each line on the right hand side of \eqref{eq:joint_kernel} corresponds to the transition kernel proposing to move from the state $\tebln$ to one of the 4 possible configurations $S_i,\  i=1,2,3,4$.   Although it is clear that $\pisl$ targets the right marginals, the properties related to the convergence of the chain generated by $\pisl$, such as irreducibility, existence of an invariant (joint) measure $\nul$, or geometric ergodicity,  are not obvious. We will investigate these convergence properties on the following section. 
\begin{figure}[htpb]
	\centering
	\includegraphics[width=0.5\linewidth]{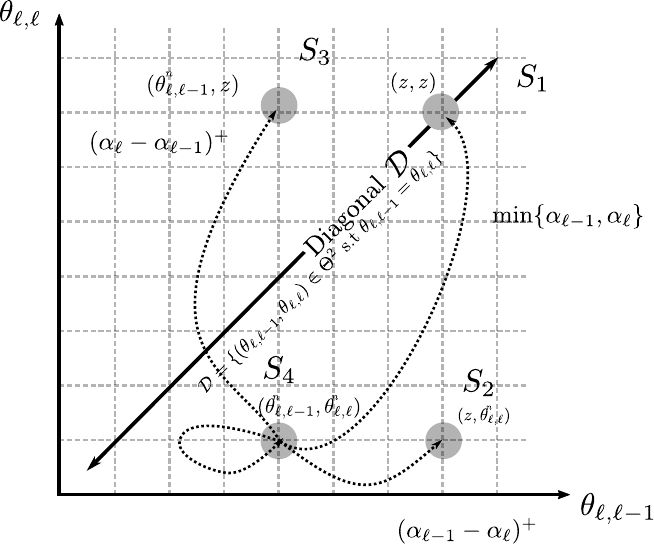}
	\caption{Schematic of the possible configurations $S_1,S_2,S_3,S_4$. Note that the sampler moves to the diagonal $\Delta=\{(\tell,\tel)\in\Te^2\text{ s.t }\tell=\tel\}$ whenever both chains accept (regardless of their current state) or when both chains reject, assuming that they were at the diagonal. }
	\label{fig:drawing}
\end{figure}

\section{Convergence analysis of the ML-MCMC algorithm}\label{sec:analysis}

We  now proceed to analyze the convergence of the level-wise coupled chains generated by  Algorithm \ref{algo:2-IS_coupling}. The main result of this section is stated in Theorem \ref{thm:main_existence_ergo}. Loosely speaking, this theorem (i) gives conditions for  the existence and uniqueness of a joint invariant measure of the multi-level Markov transition kernel \eqref{eq:joint_kernel}, and (ii) shows that, under certain conditions, such a kernel generates a uniformly ergodic chain, i.e., a chain that converges exponentially fast to its invariant distribution with a constant that does not depend on the initial state of the chain.

 Notice that, at each level $\ell$,  Algorithm \ref{algo:2-IS_coupling} creates two coupled chains using the same proposal $Q_\ell$. This in turn induces two Markov transition kernels, each generating a marginal chain. We formalize this in the following definition. 

\begin{definition}[Marginal kernel]
For a given level $\ell$, $\ell=1,2\dots,\lev$ and proposal $Q_\ell$, we define the $\my_j$-invariant \emph{Marginal Markov Transition Kernel} $p_{\ell,j}:\Te\times\mathcal{B}(\Te)\rightarrow[0,1],$ with $j=\ell-1,\ell$,  as 
\begin{align}\label{eq:marginal_kernel}
p_{\ell,j}(\te_{\ell,j},A)&:= \int_A\alpha_j(\telj,z)Q_\ell(z)\mup(\mathrm{d}z) \\&
+ \car{\te_{\ell,j}\in
	 A}\int_\X\left(1-\alpha_j(\telj,z)\right)Q_\ell(z)\mup(\mathrm{d}z),
\end{align}
for any $\te_{\ell,j}\in \Te$, and $A\in \mathcal{B}(\Te).$ Similarly, we denote by $P_{\ell,j}$ its corresponding \emph{marginal Markov Transition Operator}.  
\end{definition}

Clearly, the marginal chain $\{\te^n_{\ell,j}\}_{n=0}^{N_\ell},\ j=\ell-1,\ell$ generated by \eqref{eq:marginal_kernel} is indeed a Markov chain. Furthermore, notice that, by construction, $\Plj$ is $\mu^y_j$-invariant, i.e., $\mu^y_j \Plj=\mu^y_j$.

We make the following  assumptions on the proposal and the (marginal) posterior densities.
\begin{assumption}[Assumptions on positivity and continuity]\label{Ass:positivity}
	The following  conditions hold for all $\ell=1,\dots,\lev$:
	\begin{enumerate}[label=\emph{{1.\arabic*}}.,ref=1.\arabic*]
		\item\label{Ass:prop_is_positive} The proposal density is continuous and positive, i.e., $\ql(z)>0,\ \forall z\in\Te$.
		\item\label{Ass:post_is_positive} The posterior density is continuous and positive, i.e.,  $\pyl(z)>0,\ \forall z\in\Te$.
		\item \label{Ass:compact_level_sets} For any $\ell=1,2,\dots,\lev$, $j=\ell-1,\ell$, the quotient $Q_\ell(\te)/\py_j(\te)$ has compact level sets, i.e., for any $c_r>0,$ the set $A\in\calb(\X)$ defined as $A:=\left\{\te\in \Te: \frac{Q_\ell(\te)}{\py_j(\te)}\leq c_r\right\}$ is compact.  
		\item \label{Ass:essinf} There exists a positive constant $c\in(0,1),$ independent of $\ell$, such that  $$\mathrm{ess}\inf_{z\in\Te} \left\{\ql(z)/\py_j(z)\right\}\geq c>0,\quad j=\ell-1,\ell.$$
		\item \label{Ass:p_moments} There exist positive constants $r>1,$ and $C_r$, independent of $\ell$, such that $\int_\Te Q_\ell^r(\te)\mup(\diff\te)\leq C_r,$ for any $\ell$.
	\end{enumerate}
\end{assumption}

We remark that Assumptions \ref{Ass:prop_is_positive} and \ref{Ass:post_is_positive} are relatively mild and straightforward to satisfy. In particular, under the assumption that $\Phi_\ell(\te;y)$ is positive everywhere, \ref{Ass:post_is_positive} is ``inherited'' from the forward operator $\eff$ and $\mu_\mathrm{noise}$, while \ref{Ass:prop_is_positive} is user defined. 
Assumption \ref{Ass:compact_level_sets} is common in the study of converge of Markov chains when using coupling arguments (see, e.g., \cite{roberts2004general}). It is satisfied, for instance, if $\Te$ is compact or if it is finite-dimensional and $\lim_{|\te|\to\infty} \ql(\te)/\pyl(\te)=+\infty$.  Assumption \ref{Ass:essinf} implies, in particular, that the tails of the proposal $\ql$ must decay more slowly than those of $\myl$, $\myll$ at infinity, i.e., $Q_\ell$ has heavier tails than $\my_j,\ j=\ell-1,\ell$. In practice, this is indeed a more restrictive assumption than \ref{Ass:prop_is_positive} and \ref{Ass:post_is_positive}. It is, however, crucial for the convergence of both the marginal IMH and the multi-level algorithm. Lastly, Assumption \ref{Ass:p_moments} is an integrability condition on $Q_\ell$ with respect to the prior. This assumption is quite mild and will become useful in the next section (c.f. Lemma \ref{lemma:prob_bnd}).

\begin{remark}
It is a consequence of Assumptions \ref{Ass:prop_is_positive}-\ref{Ass:compact_level_sets} that $\mathrm{ess}\inf_{z\in\Te} \left\{\ql(z)/\py_j(z)\right\}\geq c_\ell>0$ with a possibly level-dependent constant. Assumption \ref{Ass:essinf} requires that such  constants $c_\ell$ are uniformly lower-bounded by a strictly positive constant $c$ when $\ell\to\infty$. 
\end{remark}

\subsection{Convergence of the level-wise coupled chain}

We say that a Markov kernel $p:\calx\times\mathcal{B}(\calx)\rightarrow[0,1]$ is \emph{$\psi$-irreducible} if there exists a non-zero, $\sigma$-finite measure $\psi$ on $(\calx,\calb(\calx))$ such that for all measurable sets $A\in\calb(\calx)$ with $\psi(A)>0$, and for all $\te\in\calx,$ there exists a positive integer $n$, possibly depending on $\te$ and $A,$ such that $p^n(\te,A)>0$. We say that a set $S\in\calb(\calx)$ is \emph{small} (or, more precisely, \emph{$\nu_m$-small}) if there exists $m\in\mathbb{N}$ and a non-trivial, positive measure $\nu$ on $ (\Te,\mathcal{B}(\calx))$ such that \begin{align}
p^m(\te,A)\geq\nu(A), \quad \forall \te\in S,\ A\in\calb({\calx}).
\end{align}

In most Markov chain Monte Carlo methods, one typically designs a Markov chain with a given invariant probability measure. This automatically ensures the existence of (at least) one invariant probability measure. However, this is unfortunately not the case for Multi-level MCMC algorithms (including the one presented here), and as such, a significant part of the paper will be devoted to show that such an invariant measure uniquely exists. We recall here a standard result in the theory of Markov chains (see e.g., \cite[Theorem 15.0.1]{meyn2012markov}) which guarantees the existence of a unique invariant probability measure, and which Theorem \ref{thm:main_existence_ergo} relies upon. 
\begin{theorem}[Existence and uniqueness of an invariant probability measure]\label{thm:existence_uniquenes_bax}
	Consider a (discrete-time) Markov chain on a general state space $(\calx,\mathcal{B}(\calx))$ with a Markov transition kernel $p:\calx\times \calb(\calx)\to[0,1].$ Suppose the following conditions hold: 
	\begin{enumerate}[label=\emph{{A\arabic*}}.,ref=A\arabic*]
		\item\label{Ass:Small_exs} The chain is $\psi$-irreducible and aperiodic.
		\item\label{Ass:Strong_drift_exs} There exists a measurable function $V:\calx\to[1,\infty)$, called \emph{Lyapunov function}, a small set $S\in\mathcal{B}(\calx),$ and  positive constants $\lambda<1$, $\kappa<\infty$ satisfying
		\begin{align}
		(PV)(\te)\leq\lambda V(\te)+\kappa \emph{\one}_{\{\te\in S\}}. \label{eq:drift_cond}
		\end{align}
	\end{enumerate}
	Then, $p$ has a unique invariant probability measure $\mu\in\mathcal{M}(\calx,\mathcal{B}(\calx))$. In addition,  there exist positive constants $M<\infty$ and $r\in(0,1)$ such that  
	\begin{align}
	\underset{|f|\leq V}{\sup}\left| (P^nf)(\te)-\int_{\calx}f(\te)\mu(\diff \te)\right|\leq MV(\te)r^n,  \quad \forall \te\in	\calx,\ n\in\mathbb{N}, \label{eq:v_geom_erg}
	\end{align}
	where the supremum is taken over all $\mu$-integrable functions $f$ satisfying $|f|\leq V$.
\end{theorem}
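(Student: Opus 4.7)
The plan is to recognize this statement as essentially Theorem 15.0.1 of \cite{meyn2012markov}, so the cleanest route is a direct citation. However, if I were to reconstruct the argument, I would follow the classical Meyn--Tweedie pipeline in which \emph{minorization + geometric drift $\Rightarrow$ $V$-geometric ergodicity}. The argument has three logical blocks, and I would tackle them in order.

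First, I would use \ref{Ass:Small_exs} together with the existence of the small set $S$ from \ref{Ass:Strong_drift_exs} to establish a Nummelin splitting. By smallness there exist $m\in\mathbb{N}$ and a non-trivial measure $\nu$ with $p^m(\theta,\cdot)\geq \nu(\cdot)$ for all $\theta\in S$; writing $\nu=\nu(\Te)\,\tilde\nu$ and $p^m(\theta,\cdot)=\nu(\Te)\tilde\nu(\cdot)+(1-\nu(\Te))R(\theta,\cdot)$ for $\theta\in S$ produces a residual kernel $R$ and lets me embed the chain into an enlarged space where a genuine regeneration atom $\alpha^*$ is visited every time the chain lies in $S$ and the minorization ``succeeds.'' Aperiodicity plus $\psi$-irreducibility guarantee that $\alpha^*$ is visited infinitely often from any starting point.

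Second, I would exploit the drift inequality \eqref{eq:drift_cond} to control the return time $\tau_S$ to the small set, and hence to $\alpha^*$. Iterating \eqref{eq:drift_cond} gives
\begin{align}
(P^n V)(\theta)\leq \lambda^n V(\theta)+\frac{\kappa}{1-\lambda},
\end{align}
and a standard Foster--Lyapunov argument then shows that $\mathbb{E}_\theta[\beta^{\tau_S}]\leq C\, V(\theta)$ for some $\beta>1$ close to $\lambda^{-1}$. In particular, return times have geometric tails uniformly on level sets of $V$. Positive Harris recurrence on the split chain follows, which yields the existence of a unique invariant probability measure $\mu$ with $\int V\, d\mu<\infty$.

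Third, for the quantitative estimate \eqref{eq:v_geom_erg}, I would combine the minorization and the drift via a coupling argument à la Rosenthal/Roberts--Rosenthal. Two copies of the chain started from $\theta,\theta'$ each return to $S$ with geometric tails (thanks to the drift), and on each simultaneous visit to $S$ a successful minorization event couples them with probability $\nu(\Te)$; an explicit geometric rate $r\in(0,1)$ and constant $M$ depending only on $\lambda,\kappa,m,\nu(\Te)$ then emerges. Uniqueness falls out because any invariant probability measure must be absolutely continuous with respect to the maximal irreducibility measure and must coincide with the limit of $P^n(\theta,\cdot)$ given by \eqref{eq:v_geom_erg}. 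The main technical obstacle, and the reason the full proof occupies an entire chapter of \cite{meyn2012markov}, is book-keeping the interaction between the drift constants $(\lambda,\kappa)$ and the minorization constants $(m,\nu(\Te))$ so that one extracts a single clean bound of the form $M V(\theta) r^n$ uniformly over all $f$ with $|f|\leq V$; everything else is standard renewal theory on the split chain.
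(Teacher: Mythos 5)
Your proposal is correct and takes essentially the same route as the paper: the paper does not prove this statement but simply cites it as \cite[Theorem 15.0.1]{meyn2012markov}, which is exactly the citation you lead with. Your accompanying sketch of the Nummelin splitting, the Foster--Lyapunov control of return times, and the coupling argument for the geometric rate is an accurate summary of the standard Meyn--Tweedie proof, so nothing is missing.
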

Markov chains that satisfy \eqref{eq:v_geom_erg} are called \emph{geometrically ergodic}. Markov chains for which the bound on the right hand side of \eqref{eq:v_geom_erg} does not depend on $\te$ are called \emph{uniformly ergodic}.   The following result shows the reverse implication
\begin{theorem}[Bounded Lyapunov function, \cite{meyn2012markov}] \label{thm:bdd_lyap}
	A  Markov chain on a general state space $\Te$ is uniformly ergodic if and only if it satisfies a drift condition of the form \eqref{eq:drift_cond} with a bounded Lyapunov function $V:\Te\to[1,V_\mathrm{max}], \ V_\mathrm{max}\in\R_+$, and a small set $S_\ell$. 
	\begin{proof}
		See \cite[Theorem 16.0.2, implication viii]{meyn2012markov}. 
	\end{proof}
\end{theorem}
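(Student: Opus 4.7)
The plan is to prove the two implications separately, leveraging Theorem \ref{thm:existence_uniquenes_bax} for the forward direction.

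For the \emph{if} direction (drift with bounded $V$ implies uniform ergodicity), the hypotheses of Theorem \ref{thm:existence_uniquenes_bax} are met, so \eqref{eq:v_geom_erg} holds for an invariant measure $\mu$ and some $M<\infty$, $r\in(0,1)$. Because $V\leq V_\mathrm{max}$ pointwise, the right-hand side $MV(\te)r^n$ is uniformly bounded by $MV_\mathrm{max}r^n$. Moreover, $V\geq 1$ implies that every $f$ with $|f|\leq 1$ satisfies $|f|\leq V$ and is therefore admissible in the supremum on the left-hand side of \eqref{eq:v_geom_erg}, which in turn upper-bounds the total-variation distance $\|P^n(\te,\cdot)-\mu\|_\mathrm{TV}$. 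Combining these two observations yields
\[
\sup_{\te\in\Te}\|P^n(\te,\cdot)-\mu\|_\mathrm{TV}\leq MV_\mathrm{max}r^n,
\]
which is exactly the definition of uniform ergodicity.

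For the \emph{only if} direction, suppose the chain is uniformly ergodic with invariant measure $\mu$ and $\sup_{\te\in\Te}\|P^n(\te,\cdot)-\mu\|_\mathrm{TV}\leq \tilde M\tilde r^n$ for some $\tilde M>0$ and $\tilde r\in(0,1)$. The strategy is to show that the whole state space $\Te$ itself is small for some iterate $n_0$, after which the drift condition trivializes. Choose $n_0$ large enough that $\tilde M\tilde r^{n_0}<1$; a Doeblin-type argument then produces a constant $\tilde\epsilon\in(0,1)$ and a probability measure $\nu\in\mathcal{M}(\Te)$ such that
\[
P^{n_0}(\te,A)\geq \tilde\epsilon\,\nu(A),\quad \forall\,\te\in\Te,\ A\in\calb(\Te),
\]
so that $\Te$ itself is $(\tilde\epsilon\nu)$-small in the sense defined in the paper. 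Then the constant Lyapunov function $V\equiv 1$ (so $V_\mathrm{max}=1$), the small set $S=\Te$, any $\lambda\in(0,1)$, and any $\kappa\geq 1-\lambda$ make the drift inequality $(PV)(\te)=1\leq \lambda V(\te)+\kappa\car{\te\in S}=\lambda+\kappa$ hold pointwise, so the two-sided equivalence is established.

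The principal technical obstacle lies in the \emph{only if} direction: the pointwise Hahn decomposition of the signed measure $P^{n_0}(\te,\cdot)-\mu$ only yields the weaker bound $P^{n_0}(\te,A)\geq \mu(A)-\tilde M\tilde r^{n_0}$, which is not of the required minorizing form. Extracting a single, $\te$-independent probability measure $\nu$ and a positive constant $\tilde\epsilon$ satisfying the uniform minorization demands a careful analysis of the common part of $P^{n_0}(\te,\cdot)$ and $\mu$; this step is standard but non-trivial, and is essentially the content of the cited result \cite[Thm.~16.0.2]{meyn2012markov}, on which the proof ultimately leans.
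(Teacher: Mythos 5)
The paper offers no proof of this statement at all---its ``proof'' is the citation to Meyn and Tweedie---so your proposal does strictly more work than the source. Your \emph{if} direction is essentially correct: once Theorem \ref{thm:existence_uniquenes_bax} applies, boundedness of $V$ turns the $\te$-dependent bound $MV(\te)r^n$ into the uniform bound $MV_{\mathrm{max}}r^n$, and since $V\geq 1$ the class $\{|f|\leq 1\}$ sits inside $\{|f|\leq V\}$, so the total-variation distance is controlled uniformly in $\te$, matching the paper's definition of uniform ergodicity. The one point to flag is that Theorem \ref{thm:existence_uniquenes_bax} also requires $\psi$-irreducibility and aperiodicity (its hypothesis \ref{Ass:Small_exs}), which are not listed among the hypotheses of the present theorem; they are standing assumptions in the chapter of Meyn--Tweedie being cited, so you should say explicitly that you inherit them rather than asserting that ``the hypotheses of Theorem \ref{thm:existence_uniquenes_bax} are met'' on the strength of the drift condition alone.

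Your \emph{only if} direction is the right reduction---uniform ergodicity implies the whole space is small, after which $V\equiv 1$ and $S=\Te$ make the drift inequality vacuous---but, as you yourself concede, the pivotal step of extracting a single $\te$-independent minorizing measure $\nu$ and constant $\tilde\epsilon>0$ is precisely the nontrivial content of \cite[Theorem 16.0.2]{meyn2012markov}. What the uniform bound $\sup_{\te}\Vert P^{n_0}(\te,\cdot)-\mu\Vert_{\mathrm{TV}}\leq \tilde M\tilde r^{n_0}<1/2$ gives you cheaply is Doeblin's condition (e.g.\ $\mu(A)>3/4$ forces $\inf_{\te}P^{n_0}(\te,A)>1/4$); upgrading Doeblin's condition to a genuine minorization $P^{m}(\te,\cdot)\geq\tilde\epsilon\,\nu(\cdot)$ requires a further argument, and your write-up leaves that as a pointer to the reference. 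The converse half is therefore a correct reduction rather than a complete proof---which, to be fair, puts it on the same footing as the paper itself.
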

 Thus, from Theorem \ref{thm:bdd_lyap}, we have that for each level $\ell=0,1,\dots,\lev,$ the marginal kernels $\plj$, satisfy a drift condition of the form \eqref{eq:drift_cond} with  a bounded Lyapunov function $V_j$ and a small set $S_j$, $j=\ell-1,\ell$. Chains that are geometrically ergodic and reversible have a positive $L_2(\calx,\mu)$-spectral gap \cite{roberts1997geometric} (c.f. Equation \eqref{eq:spec_gap}).

Lastly, we recall a result regarding the convergence of probability measures on general Banach spaces. From the definition of Markov transition kernel, we know that the Markov transition kernel $p(\cdot,\cdot)$  induces a probability measure $p(\te,\cdot)$ on $(\Te,\mathcal{B}(\Te))$ for any $\te\in \Te$. We say that such a measure $p(\te,\cdot)$ is  \emph{tight} if for each $\epsilon >0$ there exists a compact set $K\in \mathcal{B}(\Te)$ such that $p(\te,K)>1-\epsilon.$  It is known \cite{billing} that a $p(\te,\cdot)$ is tight if and only if for each $A\in \mathcal{B}(\Te)$, $p(\te,A)$ is the supremum of $p(\te,K)$ over the compact subsets $K$ of $A$. In addition, we have the following result, which will become useful for the proof of Theorems \ref{thm:main_existence_ergo} and \ref{Thm:abs_cont}.

\begin{theorem}\label{thm:billingsley}
	If $\Te$ is a separable Banach space, then, every probability measure on $(\Te,\mathcal{B}(\Te))$ is tight. 
	\begin{proof}
		See \cite[Theorem 1.3]{billing}
	\end{proof}
\end{theorem}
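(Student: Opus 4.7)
The plan is to prove this as a direct instance of Ulam's tightness theorem, which asserts that every Borel probability measure on a Polish space (complete separable metric space) is tight. Since a separable Banach space is complete in its norm-induced metric and separable by assumption, it is automatically Polish, so the result will follow from the general Polish-space argument. The construction is entirely metric, so the linear structure of $\Te$ will not enter in an essential way beyond furnishing a complete separable metric.

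The core of the argument proceeds as follows. Fix any probability measure $\mu$ on $(\Te,\calb(\Te))$ and any $\epsilon>0$. First, using separability, I would fix a countable dense set $\{x_k\}_{k\in\mathbb{N}}\subset\Te$, so that for every $n\in\mathbb{N}$ the open balls $B(x_k,1/n)$, $k\in\mathbb{N}$, cover $\Te$. Next, by $\sigma$-additivity and the fact that $\mu(\Te)=1$, I can pick $N_n\in\mathbb{N}$ large enough that
\begin{equation}
\mu\!\left(\bigcup_{k=1}^{N_n} B(x_k,1/n)\right) > 1 - \epsilon\, 2^{-n}.
\end{equation}
Setting $A_n:=\bigcup_{k=1}^{N_n}\overline{B(x_k,1/n)}$ and
\begin{equation}
K := \bigcap_{n=1}^{\infty} A_n,
\end{equation}
one obtains a closed subset of $\Te$ that, for each $n$, is contained in a finite union of balls of radius $1/n$, hence is totally bounded. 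Because $\Te$ is a Banach space, it is metrically complete, so any closed totally bounded subset is compact; therefore $K$ is compact.

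To conclude tightness I would estimate
\begin{equation}
\mu(\Te\setminus K) \,\leq\, \sum_{n=1}^{\infty} \mu(\Te\setminus A_n) \,<\, \sum_{n=1}^{\infty} \epsilon\, 2^{-n} \,=\, \epsilon,
\end{equation}
so $\mu(K) > 1-\epsilon$, which is precisely the tightness condition. Since $\epsilon$ was arbitrary, this proves the theorem.

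The only delicate point is step where compactness of $K$ is invoked: total boundedness alone is not enough, and one must use completeness of $\Te$ to deduce that the intersection is genuinely compact. This is where the Banach-space hypothesis (as opposed to a merely separable normed space) is used; the rest of the argument relies only on separability and the countable additivity of $\mu$. Since the result is entirely standard and its proof follows classical lines, I would in practice simply cite \cite{billing} rather than reproduce the argument.
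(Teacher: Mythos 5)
Your proof is correct and is exactly the classical Ulam tightness argument that the paper defers to by citing \cite[Theorem 1.3]{billing}; the paper gives no independent argument of its own. Since you reach the same conclusion by the same standard route (and even note you would cite the reference in practice), there is nothing to reconcile.
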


We present the main result of this subsection.

\begin{theorem}\emph{(Uniform ergodicity of the coupled chain)}\label{thm:main_existence_ergo} Suppose that Assumption  \emph{\ref{Ass:positivity}}  holds. Then, for any level $\ell=1,2,\dots,\lev$,  \begin{enumerate}
		\item The joint Markov chain generated by $\Pisl$ in \eqref{eq:joint_kernel} has a unique invariant probability measure $\nul$ on $(\Te^2,\calb(\Te^2))$.
		\item The joint Markov chain generated by $\Pisl$ is uniformly ergodic. That is, there exists a bounded, $\nul$-integrable, bi-variate Lyapunov function $\VL:\Te^2\to[1,V_{\mathrm{max},\ell}],$ an $r\in(0,1)$ and $M\in\R^+$, such that
		\begin{align}
		\underset{|f|\leq V_{\ell-1,\ell}}{\sup}\left| (\bm P_\mathrm{\ell}^nf)(\tebl)-\int_{\Te^2}f(\tebl)\nul(\diff \tebl)\right|\leq MV_{\mathrm{max},\ell}r^n,  \quad \forall \tebl\in	\Te^2,\ n\in\mathbb{N},
		\end{align}
		where the supremum is taken over all $\nul$-measurable functions $f:\Te^2\to\R$ satisfying $|f(\tebl)|\leq \VL(\tebl)$, $\forall \tebl \in \Te^2.$  
	\end{enumerate}
\end{theorem}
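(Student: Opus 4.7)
The plan is to establish a uniform Doeblin-type minorization for the joint kernel $\Pisl$, and then invoke Theorems \ref{thm:existence_uniquenes_bax} and \ref{thm:bdd_lyap} with the trivial bounded Lyapunov function $\VL \equiv 1$ and small set $S_\ell = \Te^2$. The key quantitative step is to produce a lower bound of the form $\Pisl(\tebl, \cdot) \geq \tilde c_\ell\, \widetilde\nu_\ell(\cdot)$, uniform in $\tebl \in \Te^2$, for some probability measure $\widetilde\nu_\ell$ on $(\Te^2, \calb(\Te^2))$ and some $\tilde c_\ell \in (0,1)$. Once this is in hand, existence and uniqueness of $\nul$ together with uniform ergodicity follow by direct verification of the hypotheses of the two theorems above.

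To derive the minorization, I would rewrite the marginal acceptance-proposal product as
\begin{equation*}
\alpha_j(\telj, z)\, Q_\ell(z) \;=\; \py_j(z)\, \min\bigl\{w_{\ell,j}(z),\, w_{\ell,j}(\telj)\bigr\}, \qquad w_{\ell,j}(\te) := Q_\ell(\te)/\py_j(\te),
\end{equation*}
so that Assumption \ref{Ass:essinf}, which gives $w_{\ell,j} \geq c > 0$ a.e., yields $\min_{j\in\{\ell-1,\ell\}} \alpha_j(\telj, z)\, Q_\ell(z) \geq c\, \min\{\py_{\ell-1}(z), \py_\ell(z)\}$. Inserting this bound into the first (diagonal) line of \eqref{eq:joint_kernel}, which corresponds to configuration $S_1$, and discarding the three remaining non-negative contributions, gives, for every $\tebl \in \Te^2$ and every $B \in \calb(\Te^2)$,
\begin{equation*}
\Pisl(\tebl, B) \;\geq\; c\int_\Te \car{(z,z)\in B}\, \min\{\py_{\ell-1}(z), \py_\ell(z)\}\, \mup(\diff z) \;=:\; \tilde c_\ell\, \widetilde\nu_\ell(B),
\end{equation*}
where $\tilde c_\ell := c\int_\Te \min\{\py_{\ell-1},\py_\ell\}\, \diff\mup \in (0,1)$ is positive thanks to Assumption \ref{Ass:post_is_positive}, and $\widetilde\nu_\ell$ is the corresponding probability measure on $\Te^2$, supported on the diagonal $\Delta := \{(z,z):z\in\Te\}$.

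From this minorization, all hypotheses of Theorem \ref{thm:existence_uniquenes_bax} are immediate: $\psi$-irreducibility with $\psi = \widetilde\nu_\ell$ and strong aperiodicity both follow because $\Te^2$ is a one-step $\widetilde\nu_\ell$-small set of positive $\widetilde\nu_\ell$-mass, while the drift inequality $(\Pisl \VL)(\tebl) = 1 \leq \lambda + \kappa\, \car{\tebl \in \Te^2}$ holds trivially for $\VL \equiv 1$, any $\lambda \in (1-\tilde c_\ell, 1)$ and $\kappa = 1-\lambda$. This yields the unique joint invariant probability measure $\nul$, and then Theorem \ref{thm:bdd_lyap} applied to the bounded Lyapunov $\VL \equiv 1$ delivers uniform ergodicity with $V_{\max,\ell} = 1$ and a geometric rate $r \in (1-\tilde c_\ell, 1)$. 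The main conceptual obstacle I anticipate is that $\widetilde\nu_\ell$ is supported only on $\Delta$, whereas $\nul$ will in general charge off-diagonal configurations $S_2, S_3$; I would therefore include an explicit remark that $\psi$-irreducibility is merely a \emph{reachability} property, and that the chain's ability to reach $\Delta$ from any state in one step with probability $\geq \tilde c_\ell$ is precisely what drives the uniform total-variation convergence of the full joint chain via the coupling argument underlying Theorem \ref{thm:existence_uniquenes_bax}.
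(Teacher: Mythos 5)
Your proof is correct, and it takes a genuinely different and more economical route than the paper's. The identity $\alpha_j(\telj,z)Q_\ell(z)=\py_j(z)\min\{Q_\ell(z)/\py_j(z),\,Q_\ell(\telj)/\py_j(\telj)\}$ combined with Assumption \ref{Ass:essinf} does give the one-step global minorization $\Pisl(\tebl,B)\geq c\int_\Te \car{(z,z)\in B}\min\{\py_{\ell-1}(z),\py_\ell(z)\}\mup(\diff z)$ uniformly in $\tebl\in\Te^2$, i.e.\ the classical Doeblin condition with the \emph{whole} space $\Te^2$ as a one-step small set; irreducibility, strong aperiodicity, and the drift condition with $\VL\equiv1$ then follow at once, and your observation that the diagonal support of the minorizing measure is harmless (it is only a reachability/coupling device, not a candidate for $\nul$) is exactly right. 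The paper instead proceeds through a chain of local lemmas: irreducibility and smallness of \emph{compact} sets (using only positivity and continuity, Assumptions \ref{Ass:prop_is_positive}--\ref{Ass:post_is_positive}), uniform tightness of the marginal kernels (using the compact-level-set Assumption \ref{Ass:compact_level_sets}), and a nontrivial two-valued bounded Lyapunov function $V_j\in\{1,\Gamma\}$ built from tightness, before assembling a drift condition for the bivariate chain. What your argument buys is brevity and transparency: it isolates Assumption \ref{Ass:essinf} as precisely the uniform minorization driving uniform ergodicity, mirroring the Mengersen--Tweedie proof for the marginal IMH (Theorem \ref{thm:unif_erg}). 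What the paper's longer route buys is modularity: its intermediate lemmas identify which conclusions (irreducibility, smallness of compacts, tightness, a genuine drift inequality) survive under the weaker Assumptions \ref{Ass:prop_is_positive}--\ref{Ass:compact_level_sets} alone, which is the natural starting point if one wanted geometric rather than uniform ergodicity when the essential infimum is only level-dependent or degenerates. Both proofs are valid for the theorem as stated.
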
	

We postpone the proof of Theorem \ref{thm:main_existence_ergo} to the end of the this subsection. The main idea behind the proof is to show that Assumption \ref{Ass:positivity}  implies that the joint kernel $\pisl$ satisfies Assumptions \ref{Ass:Small_exs} and \ref{Ass:Strong_drift_exs} of Theorem \ref{thm:existence_uniquenes_bax}. We begin by recalling a well-known result in the theory of Markov chains.

\begin{theorem}[Uniform ergodicity of IMH]\label{thm:unif_erg} For any $\ell=1,2,\dots,\lev$ and $j=\ell-1,\ell$, let $p_{\ell,j}:\Te\times \mathcal{B}(\Te)\to[0,1]$ denote the $\my_j$-reversible Markov transition kernel associated to an Independent Metropolis Hastings algorithm with proposal $\ql$. If $\ql$ and $\my_j$ are such that $\mathrm{ess}\inf_{z\in\Te} \left\{\ql(z)/\py_j(z)\right\}>0$, then, $p_{\ell,j}$ is uniformly ergodic. Conversely, if $\mathrm{ess}\inf_{z\in\Te} \left\{\ql(z)/\py_j(z)\right\}=0,$ then, $p_{\ell,j}$ fails to be ergodic in the sense of \eqref{eq:v_geom_erg}.  
	\begin{proof}
		See \cite[Theorem 2.1]{mengersen1996rates}.
	\end{proof}
\end{theorem}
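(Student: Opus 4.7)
The plan is to split the argument into the two implications: a Doeblin-type minorization to establish uniform ergodicity, and a ``stuck chain'' construction to rule out ergodicity in the opposite direction. Throughout, I would use the identity $\pi^y_j(z)\,\mu_\mathrm{pr}(\diff z)=\mu^y_j(\diff z)$ that follows from the definition of $\pi^y_j$ as a Radon–Nikodym density with respect to the prior, so that integrals against the proposal naturally convert to integrals against the marginal target.

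\textbf{Forward direction (uniform ergodicity).} Set $c:=\mathrm{ess}\inf_{z\in\Te}\{Q_\ell(z)/\pi^y_j(z)\}>0$, and rewrite the accept–product as
\begin{align}
\alpha_j(\te,z)\,Q_\ell(z)=\min\!\left\{Q_\ell(z),\ \pi^y_j(z)\,\frac{Q_\ell(\te)}{\pi^y_j(\te)}\right\}.
\end{align}
By Assumption~\ref{Ass:essinf}, each of the two terms in the minimum is bounded below by $c\,\pi^y_j(z)$ for $\mu_\mathrm{pr}$-almost every $z$ and every $\te\in\Te$. Feeding this into the definition of the marginal kernel $p_{\ell,j}$ yields the whole-space minorization
\begin{align}
p_{\ell,j}(\te,A)\ \geq\ \int_A\alpha_j(\te,z)\,Q_\ell(z)\,\mu_\mathrm{pr}(\diff z)\ \geq\ c\,\mu^y_j(A),\qquad \forall\,\te\in\Te,\ A\in\mathcal{B}(\Te).
\end{align}
This is Doeblin's condition: $\Te$ itself is a small set for $p_{\ell,j}$ with minorizing measure $c\,\mu^y_j$. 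I would then appeal to the standard coupling/splitting argument (e.g.\ \cite[Theorem~16.2.4]{meyn2012markov}) to conclude that $\|p_{\ell,j}^n(\te,\cdot)-\mu^y_j\|_\mathrm{TV}\leq(1-c)^n$ uniformly in $\te$, which is precisely uniform ergodicity in the sense of \eqref{eq:v_geom_erg} with the constant Lyapunov function $V\equiv 1$.

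\textbf{Converse (non-ergodicity when the essential infimum vanishes).} The key observation is the pointwise bound on the one-step acceptance probability at state $\te$:
\begin{align}
a_j(\te):=\int_\Te \alpha_j(\te,z)\,Q_\ell(z)\,\mu_\mathrm{pr}(\diff z)\ \leq\ \frac{Q_\ell(\te)}{\pi^y_j(\te)}\int_\Te\pi^y_j(z)\,\mu_\mathrm{pr}(\diff z)\ =\ \frac{Q_\ell(\te)}{\pi^y_j(\te)},
\end{align}
obtained by dropping the $1$ in the minimum defining $\alpha_j$. If the essential infimum vanishes, then by continuity and strict positivity of $Q_\ell$ and $\pi^y_j$ (Assumptions~\ref{Ass:prop_is_positive}–\ref{Ass:post_is_positive}) I can extract a sequence $\{\te_n\}\subset\Te$ with $Q_\ell(\te_n)/\pi^y_j(\te_n)\to 0$; hence the probability that the chain started at $\te_n$ remains at $\te_n$ after $k$ steps is bounded below by $(1-a_j(\te_n))^k\to 1$ as $n\to\infty$. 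Since $\mu^y_j$ is a (non-atomic, by continuity of $\pi^y_j$) probability measure, for each fixed $n$ I can isolate $\te_n$ inside a neighbourhood of arbitrarily small $\mu^y_j$-measure, so that $\|p_{\ell,j}^k(\te_n,\cdot)-\mu^y_j\|_\mathrm{TV}$ is bounded below by a quantity tending to $1$. This rules out any decay of the form \eqref{eq:v_geom_erg} with a finite Lyapunov function: no bound $MV(\te_n)r^k$ can be uniformly valid in $k$ as $n$ varies, since one may take $k$ as large as one wishes and still drive the left-hand side to $1$ by choosing $n$ large.

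\textbf{Anticipated obstacle.} The forward direction is quite clean once the min-decomposition is written down. The trickier bookkeeping lies in the converse: I need to upgrade ``small acceptance probability along a sequence'' into an actual failure of ergodicity of the type \eqref{eq:v_geom_erg}, which forces me to argue that no pointwise Lyapunov envelope $V(\te_n)$ can compensate for the rejection probability along the bad sequence, and to be careful about whether the bad set has positive reference measure (which is where continuity and strict positivity of the densities are used to convert the essinf into an ordinary inf over the topological support). The full strengthening to non-geometric ergodicity — rather than just non-uniform ergodicity — follows the line of \cite{mengersen1996rates}, and I would simply cite their argument rather than reproduce it.
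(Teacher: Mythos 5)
The paper itself offers no proof of this theorem beyond the citation to Mengersen and Tweedie, so you are effectively reconstructing the cited argument. Your forward direction does this correctly: the identity $\alpha_j(\te,z)Q_\ell(z)=\min\{Q_\ell(z),\pi^y_j(z)Q_\ell(\te)/\pi^y_j(\te)\}$ together with $Q_\ell\geq c\,\pi^y_j$ a.e.\ gives the whole-space minorization $p_{\ell,j}(\te,A)\geq c\,\mu^y_j(A)$, and Doeblin's condition then yields $\lno p_{\ell,j}^n(\te,\cdot)-\my_j\rno_{\mathrm{TV}}\leq(1-c)^n$ uniformly in $\te$. This is exactly the classical argument.

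The converse, however, has a genuine gap in the order of quantifiers. You fix $k$ and send $n\to\infty$ along a sequence $\te_n$ with $a_j(\te_n)\to 0$, concluding that the left-hand side of \eqref{eq:v_geom_erg} can be driven to $1$. But the right-hand side is $MV(\te_n)r^k$, and since $V$ is only required to be finite at each point (not bounded), nothing prevents $V(\te_n)\to\infty$ along your sequence fast enough to absorb the blow-up; your argument therefore only rules out \emph{uniform} ergodicity, not ergodicity in the sense of \eqref{eq:v_geom_erg}. The repair reverses the quantifiers: fix a single state $\te$ and let $k\to\infty$. Since $\sup_{|f|\leq V}|P^kf(\te)-\mu(f)|\geq P^k(\te,B)-\mu(B)\geq(1-a_j(\te))^k-\mu(B)$ for shrinking neighbourhoods $B$ of $\te$ (using non-atomicity of $\my_j$, which incidentally requires $\mup$ non-atomic, not merely continuity of the density), the bound \eqref{eq:v_geom_erg} forces $(1-a_j(\te))^k\leq MV(\te)r^k$ for all $k$ with $V(\te)<\infty$ a \emph{fixed} number; letting $k\to\infty$ gives $1-a_j(\te)\leq r$, hence $Q_\ell(\te)/\py_j(\te)\geq a_j(\te)\geq 1-r$ for ($\mu^y_j$-a.e.) $\te$. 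This contradicts $\mathrm{ess}\inf_z Q_\ell(z)/\py_j(z)=0$, which guarantees a set of positive measure on which the ratio falls below $1-r$. With that one change your proof is complete and self-contained.
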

Thus, from Theorem \ref{thm:unif_erg}, it can be seen that Assumption \ref{Ass:essinf} implies uniform ergodicity of the marginal chains of the multi-level MCMC algorithm.

We proceed to  investigate the convergence of the coupled Markov chain generated by Algorithm \ref{algo:2-IS_coupling} and start by studying its irreducibility. Loosely speaking, Assumption \ref{Ass:positivity} implies that  \eqref{eq:joint_kernel} is positive $\forall \tebl^n\in\Te$ and $A\in\mathcal{B}(\Te^2)$. We proceed to show this more formally.
\begin{lemma}\emph{(Irreducibility of the joint chain)}\label{lemma:irreducibility}
Suppose Assumption \emph{\ref{Ass:positivity}} holds. Then, for any $\ell=1,2,\dots,\lev$, the joint Markov transition kernel $\pisl$ defined in \eqref{eq:joint_kernel} is $\psi$-irreducible. 
\begin{proof}
 Let $A\in \calb(\Te^2)$ be a measurable set, and let $K\in\mathcal{B}(\Te)$ be any compact set with non-zero $\mup$ mass, i.e., $\int_K \mup(\diff \te)>0$. Lastly, define the (also compact) set $K^2:=K\times K,$ $K^2\in\mathcal{B}(\Te^2)$, and  $A_K:=A\cap K^2$. 
 For a given state $\tebl\in\Te^2$ we have that  \begin{align}
\pisl(\tebl,A)\geq\pisl(\tebl,A_K)\geq\int_{\Te}\min\{\alpha_{\ell-1}(\tell,z),\alpha_\ell(\tel,z) \}\ql(z)\car{(z,z)\in A_K}\mup(\diff z).
\end{align}
In addition, let $\Te_{\ell-1;\tebl}:=\{z\in\Te\text{ s.t. }\alpha_{\ell-1}(\tell,z)\leq\alpha_{\ell}(\tel,z)\},$ and, similarly, 	$\Te_{\ell;\tebl}:=\{z\in\Te\text{ s.t. }\alpha_{\ell-1}(\tell,z)>\alpha_{\ell}(\tel,z)\}.$ Then we have that 
\begin{align}
&\int_{\Te}\min\{\alpha_{\ell-1}(\tell,z),\alpha_\ell(\tel,z) \}\ql(z)\car{(z,z)\in A_K}\mup(\diff z)=\\
&\int_{\Te_{\ell-1;\tebl}}\alpha_{\ell-1}(\tell,z)\ql(z)\car{(z,z)\in A_K}\mup(\diff z)\\
&+\int_{\Te_{\ell;\tebl}}\alpha_\ell(\tel,z)\ql(z)\car{(z,z)\in A_K}\mup(\diff z). \label{eq:integrals_irred}
\end{align}
Since  $\ql(z),\pyll(z),\pyl(z)$ are continuous and positive $\forall z \in \Te$, it follows that there exist  $\epsilon,\epsilon',\epsilon''>0$ such that $\epsilon\leq\ql(z),$  $\epsilon'\leq\pyll(z),$ and $\epsilon''\leq\pyl(z),\  \forall z\in K,$ and hence $\forall (z,z)\in A_K$. Thus, we have that 
\begin{align}
&\int_{\Te_{\ell-1;\tebl}}\alpha_{\ell-1}(\tell,z)\ql(z)\car{(z,z)\in A_K}\mup(\diff z)\\ &\geq  \int_{\Te_{\ell-1;\tebl}}\min\left\{\epsilon,\frac{\pyll(z)}{\pyll(\tell)}\ql(\tell) \right\}\car{(z,z)\in A_K}\mup(\diff z)\\
&\geq f_{\ell-1}(\tell,\epsilon')\int_{\Te_{\ell-1;\tebl}}\car{(z,z)\in A_K}\mup(\diff z),
\end{align}
with $f_{\ell-1}(\tell,\epsilon'):=\min\left\{\epsilon,\frac{\epsilon'\ql(\tell)}{\pyll(\tell)} \right\}>0$. A similar bound holds for the second integral in \eqref{eq:integrals_irred},  namely \begin{align}
\int_{\Te_{\ell;\tebl}}\alpha_\ell(\tel,z)\ql(z)\car{(z,z)\in A_K}\mup(\diff z)\geq f_\ell(\tel,\epsilon'')\int_{\Te_{\ell;\tebl}}\car{(z,z)\in A_K}\mup(\diff z),
\end{align}
with $  f_\ell(\tel,\epsilon''):=\min\left\{\epsilon,\frac{\epsilon''\ql(\tel)}{\pyl(\tel)} \right\}>0$. Setting $\hat f(\tebl,\epsilon,\epsilon',\epsilon''):=\min\{f_{\ell-1},f_\ell\}$ gives the bound 
\begin{align}
\pisl(\tebl,A)\geq f(\tebl,\epsilon,\epsilon',\epsilon'')\int_\Te\car{(z,z)\in A_K}\mup(\diff z).
\end{align}
Choosing $\psi(A)=\int_\Te\car{(z,z)\in A\cap K^2}\mup(\diff z),$ gives the desired result.  
\end{proof}
\end{lemma}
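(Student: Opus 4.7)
The plan is to prove one-step $\psi$-irreducibility by exploiting the transition to the diagonal $\Delta = \{(z,z) : z \in \Te\} \subset \Te^2$ that occurs whenever both chains accept a common proposed candidate (configuration $S_1$ in Figure \ref{fig:drawing}). The reference measure $\psi$ will be concentrated on $\Delta$, and the first term of \eqref{eq:joint_kernel} alone will supply the required positive mass on any measurable set.

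First, I would fix any compact set $K \in \calb(\Te)$ with $\mup(K) > 0$; such a set exists because every probability measure on the separable Banach space $\Te$ is tight (Theorem \ref{thm:billingsley}). Then I define the candidate irreducibility measure by $\psi(A) := \int_\Te \car{(z,z) \in A \cap K^2} \mup(\diff z)$, which is clearly nontrivial and $\sigma$-finite. Retaining only the diagonal (first) term in \eqref{eq:joint_kernel} and restricting the $z$-integration to $K$ gives the lower bound
\begin{align}
\pisl(\tebl, A) \;\geq\; \int_K \min\{\alpha_{\ell-1}(\tell, z),\, \alpha_\ell(\tel, z)\}\, \ql(z)\, \car{(z,z) \in A}\, \mup(\diff z).
\end{align}

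Next, continuity and strict positivity of $\ql$, $\pyll$, $\pyl$ (Assumptions \ref{Ass:prop_is_positive}--\ref{Ass:post_is_positive}) combined with compactness of $K$ yield uniform lower bounds $\ql(z) \geq \epsilon$ and $\py_j(z) \geq \epsilon'_j$ on $K$, for $j = \ell-1, \ell$. To handle the pointwise minimum inside the integral, I would partition $K$ into the subset where $\alpha_{\ell-1}(\tell,z) \leq \alpha_\ell(\tel,z)$ and its complement; on each piece the smaller acceptance probability can be bounded below directly from the Metropolis formula by inserting the compact-set constants, producing a strictly positive expression of the form $\min\{\epsilon,\,\epsilon'_j \ql(\te_{\ell,j})/\py_j(\te_{\ell,j})\}$ that depends on $\tebl$ but not on $z$. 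Summing the two contributions yields $\pisl(\tebl, A) \geq f(\tebl)\, \psi(A)$ with $f(\tebl) > 0$ for every $\tebl \in \Te^2$, which is exactly one-step $\psi$-irreducibility.

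The only mild obstacle is dealing with the pointwise minimum of the two acceptance probabilities inside the integral; the partition of $K$ described above resolves this, and the rest of the argument is elementary. It is worth emphasizing that this lemma relies only on the continuity-and-positivity portion of Assumption \ref{Ass:positivity} (items \ref{Ass:prop_is_positive}--\ref{Ass:post_is_positive}); the essential-infimum and moment conditions \ref{Ass:essinf}--\ref{Ass:p_moments} play no role in irreducibility and will instead be invoked later to establish the drift condition and uniform ergodicity of the joint chain.
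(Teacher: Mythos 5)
Your proposal is correct and follows essentially the same route as the paper's proof: retain only the diagonal (both-accept) term of the joint kernel, take $\psi$ to be the prior restricted to the diagonal of a compact set $K^2$, partition according to which acceptance probability is smaller, and use continuity and positivity on the compact set to obtain a strictly positive, $z$-independent lower bound $f(\tebl)$. The only (minor, welcome) addition is your explicit appeal to tightness to justify the existence of a compact set of positive prior mass, which the paper leaves implicit.
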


%
%

We now proceed to show the existence of small sets for the joint kernel.  In words,  the next Lemma \ref{lemma:small_set} states that compact subsets of $\Te^2$ are small sets, provided Assumption \ref{Ass:positivity} holds true. 

\begin{lemma}\emph{(Existence of small sets)}\label{lemma:small_set} Let $\hat S_\ell\in \calb(\Te^2)$ be a compact subset and suppose \emph{Assumption \ref{Ass:positivity}} holds. Then, $\hat S_\ell$ is a small set for the Markov kernel $\pisl$.
\begin{proof} We follow an argument similar to that in \cite[Section 3]{roberts2004general}. Let $\tebl=(\tell,\tel)\in \hat S_\ell,$ and let $B\in\calb(\Te^2)$ be some measurable set. Since $\hat S_\ell$ is compact, Assumption \ref{Ass:positivity} ensures that there exist positive constants $\epsilon_{\ell}, \epsilon_{\ell}'\in \R^+$ such that $0<\epsilon_{\ell}\leq \ql(\telj)$ and $\pi^y_j(\telj)\leq \epsilon'_{\ell}$, $j=\ell-1,\ell$. Then, we have that: 
	\begin{align}
	&\pisl(\tebl,B)\geq \int_\Te\min\left\{\alpha_{\ell-1}(\tell,z),\alpha_{\ell}(\tel,z)\right\}\ql(z)\car{(z,z)\in B}\mup(\diff z)\\
	&\geq \int_\Te\min\left\{\min\left\{\ql(z),\pyll(z)\frac{\epsilon_{\ell}}{\epsilon'_{\ell}}\right\},\min\left\{\ql(z),\pyl(z)\frac{\epsilon_{\ell}}{\epsilon'_{\ell}}\right\}\right\}\car{(z,z)\in B}\mup(\diff z)\\
	&=\nu_\mathrm{small}(B),
	\end{align}
	where we have denoted 
	\begin{align}
	&\nu_\mathrm{small}(B):=\\
	&\int_\Te\min\left\{\min\left\{\ql(z),\frac{\pyll(z)\epsilon_{\ell}}{\epsilon'_{\ell}}\right\},\min\left\{\ql(z),\frac{\pyl(z)\epsilon_{\ell}}{\epsilon'_{\ell}}\right\}\right\}\car{(z,z)\in B}\mup(\diff z).
	\end{align}
\end{proof}	
\end{lemma}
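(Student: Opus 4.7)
The plan is to take $m=1$ in the definition of a small set and exhibit a single fixed nontrivial positive measure on $(\Te^2,\calb(\Te^2))$ that is dominated by $\pisl(\tebl,\cdot)$ uniformly over $\tebl\in\hat S_\ell$. Only the first (``both-accept'') term in the definition \eqref{eq:joint_kernel} of $\pisl$ is needed; the other three branches, being non-negative, can simply be discarded to produce a lower bound of the desired form.

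First I would exploit compactness. Since $\hat S_\ell\subset\Te^2$ is compact, its two coordinate projections are compact subsets of $\Te$. By Assumptions \ref{Ass:prop_is_positive} and \ref{Ass:post_is_positive}, the maps $\ql$, $\pyll$ and $\pyl$ are continuous and strictly positive on $\Te$, hence attain strictly positive infima and finite suprema on these compacts. In particular, there exists $\epsilon_\ell>0$ such that $\ql(\tell)\ge\epsilon_\ell$ and $\ql(\tel)\ge\epsilon_\ell$, and there exist finite $\epsilon'_\ell,\epsilon''_\ell$ such that $\pyll(\tell)\le\epsilon'_\ell$ and $\pyl(\tel)\le\epsilon''_\ell$, valid simultaneously for every $(\tell,\tel)\in\hat S_\ell$.

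Next I would use the identity $\alpha_j(\telj,z)\,\ql(z)=\min\{\ql(z),\,\pi^y_j(z)\ql(\telj)/\pi^y_j(\telj)\}$ (which is immediate from the definition of $\alpha_j$) and substitute the bounds above to deduce that, for every $\tebl=(\tell,\tel)\in\hat S_\ell$ and every $z\in\Te$,
\begin{align}
\min\{\alpha_{\ell-1}(\tell,z),\alpha_\ell(\tel,z)\}\ql(z)\ge \min\!\Bigl\{\min\bigl\{\ql(z),\tfrac{\epsilon_\ell}{\epsilon'_\ell}\pyll(z)\bigr\},\min\bigl\{\ql(z),\tfrac{\epsilon_\ell}{\epsilon''_\ell}\pyl(z)\bigr\}\Bigr\}.
\end{align}
The crucial point is that the right-hand side no longer depends on $\tebl$. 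Integrating against $\car{(z,z)\in B}\mup(\diff z)$ and retaining only the first line of \eqref{eq:joint_kernel} yields $\pisl(\tebl,B)\ge\nu_{\mathrm{small}}(B)$ for every $B\in\calb(\Te^2)$, where $\nu_{\mathrm{small}}$ is precisely the measure displayed in the statement.

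Finally I would verify that $\nu_{\mathrm{small}}$ is nontrivial. It is manifestly a positive Borel measure on $\Te^2$ supported on the diagonal $\{(z,z):z\in\Te\}$, and taking $B=\Te^2$ gives $\nu_{\mathrm{small}}(\Te^2)=\int_\Te\min\{\,\cdots\,\}\mup(\diff z)$, whose integrand is a min of strictly positive, $\mup$-a.e.\ finite quantities by Assumptions \ref{Ass:prop_is_positive} and \ref{Ass:post_is_positive}, so the integral is strictly positive. The only mild bookkeeping issue is keeping track of the two distinct posterior upper bounds $\epsilon'_\ell,\epsilon''_\ell$ and the common lower bound $\epsilon_\ell$ on $\ql$; no substantive obstacle arises, because all the heavy lifting is done by the compactness of $\hat S_\ell$ combined with the strict positivity and continuity built into Assumption \ref{Ass:positivity}.
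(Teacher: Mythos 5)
Your proposal is correct and follows essentially the same route as the paper's proof: discard all but the both-accept term of the joint kernel, use compactness of $\hat S_\ell$ with the positivity and continuity in Assumption \ref{Ass:positivity} to bound $\ql$ from below and the posteriors from above, and apply the identity $\alpha_j(\telj,z)\ql(z)=\min\{\ql(z),\pi^y_j(z)\ql(\telj)/\pi^y_j(\telj)\}$ to obtain a minorizing measure independent of the starting state. The only differences are cosmetic (you carry two posterior upper bounds where the paper uses one, and you add an explicit check that $\nu_{\mathrm{small}}$ is nontrivial, which the paper leaves implicit).
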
	

By definition of strong aperiodicity \cite[Section 5.4]{meyn2012markov}, we have from Lemmata \ref{lemma:irreducibility} and \ref{lemma:small_set}  that the chain generated by $\pisl$ is strongly aperiodic, and hence, aperiodic \cite[Section 5.4]{meyn2012markov}.

 We now proceed to show an auxiliary result that we will use to prove ergodicity of our multi-level sampler. 
\begin{lemma}\label{lemma:drift_condition_aux} Suppose Assumption \emph{\ref{Ass:positivity}} holds, let $V_j:\Te\to[1,\infty]$, $j=\ell-1,\ell$, be a $\mup$-integrable function and suppose $V(\tebl)=V_j(\telj)$.  Then, 
	\begin{align}
	(\Pisl V)(\tebl)=(P_{\ell,j}V_j)(\telj).
	\end{align}
	\begin{proof} Assume, without loss of generality, that $j=\ell.$ From \eqref{eq:joint_kernel} we have that:
	\begin{align}
	(\Pisl V)(\tebl)=&\int_{\Te}\min\{\alpha_{\ell-1}(\tell,z),\alpha_\ell(\tel,z) \}V_\ell(z)\ql(z)\mup(\diff z)&\mathrm{(I)}\\
	+&\int_\Te (\alpha_{\ell-1}(\tell,z)-\alpha_{\ell}(\tel,z))^+V_\ell(\tel)\ql(z)\mup(\diff z) &\mathrm{(II)}\\
	+&\int_\Te (\alpha_{\ell}(\tel,z)-\alpha_{\ell-1}(\tell,z))^+V_\ell(z)\ql(z)\mup(\diff z) &\mathrm{(III)}\\
	+&V_\ell(\tel)\left(1-\int_\Te \max\{\alpha_{\ell-1}(\tell,z),\alpha_\ell(\tel,z) \}\ql(z)\mup(\diff z)\right).&\mathrm{(IV)}
	\end{align}
	 On an attempt to simplify notation, we introduce $\mathsf{D}_{\ell-1}(\tebl,z):=\alpha_{\ell-1}(\tell,z)-\alpha_{\ell}(\tel,z)$ and  $\mathsf{D}_{\ell}(\tebl):=-\mathsf{D}_{\ell-1}(\tebl).$ We analyze each of the four terms (I), (II), (III), (IV) separately.  We begin with term (I). Since $\min\{a,b\}=\frac{a+b-|a-b|}{2}\ \forall a,b\in \R,$ we have that
	\begin{align}
	\mathrm{(I)}&=\frac{1}{2}\left(\int_\Te\alpha_{\ell-1}(\tell,z)\ql(z)V_\ell(z)\mup(\diff z)+\int_\Te\alpha_\ell(\tel,z)\ql(z)V_\ell(z)\mup(\diff z)\right. \\&\left.-\int_\Te |\mathsf{D}_{\ell-1}(\tebl,z)|V_\ell(z)\ql(z)\mup(\diff z)\right). \label{eq:I}
	\end{align}
	Moreover, since $(a)^+=\frac{a+|a|}{2}, \forall a\in \R,$  we can write (II) and (III) as:
		\begin{align}
		\mathrm{(II)}=&V_\ell(\tel) \int_\Te\frac{\mathsf{D}_{\ell-1}(\tebl,z)+|\mathsf{D}_{\ell-1}(\tebl,z)|}{2}\ql(z)\mup(\diff z),\label{eq:II}\\
		\mathrm{(III)}=&\int_\Te\frac{\mathsf{D}_{\ell}(\tebl,z)+|\mathsf{D}_{\ell}(\tebl,z)|}{2}V_\ell(z)\ql(z)\mup(\diff z)\label{eq:III}.
		\end{align}	
	Adding Equations \eqref{eq:I} and \eqref{eq:III} gives 
	\begin{align}
	\text{(I)+(III)}=\int_\Te\alpha_\ell(\tel,z)\ql(z)V_\ell(z)\mup(\diff z) \label{I+III}.
	\end{align}
Furthermore,  using the fact that $\max\{a,b\}=\frac{a+b+|a-b|}{2},\ \forall a,b\in \R,$  we can write (IV) as 
	\begin{align}\label{eq:IV}
	\mathrm{(IV)}&=V_\ell(\tel)-V_\ell(\tel)\int_\Te\frac{\alpha_{\ell-1}(\tell,z)+\alpha_{\ell}(\tel,z)+|\mathsf{D}_\ell(\tebl,z)|}{2}\ql(z)\mup(\diff z).
	\end{align}
Adding Equations \eqref{eq:II} and \eqref{eq:IV} gives 
	\begin{align}
	\text{(II)+(IV)}=V_\ell(\tel)\left(1-\int_\Te\alpha_{\ell}(\tel,z)\ql(z)\mup(\diff z)\right)\label{eq:II+IV}.
	\end{align}
	Lastly, adding \eqref{I+III} and \eqref{eq:II+IV} gives 
	\begin{align}
	(\Pisl V)(\tebl)=&\int_\Te\alpha_\ell(\tel,z)\ql(z)V(z)\mup(\diff z)+V_\ell(\tel)\left(1-\int_\Te\alpha_{\ell}(\tel,z)\ql(z)\mup(\diff z)\right)\\=&(P_{\ell,\ell}V_\ell)(\tel),
	\end{align}
	as desired.
\end{proof}

\end{lemma}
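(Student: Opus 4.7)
The plan is to prove the identity by direct computation from the definition \eqref{eq:joint_kernel} of the joint kernel. Without loss of generality, take $j=\ell$, so that the test function satisfies $V(\tebl)=V_\ell(\tel)$; the case $j=\ell-1$ is completely symmetric and needs no new idea. The joint kernel decomposes into four pieces corresponding to the configurations $S_1,S_2,S_3,S_4$ depicted in Figure \ref{fig:drawing}. When one integrates $V$ against each piece, $V$ is evaluated at $V_\ell(z)$ on $S_1=(z,z)$ and $S_3=(\tell,z)$ (the two configurations in which the fine chain accepts) and at $V_\ell(\tel)$ on $S_2=(z,\tel)$ and $S_4=(\tell,\tel)$ (those in which the fine chain rejects). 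The whole point of the identity is that, after summing over these four configurations, the coarse-level acceptance probability $\alpha_{\ell-1}(\tell,z)$ has to disappear, leaving behind only the marginal $\alpha_\ell(\tel,z)$.

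The main algebraic step is to apply the two elementary identities
\[
\min\{a,b\} + (b-a)^+ = b, \qquad \max\{a,b\} - (a-b)^+ = b,
\]
with $a=\alpha_{\ell-1}(\tell,z)$ and $b=\alpha_\ell(\tel,z)$. Combining the $S_1$ and $S_3$ integrands produces
\[
V_\ell(z)\ql(z)\bigl[\min\{\alpha_{\ell-1},\alpha_\ell\} + (\alpha_\ell-\alpha_{\ell-1})^+\bigr] = V_\ell(z)\,\alpha_\ell(\tel,z)\,\ql(z),
\]
so the dependence on $\tell$ is gone. Combining the $S_2$ and $S_4$ contributions and using $\max\{\alpha_{\ell-1},\alpha_\ell\} - (\alpha_{\ell-1}-\alpha_\ell)^+ = \alpha_\ell$ yields
\[
V_\ell(\tel)\Bigl(1 - \int_\Te \alpha_\ell(\tel,z)\,\ql(z)\,\mup(\diff z)\Bigr),
\]
again with no residual $\tell$-dependence. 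Adding the two halves reproduces precisely the marginal kernel action $(P_{\ell,\ell}V_\ell)(\tel)$ given by \eqref{eq:marginal_kernel}.

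There is really no substantial obstacle here: the argument is pure algebra once the four configurations are laid out, and integrability is automatic since the Metropolis ratios are bounded by $1$ while $V_\ell\ql$ is $\mup$-integrable by hypothesis. The only care needed is to keep track of signs when unfolding the positive-part operator $(\cdot)^+$ and when combining it against $\min$ and $\max$. Conceptually, this lemma is the bridge that will allow the uniform ergodicity already known for the marginal IMH kernels via Theorem \ref{thm:unif_erg} to be transferred to a drift inequality for the joint kernel $\Pisl$: one chooses a Lyapunov function depending on only one coordinate (or a sum/product of two such functions) and then reduces the joint drift computation to the marginal drift computations for $P_{\ell,\ell-1}$ and $P_{\ell,\ell}$, which is exactly what the next step of the convergence analysis will require.
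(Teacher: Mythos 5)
Your proposal is correct and follows essentially the same route as the paper: the same decomposition into the four configurations, the same pairing of the $S_1$/$S_3$ and $S_2$/$S_4$ contributions, and the same cancellation of $\alpha_{\ell-1}$. The only cosmetic difference is that you invoke the identities $\min\{a,b\}+(b-a)^+=b$ and $\max\{a,b\}-(a-b)^+=b$ directly, whereas the paper reaches the same cancellations by first expanding $\min$, $\max$, and $(\cdot)^+$ in terms of $\tfrac{a+b\pm|a-b|}{2}$ and $\tfrac{a+|a|}{2}$.
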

The previous result is quite intuitive, in the sense that we expect the Markov transition kernel of the multi-level MCMC algorithm \eqref{eq:joint_kernel} to ``respect'' the right marginals. 
The next Lemma shows that under Assumption \ref{Ass:positivity}, the families of measures $\{ \plj(\te,\cdot), \te \in \Te\}$, $j=\ell,\ell-1$ are uniformly tight. 

\begin{lemma}[Uniform tightness of the marginal kernel]
Suppose Assumption \ref{Ass:positivity} holds. Then, for any level $\ell=0,1,\dots,\lev$, the Markov kernel $p_{\ell,j}$, $j=\ell-1,\ell$  is uniformly tight, i.e., given some fixed, positive $\epsilon$, there exists a compact set $K_\epsilon\in\mathcal{B}(\Te)$ such that \begin{align}
p_{\ell,j}(\te,K_\epsilon)>1-\epsilon, \quad \forall \te\in \Te.
\end{align}
\begin{proof}
Exploiting the fact that $\mup$, $\my_j,$ $j=\ell-1,\ell$, and $Q_\ell$ are tight measures on $(\Te,\calb(\Te))$, there exists a compact set $K_\epsilon\in\mathcal{B}(\Te)$ such that  $\int_{K_\epsilon}Q_\ell(z)\mup(\diff z)>\epsilon$,  $\int_{K_\epsilon}\py_j(z)\mup(\diff z)>\epsilon$, and $\int_{K_\epsilon}\mup(\diff z)>\epsilon$. Take $r=\underset{z\in K_\epsilon}{\max} \  Q_\ell(z)/\py_j(z),$ and define the set $A_r=\{ z\in \Te \text{ s.t }  Q_\ell(z)/\py_j(z) \leq r \}$.  Clearly, $A_r\supset K_\epsilon$, and 
\begin{align}
\int_{A_r}Q_\ell(z)\mup(\diff z)>1-\epsilon, \ \int_{A_r}\py_j(z)\mup(\diff z)>1-\epsilon, \ \int_{A_r}\mup(\diff z)>1-\epsilon.
\end{align} Furthermore, from Assumption \ref{Ass:compact_level_sets}, $A_r$ is compact. Then, we have that 
\begin{align}
p_{\ell,j}(\te,A^c_r)&=\underbrace{\int_{A^c_r}\alpha_j(\te,z)Q_\ell(z)\mup(\diff z)}_\text{I}\\&+\underbrace{\car{\te\in A^c_r}\left(1-\int_\Te \alpha_j(\te,z)Q_\ell(z)\mup(\diff z)\right)}_\text{II}.
\end{align}
Clearly, we have that $$I\leq \int_{A^c_r}Q(z)\mup(\diff z)\leq \epsilon,\quad \forall \te\in\Te.$$ Moreover, we have that $II=0$ if $\te\in A_r$. For $\te\in A^c_r$, we have
 \begin{align}
\text{II}&=1-\int_{A_r}\alpha_j(\te,z)Q_\ell(z)\mup(\diff z)\\&-\int_{A^c_r}\alpha_j(\te,z)Q_\ell(z)\mup(\diff z)\\
&\leq 1-\int_{A_r}\min\left\{1, \frac{Q_\ell(\te)\py_j(z)}{Q_\ell(z)\py_j(\te)}\right\}Q_\ell(z)\mup(\diff z)=\text{III}
\end{align}
Notice that, by Assumption \ref{Ass:compact_level_sets}, we have that $\frac{Q_\ell(\te)}{\py_j(\te)}>\frac{Q_\ell(z)}{\py_j(z)}$ $\forall \te\in A^c_r$, $\forall z\in A_r. $ Then, \begin{align}
\text{III}=1-\int_{A_r} Q_\ell(z)\mup(\diff z)\leq \epsilon
\end{align} Thus, adding everything up we get  \begin{align}
p_{\ell,j}(\te,A^c_r)\leq 2\epsilon\quad \forall \te\in\Te,
\end{align}
which implies uniform tightness of the marginal kernel. 
\end{proof}
\end{lemma}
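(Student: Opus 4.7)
The approach is to decompose the IMH kernel into its acceptance and rejection parts and bound each uniformly in $\te$. Writing
\begin{align}
p_{\ell,j}(\te, A^c) = \int_{A^c}\alpha_j(\te,z)Q_\ell(z)\mup(\diff z) + \car{\te\in A^c}\left(1-\int_\Te\alpha_j(\te,z)Q_\ell(z)\mup(\diff z)\right),
\end{align}
I observe that the first (acceptance) term is automatically controlled by $\int_{A^c} Q_\ell(z)\mup(\diff z)$ because $\alpha_j\le 1$, and this is small for any sufficiently large $A$ by tightness of the probability measure with $\mup$-density $Q_\ell$ on the separable Banach space $\Te$ (Theorem~\ref{thm:billingsley}). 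The second (rejection) term vanishes on $A$, so the whole difficulty is to uniformly control the rejection probability $1-\int_\Te \alpha_j(\te,z)Q_\ell(z)\mup(\diff z)$ for $\te$ outside the candidate compact set.

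To do this I would invoke Assumption~\ref{Ass:compact_level_sets}. First, pick a compact $K_\epsilon$ with $\mup$-, $\my_j$-, and $(Q_\ell\mup)$-mass at least $1-\epsilon$. By continuity and positivity of $Q_\ell/\py_j$ (Assumptions \ref{Ass:prop_is_positive}--\ref{Ass:post_is_positive}) and compactness of $K_\epsilon$, the quantity $r := \sup_{z\in K_\epsilon} Q_\ell(z)/\py_j(z)$ is finite. Then I would enlarge $K_\epsilon$ to the level set $A_r := \{z\in\Te : Q_\ell(z)/\py_j(z)\le r\}$, which is compact by Assumption~\ref{Ass:compact_level_sets} and still satisfies the mass bounds since $A_r \supset K_\epsilon$. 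The key observation is: for any $\te\in A_r^c$ we have $Q_\ell(\te)/\py_j(\te) > r \ge Q_\ell(z)/\py_j(z)$ for every $z\in A_r$, so the Metropolis--Hastings ratio $\py_j(z)Q_\ell(\te)/(\py_j(\te)Q_\ell(z))$ exceeds $1$ and hence $\alpha_j(\te,z)=1$ throughout $A_r$. This forces
\begin{align}
\int_\Te \alpha_j(\te,z)Q_\ell(z)\mup(\diff z) \ge \int_{A_r} Q_\ell(z)\mup(\diff z) \ge 1-\epsilon,
\end{align}
so the rejection term is bounded by $\epsilon$ on $A_r^c$ (and is zero on $A_r$).

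Combining the two estimates yields $p_{\ell,j}(\te, A_r^c) \le 2\epsilon$ uniformly in $\te\in\Te$, which is the desired uniform tightness after harmless rescaling of $\epsilon$. The main obstacle in this plan is precisely the second step: naively, the rejection probability can be large at points $\te$ that sit in the tails of $\my_j$, because there the chain tends to stay put. The specific geometric compatibility between $Q_\ell$ and $\py_j$ encoded in the compact level-set Assumption~\ref{Ass:compact_level_sets}, which roughly says that the proposal's tails dominate those of the posterior, is what turns this around: at ``far-out'' states, any proposal landing in the bulk $A_r$ is automatically accepted, so tails of the proposal themselves become an asset rather than a liability. Without this structural assumption, uniform tightness would generally fail.
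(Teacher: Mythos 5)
Your proof is correct and follows essentially the same route as the paper's: the same acceptance/rejection decomposition, the same enlargement of a high-mass compact set $K_\epsilon$ to the level set $A_r$ of $Q_\ell/\py_j$ (compact by Assumption \ref{Ass:compact_level_sets}), and the same key observation that for $\te\in A_r^c$ and $z\in A_r$ the Metropolis ratio exceeds one so the proposal is accepted, bounding the rejection term by $\epsilon$. Your explicit remark that $\alpha_j(\te,z)=1$ on $A_r$ for such $\te$ makes transparent what the paper leaves implicit in its computation of the term III.
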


\begin{lemma}\emph{(Drift condition for the joint chain)}\label{lemma:drift_condition} Suppose that Assumption \emph{\ref{Ass:positivity}} holds. Then, for any level $\ell=1,2,\dots,\lev$, the multi-level Markov transition kernel $\pisl$ satisfies a drift condition of the form \eqref{eq:drift_cond}.  
\begin{proof}
We follow an argument similar to that of \cite[Theorem 5.2]{roberts1999bounds}, which has been previously  used in the analysis of regeneration times for Markov chains. We begin by constructing a particular Lyapunov function and a small set such that a drift condition holds for the marginal Markov transition kernels. From the uniform tightness of the marginal kernels, we have that for any $\ell=1,\dots,\lev$, $j=\ell-1,\ell,$ and some fixed $\epsilon\in(0,\frac{1}{4})$, there exist compact sets $K_{2\epsilon}, \ \tilde K_\epsilon\ \in \mathcal{B}(\Te)$  with
 \begin{align}
\plj(\te,\tilde K_\epsilon)>1-\epsilon \quad \forall \te\in \Te,\\
\plj(\te,K_{2\epsilon})>1-2\epsilon \quad \forall \te\in \Te.
\end{align}
Take then $K_\epsilon:=\tilde K_\epsilon\cup K_{2\epsilon}$ so that $\plj(\te,K_\epsilon)>\plj(\te,\tilde{K}_\epsilon)>1-\epsilon$ and $K_{2\epsilon}\subset K_\epsilon$. We can now construct a bounded Lyapunov function $V_{j}$ such that $V_{j}=1$ on $K_{2\epsilon}$ and $V_{j}=\Gamma$ on $K_\epsilon^c$, for some $\Gamma>1$ large enough such that \begin{align}\label{eq:beta_of_gamma}
\Lambda:=\frac{1+(\Gamma-1)2\epsilon}{\Gamma}+\frac{(1-\frac{1+(\Gamma-1)2\epsilon}{\Gamma}+(\Gamma-1)2\epsilon)}{\Gamma+1}<1.\end{align}

 Then, for $\te\in K_{\epsilon}$ and $\lambda$ arbitrary we have 
\begin{align}
(\Plj V_{j})(\te)&=\int_\Te \plj(\te,\diff z)V_{j}(z)=\int_\Te \plj(\te,\diff z)(V_{j}(z)\pm 1)\\
&=1+\int_{K_{2\epsilon}^c}\plj(\te,\diff z)(V_{j}(z)-1)\leq 1+(\Gamma-1)2\epsilon\\
&\leq \lambda V_j(\te)+(1-\lambda+(\Gamma-1)2\epsilon).
\end{align}
Conversely, for $\te\in K^c_\epsilon$, 
\begin{align}
(\Plj V_{j})(\te)\leq \frac{1+(\Gamma-1)2\epsilon}{\Gamma}\Gamma= \frac{1+(\Gamma-1)2\epsilon}{\Gamma}V_{j}(\te)=\lambda V_{j}(\te),
\end{align}
with $\lambda=\frac{1+(\Gamma-1)2\epsilon}{\Gamma}$, which satisfies $\lambda<1$ since $\epsilon<1/4$. Thus, we have that $\Plj$ satisfies the following drift condition for any $\te\in \Te$ 
\begin{align}
(\Plj V_{j})(\te)\leq\lambda V_{j}(\te)+\underbrace{(1-\lambda+(\Gamma-1)2\epsilon)}_\text{$:=\kappa$}\car{\te \in K_\epsilon}.
\end{align}
We can now define the bi-variate Lyapunov function $\VL:\Te\times \Te\to [1,\infty)$ as $\VL(\tebl):=\frac{1}{2}(V_\ell(\tel)+V_{\ell-1}(\tell))$ and the small set $\widehat{S}_\ell:=K_\epsilon\times K_\epsilon$. We begin by analyzing the Lyapunov condition for $\tebl\notin \widehat{S}_\ell$.  Notice that the set $\widehat{S}^c_\ell$ can be decomposed into three non-overlapping sets, namely $\mathrm{R}_1=K_\epsilon^c\times K_\epsilon,$ $\mathrm{R}_2=K_\epsilon\times K_\epsilon^c$ and $\mathrm{R}_3=\widehat{S}_\ell^c\backslash(\mathrm{R}_1\cup \mathrm{R}_2)$, as depicted in Figure \ref{fig:smallset}. Thus, we analyze the case $\tebl\notin\widehat{S}_\ell$ by considering each region $\mathrm{R}_i, i=1,2,3$ individually.  
		\begin{figure}
		\centering
		\includegraphics[width=0.5\linewidth]{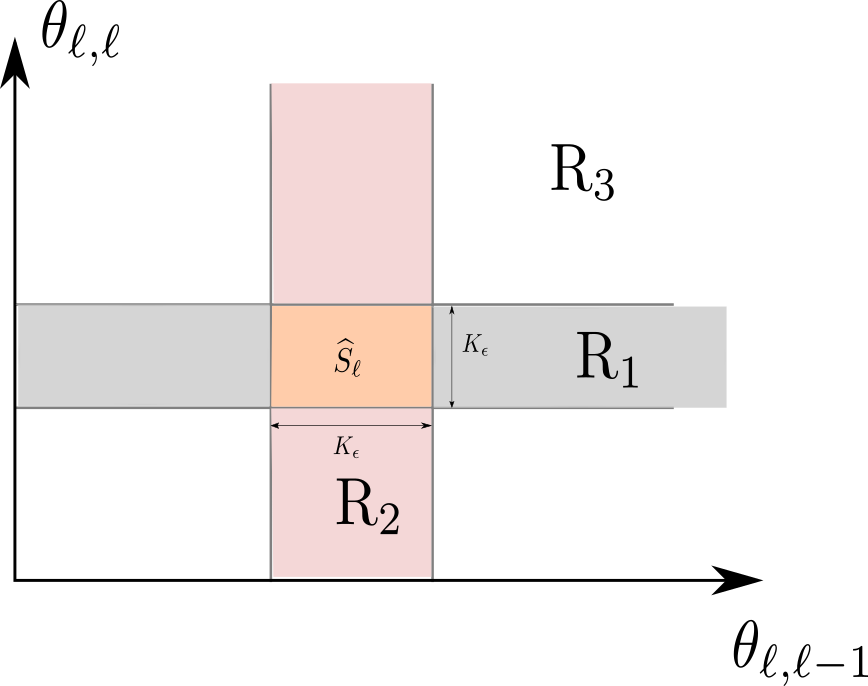}
		\caption{Schematic of the small set $\widehat{S}_\ell=K_\epsilon\times K_\epsilon$, and it's three related regions, $\mathrm{R}_1=K_\epsilon^c\times K_\epsilon,$ $\mathrm{R}_2=K_\epsilon\times K_\epsilon^c$ and $\mathrm{R}_3=\widehat{S}_\ell^c\backslash(\mathrm{R}_1\cup \mathrm{R}_2)$.}
		\label{fig:smallset}
	\end{figure}	
	
	 For $\tebl \in \mathrm{R}_3$, we simply have 
	\begin{align}
(\Pisl \VL) (\tebl)\leq	\lambda V_{\ell-1,\ell}(\tebl).
\end{align}	
Furthermore, $\forall\tebl\in \mathrm{R}_1$ we have:
	\begin{align}
(\Pisl \VL) (\tebl)\leq	\lambda V_{\ell-1,\ell}(\tebl)+\frac{\kappa}{2}&\leq \lambda V_{\ell-1,\ell}(\tebl)+\frac{\kappa}{1+\Gamma}V_{\ell-1,\ell}(\tebl),  \label{eq:first_ineq_drift}\\
&\leq \underbrace{\left(\lambda +\frac{\kappa}{1+\Gamma}\right)}_\text{$=\Lambda<1$ by \eqref{eq:beta_of_gamma}} V_{\ell-1,\ell}(\tebl)
	\end{align}
And similarly for $\tebl \in \mathrm{R}_2$. Thus, given that $\widehat{S}_\ell^c=\mathrm{R}_1\cup \mathrm{R}_2\cup\mathrm{R}_3$, with $\mathrm{R}_1, \mathrm{R}_2,\mathrm{R}_3$ non-overlapping sets, we have that $\forall \tebl\in \widehat{S}_\ell^c$ 
\begin{align}
(\Pisl \VL) (\tebl)\leq \Lambda \VL(\tebl) \label{eq:lambda_hat}.
\end{align}
	On the other hand, when $\tebl\in \widehat{S}_\ell$, we have that 
	\begin{align}
	\Pisl \VL (\tebl)&=\frac{1}{2} P_{\ell,\ell-1}V_{\ell-1}(\tell)+\frac{1}{2}P_{\ell,\ell}V_\ell(\tel)\\
	&\leq\frac{1}{2}\left(\lambda V_{\ell-1}(\tell)+\kappa\right)+\frac{1}{2}\left(\lambda V_{\ell}(\tel)+\kappa\right)\\
	&\leq\Lambda \VL(\tebl)+\kappa -\frac{\kappa}{1+\Gamma}\VL(\tebl)\\
	&\leq\Lambda \VL(\tebl) +\tilde \kappa,
	\end{align}
	where \begin{align}\tilde{\kappa}:=\frac{\Gamma}{1+\Gamma}\kappa.\label{eq:kappa_hat}\end{align} Thus, the joint Markov operator $\Pisl$ satisfies a drift condition of the form \ref{Ass:Strong_drift_exs} with a small set $\widehat{S}_\ell$, and constants given by \eqref{eq:lambda_hat} and \eqref{eq:kappa_hat}. 
\end{proof}

\end{lemma}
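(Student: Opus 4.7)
The plan is to reduce the drift inequality for the joint kernel $\pisl$ to two drift inequalities for the marginal kernels $\plj$, $j = \ell-1, \ell$, by exploiting the separability of a well-chosen bivariate Lyapunov function together with the commuting identity of Lemma \ref{lemma:drift_condition_aux}.

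First, I would construct bounded marginal Lyapunov functions. By Assumption \ref{Ass:essinf} and Theorem \ref{thm:unif_erg}, each $\plj$ is uniformly ergodic, so Theorem \ref{thm:bdd_lyap} already guarantees the existence of a drift inequality with a bounded Lyapunov function and a small set. I would however produce these explicitly from the uniform-tightness result just established: fix $\epsilon \in (0, 1/4)$, pick a compact $K_\epsilon \in \calb(\Te)$ (obtained by unioning the tight sets supplied at thresholds $\epsilon$ and $2\epsilon$) such that $\plj(\theta, K_\epsilon) > 1 - \epsilon$ and with a subset $K_{2\epsilon} \subset K_\epsilon$ on which $\plj(\theta, K_{2\epsilon}) > 1 - 2\epsilon$, uniformly in $\theta$. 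Then set $V_j(\theta) := 1 + (\Gamma - 1)\car{\theta \in K_{2\epsilon}^c}$ for a parameter $\Gamma > 1$ that will be tuned at the end. A direct computation, splitting according to whether $\theta$ lies in $K_\epsilon$ or not, yields the marginal drift $(\Plj V_j)(\theta) \leq \lambda V_j(\theta) + \kappa \car{\theta \in K_\epsilon}$ with $\lambda := (1+(\Gamma-1)2\epsilon)/\Gamma$ and $\kappa := 1 - \lambda + (\Gamma-1)2\epsilon$; note $\lambda < 1$ thanks to the choice $\epsilon < 1/4$.

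Next I would lift to the joint chain. Take the bounded bivariate Lyapunov function $\VL(\tebl) := \tfrac{1}{2}(V_{\ell-1}(\tell) + V_\ell(\tel))$ and the joint candidate small set $\widehat S_\ell := K_\epsilon \times K_\epsilon$, which is compact and therefore small by Lemma \ref{lemma:small_set}. Lemma \ref{lemma:drift_condition_aux} applied separately to each marginal-dependent summand, combined with the linearity of $\Pisl$, gives the key identity $(\Pisl \VL)(\tebl) = \tfrac{1}{2}(P_{\ell,\ell-1} V_{\ell-1})(\tell) + \tfrac{1}{2}(P_{\ell,\ell} V_\ell)(\tel)$. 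I would then partition $\Te^2$ into four regions: the small set $\widehat S_\ell$; the two mixed strips $\mathrm R_1 := K_\epsilon^c \times K_\epsilon$ and $\mathrm R_2 := K_\epsilon \times K_\epsilon^c$; and the ``far'' corner $\mathrm R_3 := K_\epsilon^c \times K_\epsilon^c$. On $\mathrm R_3$ both marginal drifts contract without a $\kappa$ term, yielding $(\Pisl \VL)(\tebl) \leq \lambda \VL(\tebl)$. On $\widehat S_\ell$ both marginal drifts pick up a $\kappa$, so the estimate reads $(\Pisl \VL)(\tebl) \leq \lambda \VL(\tebl) + \kappa$.

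The delicate cases are $\mathrm R_1$ and, symmetrically, $\mathrm R_2$, where one marginal contracts purely while the other contributes an additive $\kappa/2$. To fold this constant back into a drift term I would use that on these strips at least one coordinate sits in $K_\epsilon^c$, so $\VL(\tebl) \geq (\Gamma + 1)/2$, whence $\kappa/2 \leq (\kappa/(\Gamma + 1))\VL(\tebl)$; this produces the effective contraction $(\Pisl \VL)(\tebl) \leq (\lambda + \kappa/(\Gamma+1))\VL(\tebl)$. The main obstacle, and the reason for leaving $\Gamma$ free until this step, is to guarantee that $\Lambda := \lambda + \kappa/(\Gamma+1)$ still satisfies $\Lambda < 1$: since $\lambda \to 2\epsilon$ and $\kappa/(\Gamma+1) \to 2\epsilon$ as $\Gamma \to \infty$ while $\epsilon < 1/4$ is fixed, taking $\Gamma$ large enough does the job. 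Putting the four cases together yields a joint drift of the form $(\Pisl \VL)(\tebl) \leq \Lambda \VL(\tebl) + \tilde\kappa \car{\tebl \in \widehat S_\ell}$ with $\Lambda \in (0,1)$ and a finite $\tilde\kappa$ obtained by absorbing the remaining additive constant on $\widehat S_\ell$, which is exactly the drift condition \eqref{eq:drift_cond} as required.
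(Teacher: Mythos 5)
Your proposal is correct and follows essentially the same route as the paper's proof: the same tightness-based construction of $K_{2\epsilon}\subset K_\epsilon$ and the bounded marginal Lyapunov functions, the same additive bivariate $\VL$ with small set $K_\epsilon\times K_\epsilon$, the same four-region partition, and the same absorption of $\kappa/2$ on the mixed strips via $\VL\geq(\Gamma+1)/2$ followed by tuning $\Gamma$ so that $\Lambda<1$. The only (harmless) cosmetic differences are that you write $\mathrm R_3$ explicitly as $K_\epsilon^c\times K_\epsilon^c$ and fix $V_j=\Gamma$ on all of $K_{2\epsilon}^c$ rather than only on $K_\epsilon^c$.
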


	We now have all the required results to prove Theorem \ref{thm:main_existence_ergo}. 
		\begin{proof}[Proof of Theorem \ref{thm:main_existence_ergo}]
 	The first claim of the theorem (the one pertaining to the existence and uniqueness of the invariant measure) follows immediately from Theorem \ref{thm:existence_uniquenes_bax} and Lemmata \ref{lemma:irreducibility}, \ref{lemma:small_set}, and \ref{lemma:drift_condition}. The second claim (the one pertaining to uniform ergodicity) follows from Theorems \ref{thm:unif_erg}, \ref{thm:bdd_lyap}, and the fact that the bi-variate Lyapunov function $V_{\ell-1,\ell}$ associated to $\pisl$, is bounded.	
 \end{proof}

\begin{remark}Notice that, contrary to many other MCMC methods where one typically constructs the Markov transition kernel in such a way that the algorithm generates samples from a target invariant probability measure (whose Radon-Nikodym derivative with respect to some reference probability measure is accessible up to a normalization constant), the level $\ell$ joint invariant measure $\nu_\ell$ is not known and it is induced by the joint Markov transition kernel. In particular, it  depends on the proposal $Q_\ell$. We aim at further investigating the influence of $Q_\ell$ on the invariant measure in future works. It is worth saying that, by construction, $\nu_\ell$ will in any case have the desired marginals $\muy_\ell,\muy_{\ell-1}$.    
\end{remark}

We have shown that the joint chain generated by the multi-level algorithm with independent proposals (i) has an invariant measure  and (ii) is uniformly ergodic. 

The choice of $Q_\ell$ is delicate for the ML-MCMC algorithm to work. For instance, consider the case $\lev=1$, $\mu^y_0=\mathcal{N}(1,1)$ and $\mu^y_1=\mathcal{N}(\frac{1}{2},1)$. What might look at first sight as a good proposal for the coupled chain at level $(\ell-1,\ell)=(0,1)$ is to take $Q_1=\mu^y_0$, i.e., the (exact) posterior at the previous level. However, this choice of proposal (which is unfeasible in practice, as direct sampling from $\mu^y_{\ell-1}$ is inaccessible) will not lead to a geometrically ergodic chain in view of Theorem \ref{thm:main_existence_ergo}, since $Q_1(z)/\muy_1(z)$ has essential infimum 0. The idea of proposing from the previous level is somehow what is advocated in \cite{dodwell2015hierarchical}, which could work only if $\exists c_1,c_2\in \R_+$ such that $c_1\leq \py_{\ell-1}(z)/\pyl(z)\leq c_2$,   $\forall \te\in \X$ and $\forall\ell$.

Lastly, notice that, by construction, the ML-MCMC Algorithm \ref{algo:MLMH} starts from  a measure $\widehat\lambda^0(A):=\lambda^0(A_\Delta)$, $\lambda^0\ll\mup$, where, for any set $A\in\mathcal{B}(\Te^2)$, we define $A_{\Delta}:=\{z\in \Te:\ (z,z)\in A\}$. We now show that, for any level $\ell=1,2,\dots,\lev$,  $\widehat \lambda^0\ll\nul$.

\begin{theorem}[Absolute continuity of initial measure]\label{Thm:abs_cont} Under the same assumptions as in Theorem \ref{thm:main_existence_ergo}, we have that, for any level $\ell=1,2,\dots,\lev$,  $\widehat \lambda ^0\ll\nul$. 
\begin{proof}
Let $A\in \mathcal{B}(\Te^2)$ be a compact set such that $\nul(A)=0$ (the non-compact set will be shown later). Furthermore, from the tightness of $\nul$, we have that, given some $\epsilon>0$, there exists a compact $K_\epsilon\in\mathcal{B}(\Te^2)$ such that $\nul(K_\epsilon)\geq 1-\epsilon$. We then have that 
\begin{align}
0=\nul(A)&=\int_{\Te^2}\pisl(\tebl,A)\nul(\diff\tebl)\\&\geq \int_{\Te^2}\int_{A_\Delta}\min_j\left\{\min\left\{Q_\ell(z),\frac{\py_j(z)Q_\ell(\te_{\ell,j})}{\py_j(\te_{\ell,j})} \right\}   \right\}\mup(\diff z)\nul(\diff \tebl)\\
&\geq \int_{K_\epsilon}\int_{A_\Delta}\min_j\left\{\min\left\{Q_\ell(z),\frac{\py_j(z)Q_\ell(\te_{\ell,j})}{\py_j(\te_{\ell,j})} \right\}   \right\}\mup(\diff z)\nul(\diff \tebl).\label{eq:bndnula}
\end{align}
By Assumption \ref{Ass:positivity} and compactness of $K_\epsilon$ and $A$, we have that there exists a $c'>0$ such that $c'\leq\min_j\left\{\min\left\{Q_\ell(z),\frac{\py_j(z)Q_\ell(\te_{\ell,j})}{\py_j(\te_{\ell,j})} \right\}   \right\}$, $\forall \tebl \in K_\epsilon$, $\forall z\in A_\Delta$. Then, we get that
\begin{align}
\eqref{eq:bndnula}\geq c'(1-\epsilon) \mup(A_\Delta),
\end{align}
which implies that $\mup(A_\Delta)=0$.  Moreover, since ${\lambda}^0\ll\mup$, we have $ \widehat\lambda^0(A)=\lambda^0(A_\Delta)=0$, and as such, $\widehat{\lambda}^0\ll \nul$. 
Now, suppose $A$ is not compact. 
Since $\hat{\lambda}^0$  is a tight probability measure (c.f. Theorem \ref{thm:billingsley}), then it follows that,  $$\hat{\lambda}^0(A)=\underset{\underset{K\text{ compact}}{K\subset A}}{\sup}\hat{\lambda}^0(K)=0,$$ and we can conclude as in the previous case.
	\end{proof}
\end{theorem}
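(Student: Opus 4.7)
The plan is to show that any $\nu_\ell$-null set $A\in \mathcal{B}(\Te^2)$ is also $\widehat\lambda^0$-null. The key observation is that $\widehat\lambda^0$ is a measure supported on the diagonal of $\Te^2$, so $\widehat\lambda^0(A)$ depends only on the diagonal slice $A_\Delta = \{z\in\Te : (z,z)\in A\}$, and thanks to $\lambda^0\ll\mup$ it suffices to prove $\mup(A_\Delta)=0$.

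First I would handle the case where $A$ is compact. Using invariance of $\nu_\ell$ under $\pisl$, I would write $0=\nu_\ell(A)=\int_{\Te^2}\pisl(\tebl,A)\,\nu_\ell(\diff\tebl)$ and bound the integrand from below by keeping only the first line of the joint kernel \eqref{eq:joint_kernel}, which is the contribution from the configuration $S_1$ (``both chains accept''). This contribution is an integral over $A_\Delta$ with integrand $\min\{\alpha_{\ell-1}(\tell,z),\alpha_\ell(\tel,z)\}Q_\ell(z)$, which by Assumption \ref{Ass:positivity} is strictly positive and continuous in all its arguments. To obtain a positive uniform lower bound, I would invoke tightness of $\nu_\ell$ (via Theorem \ref{thm:billingsley}) to fix, for any $\epsilon>0$, a compact $K_\epsilon\subset\Te^2$ with $\nu_\ell(K_\epsilon)\geq 1-\epsilon$, and restrict the outer integration to $K_\epsilon$. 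Since $K_\epsilon$ and $A$ are both compact, continuity and positivity in Assumption \ref{Ass:positivity} provide a constant $c'>0$ uniformly lower-bounding the integrand. This yields $0=\nu_\ell(A)\geq c'(1-\epsilon)\mup(A_\Delta)$, and since $\epsilon$ is arbitrary we conclude $\mup(A_\Delta)=0$, whence $\widehat\lambda^0(A)=\lambda^0(A_\Delta)=0$.

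For a general measurable $A\in\mathcal{B}(\Te^2)$ with $\nu_\ell(A)=0$, I would appeal to Theorem \ref{thm:billingsley} once more, applied this time to $\widehat\lambda^0$, which is a Borel probability measure on the separable Banach space $\Te^2$ and is therefore tight (equivalently, inner regular on Borel sets). Then $\widehat\lambda^0(A)=\sup\{\widehat\lambda^0(K):K\subset A,\ K\text{ compact}\}$. Every such compact $K$ is also a $\nu_\ell$-null set, so the compact case gives $\widehat\lambda^0(K)=0$, and hence $\widehat\lambda^0(A)=0$ as desired.

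The main obstacle I foresee is obtaining the positive uniform lower bound on the kernel integrand after restricting to $K_\epsilon$: one must verify that continuity plus pointwise positivity on a compact set indeed yields a uniform positive infimum for each of the factors $Q_\ell(z)$, $\py_j(z)$, and the Metropolis ratios $\py_j(z)Q_\ell(\telj)/\py_j(\telj)$, uniformly in $\tebl\in K_\epsilon$ and $z\in A_\Delta$. Once these standard compactness-continuity estimates are in place, the rest of the argument is a direct manipulation of the invariance identity and the inner regularity of probability measures on separable Banach spaces.
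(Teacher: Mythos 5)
Your proposal is correct and follows essentially the same route as the paper's proof: lower-bounding the invariance identity by the ``both accept'' term of the joint kernel, restricting the outer integral to a compact set of large $\nu_\ell$-mass to extract a uniform positive constant, and then handling general Borel sets via inner regularity of $\widehat\lambda^0$. No substantive differences.
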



\subsection{Non-asymptotic bounds on the level-wise ergodic estimator}

Recall that if the Markov operator $P$ defines a map $P:L_q(\calx,\mu)\to L_q(\calx,\mu)$, for some $q\in[1,\infty],$ we can define the norm of $P$ as 
\begin{align}
\lno P\rno_{L_q(\calx,\mu)\to L_q(\calx,\mu)}:=\underset{\underset{\lno f \rno_{L_q(\calx,\mu)}=1}{f\in L_q(\calx,\mu)}}{\sup}\lno P f \rno_{L_q(\calx,\mu)}.
\end{align}
It is well-known that $P$ is always a weak contraction in $L_q(\calx,\mu)$, i.e., $\lno P\rno_{L_q(\calx,\mu)\to L_q(\calx,\mu)}\leq 1$. 
We define the $L_q(\calx,\mu)$-\emph{spectral gap} of $ P: L_q(\calx,\mu)\to  L_q(\calx,\mu)$ as 
\begin{align}\label{eq:spec_gap}
\gamma_q[P]:=1-\lno P\rno_{L_q^0\to L_q^0}.
\end{align}
Whenever $\gamma_q[P]>0$, it can be shown that $\nu^0P^n$ converges to $\mu$ for any $\nu^0\in\mathcal{M}(\X)$ in some appropriate distance for probability measures (see, e.g., \cite{latz2020generalized,rudolf2011explicit}).

We now turn to analyze the level-wise contribution to the ML-MCMC ergodic estimator \eqref{eq:ML-MLMC_estimator}, which we write hereafter in more general terms, including a burn-in phase. 

For $\ell=1,2,\dots,\lev,$ let  $\Q_\ell-\Q_{\ell-1}=:Y_\ell:\Te^2\to \R$ be a $\nul$ square-integrable function, and $\nu^0$ be a probability measure on $(\Te^2,\mathcal{B}(\Te^2))$ such that $\nu^0\ll \nul$. In addition, denote by $	\mathsf{E}_{\nu^0, \bm{P}_\ell}[Y_\ell]$ the expectation of $Y_\ell$ over the Markov chain generated by $\Pisl$, starting from an initial probability measure $\nu^0$, and consider the following ergodic estimator:
\begin{align}\label{eq:erg}
	\hat{Y}_{\ell,N_\ell,n_{b,\ell}}:=\frac{1}{N_\ell}\sum_{n=1}^{N_\ell}Y_\ell(\tebl^{n+n_{b,\ell}}),\quad \tebln\sim \pisl(\tebl^{n-1},\cdot),
\end{align}
where $n_{b,\ell}\in \mathbb{N}$ is the usual \emph{burn-in period}.  The aim of this section is to provide error bounds on the \emph{non-asymptotic statistical Mean Square Error} of \eqref{eq:erg}, namely 
\begin{align}
	\mathrm{MSE}(\hat{Y}_{\ell,N_\ell,n_{b,\ell}};\nu^0):=\mathsf{E}_{\nu^0, \bm{P}_\ell}\left[\left(\hat{Y}_{\ell,N_\ell,n_{b,\ell}}-\nul(Y_\ell)\right)^2\right].
\end{align} 
In particular, we aim at obtaining a bound of the form
\begin{align}\label{eq:mse_bound_can}
	\mathrm{MSE}(\hat{Y}_{\ell,N_\ell,n_{b,\ell}};\nu^0)\leq C_{\mathrm{mse},\ell}\frac{\V_{\nul}[Y_\ell]}{N_\ell},
\end{align}
 for some level-dependent, positive constant $C_{\mathrm{mse},\ell}.$  Such a bound is presented in Theorem \ref{thm:namse}, the main result of this subsection. A bound of the form \eqref{eq:mse_bound_can} will be required for the cost analysis of Section \ref{sec:cost_analysis}. Bounds such as \eqref{eq:mse_bound_can} are known to exist for geometrically ergodic and reversible Markov transition kernels \cite{rudolf2011explicit}. However, it is easy to see that the chain generated by $\Pisl$ is not $\nul$-reversible.
As a consequence, we can not directly apply the non-asymptotic bounds presented in \cite{rudolf2011explicit}.  Instead, inspired by the error analysis of \cite{rudolf2011explicit} and the so-called \emph{pseudo-spectral} approach of \cite{paulin2015concentration}, we construct a bound of the form \eqref{eq:mse_bound_can} for general (i.e., not necessarily multi-level) non-reversible, discrete-time Markov chains. To the best of the authors' knowledge, this result is new.

%
%

For any $q,q'\in[1,\infty]$ with  $\frac{1}{q}+\frac{1}{q'}=1,$ we denote the \emph{adjoint operator} of $\Pisl:L_q(\Te^2,\nul)\to L_q(\Te^2,\nul)$ by $\Psml:L_{q'}(\Te^2,\nul)\to L_{q'}(\Te^2,\nul).$ It can be shown \cite{rudolf2011explicit} that a Markov operator acting on $L_2(\Te^2,\nul)$ is self-adjoint if and only if it is $\nul$-reversible. Notice that, even though $\Pisl$ is not reversible, the multiplicative reversibilization $\bm R_\ell=(\Psml\Pisl)$ is.

Moreover, we define the so-called \emph{pseudo-spectral gap} of the Markov operator $\Pisl: L_2(\Te^2,\nul)\to  L_2(\Te^2,\nul)$ as
\begin{align}\label{eq:psg}
	\gamma_\mathrm{ps}[\Pisl]:=\underset{k\geq 1}{\max}\left \{ \gamma_2[ (\Pisl^*)^k\Pisl^k ]/k \right\},\ k \in \mathbb{N}.
\end{align}

As mentioned in \cite{paulin2015concentration}, the pseudo-spectral gap can be understood as a generalization of the $L_2$-spectral gap of the  so-called \emph{multiplicative reversibilization} $\bm R_\ell=(\Psml\Pisl).$ It is shown in \cite[Proposition 3.4]{paulin2015concentration} that for a uniformly ergodic chain with Markov kernel,  $\bm P_\ell$, $\exists \tau_\ell<\infty$ such that $\gamma_\mathrm{ps}[\bm P_\ell]\geq \frac{1}{2\tau_\ell}>0$, where $\tau_\ell$ is the so-called \textit{mixing time}.

\begin{theorem}[Non-asymptotic bound on the mean square error]\label{thm:namse} 
	Suppose Assumption  \emph{\ref{Ass:positivity}} holds. Furthermore, for any $\ell=1,2,\dots,\lev,$ let $Y_\ell\in  L_2(\Te^2,\nul)$, and write $g_\ell(\tebl)=Y_\ell(\tebl)-\int_{\Te^2}Y_\ell(\tebl)\nul(\diff \tebl)$, and assume the Markov chain generated by $\Pisl$ is started from a measure $\nu^0$ with $\nu^0\ll\nul$, and  $\frac{\diff \nu^0}{\diff \nul}\in L_\infty(\Te^2,\nul).$    Then,
	\begin{align}
		\mathrm{MSE}(\hat{Y}_{\ell,N_\ell,n_{b,\ell}};\nu^0):=\mathsf{E}_{\nu^0, \bm{P}_\ell}\left|\frac{1}{N_\ell}\sum_{n=1}^{N_\ell}g_\ell(\tebl^{n+n_{b,\ell}})\right|^2\leq C_{\mathrm{mse},\ell}\frac{\V_{\nul}[Y_\ell]}{N_\ell},\label{eq:bound_namse}
	\end{align}	
 	where $C_{\mathrm{mse},\ell}=C_{\mathrm{inv},\ell }+C_{\mathrm{ns},\ell },$ with $$C_{\mathrm{inv},\ell }=\left(1+\frac{4}{\gamma_\mathrm{ps}[\bm{P}_\ell]}\right),\quad C_{\mathrm{ns},\ell }=\left(2\lno\frac{\diff \nu^0}{\diff \nul}-1 \rno_{L_\infty}\left(1+\frac{4}{\gamma_{\mathrm{ps}}[\Pisl]}\right)\right),$$ and where $\gamma_{\mathrm{ps}}[ \Pisl]$  is the pseudo-spectral gap of $\Pisl,$ defined in \eqref{eq:psg}.  
\end{theorem}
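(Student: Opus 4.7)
My plan is to bound the MSE by first reducing to the stationary-start case via a change of measure, and then bounding the stationary MSE using the pseudo-spectral gap machinery. For the change-of-measure step I would use that $\nu^0\ll\nul$ with $\diff\nu^0/\diff\nul\in L_\infty(\Te^2,\nul)$, so that for any non-negative path-functional $F$ of the joint chain,
\begin{align}
\mathsf{E}_{\nu^0,\Pisl}[F] \;=\; \mathsf{E}_{\nul,\Pisl}\!\left[\frac{\diff\nu^0}{\diff\nul}(\tebl^0)\,F\right]
\;=\; \mathsf{E}_{\nul,\Pisl}[F] + \mathsf{E}_{\nul,\Pisl}\!\left[\left(\frac{\diff\nu^0}{\diff\nul}-1\right)F\right].
\end{align}
Applying this to the non-negative $F=|\hat{Y}_{\ell,N_\ell,n_{b,\ell}}-\nul(Y_\ell)|^2$ and bounding the last term via the $L_\infty$ norm of the signed density perturbation (splitting the signed integrand into its positive and negative parts and bounding each separately produces the factor $2$) will give $\mathrm{MSE}(\hat Y;\nu^0)\leq \bigl(1+2\|\diff\nu^0/\diff\nul-1\|_{L_\infty}\bigr)\,\mathrm{MSE}(\hat Y;\nul)$.

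The second stage is to bound the stationary MSE. Expanding the square and using stationarity of $\nul$ together with $g_\ell\in L_2^0(\Te^2,\nul)$ yields
\begin{align}
\mathrm{MSE}(\hat Y;\nul) \;=\; \frac{\|g_\ell\|_{L_2(\nul)}^2}{N_\ell} \;+\; \frac{2}{N_\ell^2}\sum_{m=1}^{N_\ell-1}(N_\ell-m)\,\langle g_\ell,\Pisl^m g_\ell\rangle_{\nul}.
\end{align}
To bound the autocovariances without reversibility, I would let $k^\star$ attain the maximum in \eqref{eq:psg}, so that the self-adjoint multiplicative reversibilization $\bm{R}_{k^\star}:=(\Psml)^{k^\star}(\Pisl)^{k^\star}$ has $L_2$-spectral gap at least $k^\star\gamma_\mathrm{ps}[\Pisl]$. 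The identity $\|\Pisl^{k^\star}g_\ell\|_{L_2(\nul)}^2=\langle g_\ell,\bm{R}_{k^\star}g_\ell\rangle_{\nul}$ then yields $\|\Pisl^{k^\star}g_\ell\|_{L_2(\nul)}\leq (1-k^\star\gamma_\mathrm{ps})^{1/2}\|g_\ell\|_{L_2(\nul)}$. Grouping the lags into blocks of size $k^\star$, applying Cauchy--Schwarz to each autocovariance, using the weak contraction $\|\Pisl^r\|_{L_2^0\to L_2^0}\leq 1$ for the intermediate lags, and summing the resulting geometric series will produce $\sum_{m=0}^\infty|\langle g_\ell,\Pisl^m g_\ell\rangle_{\nul}|\leq 2\|g_\ell\|_{L_2(\nul)}^2/\gamma_\mathrm{ps}[\Pisl]$ via the elementary estimate $1-\sqrt{1-x}\geq x/2$ for $x\in[0,1]$. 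Plugging this back gives $\mathrm{MSE}(\hat Y;\nul)\leq (1+4/\gamma_\mathrm{ps}[\Pisl])\V_{\nul}[Y_\ell]/N_\ell=C_{\mathrm{inv},\ell}\V_{\nul}[Y_\ell]/N_\ell$, and combining with the change-of-measure factor produces $C_{\mathrm{mse},\ell}=C_{\mathrm{inv},\ell}+C_{\mathrm{ns},\ell}$ as claimed.

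The main obstacle I anticipate is the non-reversibility of $\Pisl$, which rules out a direct appeal to the cleaner Rudolf-style spectral bounds and forces routing everything through the self-adjoint $\bm{R}_{k^\star}$. Turning the contraction of $\bm{R}_{k^\star}$ into a bound on $\langle g_\ell,\Pisl^m g_\ell\rangle_{\nul}$ for lags $m$ that are not multiples of $k^\star$ is the technically most delicate step, and obtaining the sharp constant $1+4/\gamma_\mathrm{ps}$ rather than a larger one will rely on invoking $1-\sqrt{1-x}\geq x/2$ at just the right moment in the geometric summation. A secondary nuisance is the burn-in: relabeling by $n_{b,\ell}$ effectively replaces $\nu^0$ by $\nu^0\Pisl^{n_{b,\ell}}$, but since $\Pisl$ does not inflate the $L_\infty$ norm of a Radon--Nikodym derivative with respect to its invariant measure $\nul$, the change-of-measure argument will survive the shift unchanged.
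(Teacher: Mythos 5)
Your proposal is correct, and the stationary-start half of it (expanding the square, bounding the autocovariances through the multiplicative reversibilization $(\Psml)^{k^\star}(\Pisl)^{k^\star}$, blocking the lags by $k^\star$, and summing the geometric series with $1-\sqrt{1-x}\geq x/2$) is essentially identical to the paper's Lemma on the MSE at stationarity. Where you genuinely diverge is in handling the non-stationary start: the paper invokes Rudolf's exact decomposition (Proposition 3.29 of \cite{rudolf2011explicit}, restated as an auxiliary lemma), which expresses $\mathrm{MSE}(\cdot;\nu^0)$ as the stationary MSE plus two correction sums involving the functionals $\mathcal{H}^{j+n_b}(g^2)$ and $\mathcal{H}^{j+n_b}(gP^{k-j}g)$, and then bounds each correction term separately via H\"older's inequality, the weak-contraction property $\lno P^{j}-\nuop\rno_{L_1\to L_1}\leq 2$, and a second pass through the pseudo-spectral gap. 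Your route replaces all of that with a single change of measure on path space, $\mathsf{E}_{\nu^0,\Pisl}[F]=\mathsf{E}_{\nul,\Pisl}\bigl[\tfrac{\diff\nu^0}{\diff\nul}(\tebl^0)F\bigr]$, applied to the non-negative functional $F=|\hat{Y}_{\ell,N_\ell,n_{b,\ell}}-\nul(Y_\ell)|^2$; since $F\geq 0$ one even gets the multiplicative bound $\bigl(1+\lno\tfrac{\diff\nu^0}{\diff\nul}-1\rno_{L_\infty}\bigr)\,\mathrm{MSE}(\cdot;\nul)$ with factor $1$ rather than $2$ (your splitting into positive and negative parts is unnecessary for a non-negative integrand), which is at least as sharp as the stated additive constant $C_{\mathrm{inv},\ell}+C_{\mathrm{ns},\ell}$ and implies it immediately. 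What the paper's longer route buys is the term-by-term structure needed for the sharper, burn-in-sensitive bound of Theorem \ref{thm:rudolf}, where the non-stationary contribution decays like $a_\ell^{n_{b,\ell}}/N_\ell$ and is of higher order in $N_\ell$; your one-shot $L_\infty$ bound cannot see that decay. Your closing remark about the burn-in is also correct (and in fact not needed on your route, since the density change is made at time zero): the adjoint of a $\nul$-invariant Markov operator is positivity-preserving and fixes constants, so it does not inflate $\lno\tfrac{\diff\nu^0}{\diff\nul}-1\rno_{L_\infty}$. The only implicit ingredient you should make explicit is that $\gamma_{\mathrm{ps}}[\Pisl]>0$, which follows from the uniform ergodicity established in Theorem \ref{thm:main_existence_ergo} together with \cite[Proposition 3.4]{paulin2015concentration}.
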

The proof of this theorem is technical and presented in the appendix for a general uniformly ergodic (non-reversible) Markov chain. 
Notice that the Assumption $\nu^0\ll\nul$ holds in our setting by Theorem \ref{Thm:abs_cont} for $\nu^0(A)=\lambda^0(A_\Delta)$.

Notice also that, even though the constants $C_{\mathrm{inv},\ell}$, $C_{\mathrm{ns},\ell}$  depend on the level $\ell$, we do not expect them to degenerate as $\ell\to\infty$. In particular, the dependency on the level is given by  two terms, namely  $\gamma_{\mathrm{ps}}[\bm{P}_\ell]$ and  $\lno \frac{\diff \nu^0}{\diff \nul}-1\rno_{L_\infty}.$ For the first term, we expect $\gamma_{\mathrm{ps}}[\Pisl]$ to become smaller and smaller as $\ell\to\infty$  and for it to converge to a limit value $\gamma_{\mathrm{ps}}[\bm{P}_\infty]>0$ (see also the discussion of sychronization of the coupled chains in Section \ref{sec:cost_analysis}).  For the second term $\lno \frac{\diff \nu^0}{\diff \nul}-1\rno_{L_\infty},$ notice that $\nul$ converges to a measure that has all of its mass in the diagonal set of $\mathsf{X}^2$. Since $\nu^0$ is a finite measure on such a diagonal, we also expect that this term remains bounded  as $\ell\to\infty$. We are, however, not able to prove these claims at the moment, reason why we formulate the following assumption.  

\begin{assumption}\label{ass:cmse_ind}
There exist a level independent constant $C_\mathrm{mse}$ such that, for any $\ell=0,1,\dots,$ it holds that $C_{\mathrm{mse},\ell}<C_\mathrm{mse}$.
\end{assumption}
The fact that $C_{\mathrm{mse},\ell}$  does not blow-up as $\ell\to\infty$ is an important requirement on the asymptotic analysis of ML-(MC)MC methods.

We remark that the bound \eqref{eq:bound_namse} should be compared to the one presented in \cite[Theorem 3.34]{rudolf2011explicit}. In particular, that work presents a sharper bound than \eqref{eq:bound_namse},  however, such a bound necessitates more restrictive Assumptions which we list in the next theorem for completeness, whose proof is an easy adaptation of \cite[Theorem 3.34]{rudolf2011explicit} to our setting and its omitted.

\begin{theorem}\label{thm:rudolf}
Suppose that the Assumptions of Theorem \ref{thm:namse} hold. In addition, assume that for any $\ell=1,2,\dots,\lev$:
	\begin{enumerate}[label=\emph{C\arabic*}.,ref=C\arabic*]
		\item\label{R1} ($L_2$-spectral gap) there exists $b_\ell\in(0,1)$ such that $$\lno \Pisl \rno_{L^0_2(\Te^2,\nul)\to L^0_2(\Te^2,\nul)}<b_\ell,$$
		\item\label{R2} ($L_1$-exponential convergence) there exists $\tilde c_\ell\in \R_+,a_\ell\in(0,1)$ such that 
		\begin{align}
		\lno \nu^0 \Pisl^n-\nu_\ell\rno_{L_1(\Te^2,\nul)}:=\lno \frac{\diff (\nu^0 \Pisl^n)}{\diff \nul}-1\rno_{L_1(\Te^2,\nul)}\leq \tilde c_\ell a_\ell^n,
		\end{align} 
	\end{enumerate}	

Then, the non-asymptotic MSE is given by 
\begin{align}\label{eq:rudolf}
\mathsf{E}_{\nu^0, \bm{P}_\ell}\left|\frac{1}{N_\ell}\sum_{n=1}^{N_\ell}g_\ell(\tebl^{n+n_{b,\ell}})\right|^2\leq\frac{\V_{\nul}[Y_\ell]}{N_\ell}\left(\frac{2}{(1-b_\ell)}+ \frac{2 \tilde{c}_\ell \lno \frac{\diff\nu^0}{\diff\nul}-1\rno_{L_\infty} a_\ell^{n_b,\ell}}{N_\ell(1-a_\ell)^2}\right),\label{eq:bound_namse_r}
\end{align}	
where the first term in the parenthesis is associated to the variance contribution to the MSE, while the second term corresponds to the statistical squared bias and is of higher order in $N_\ell$. 
\end{theorem}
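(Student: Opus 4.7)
My plan is to follow the classical variance-plus-correction decomposition used in the proof of \cite[Theorem 3.34]{rudolf2011explicit}, adapted to our non-reversible setting, since neither the $L_2$-spectral gap assumption \ref{R1} nor the $L_1$-convergence assumption \ref{R2} require reversibility.

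First, I would write the MSE as a double sum of auto-covariances,
\begin{align}
\mathsf{E}_{\nu^0, \bm{P}_\ell}\left|\frac{1}{N_\ell}\sum_{n=1}^{N_\ell}g_\ell(\tebl^{n+n_{b,\ell}})\right|^2
=\frac{1}{N_\ell^2}\sum_{n,m=1}^{N_\ell}\mathsf{E}_{\nu^0, \bm{P}_\ell}\left[g_\ell(\tebl^{n+n_{b,\ell}})g_\ell(\tebl^{m+n_{b,\ell}})\right],
\end{align}
and, for $m\geq n$, rewrite the inner expectation using the Markov property as
$\int g_\ell\cdot (\Pisl^{m-n} g_\ell)\,\diff(\nu^0\Pisl^{n+n_{b,\ell}})$. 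I would then add and subtract the corresponding integral against $\nul$, producing a \emph{stationary} piece $\langle g_\ell,\Pisl^{m-n}g_\ell\rangle_{\nul}$ and a \emph{non-stationary correction} depending on $h_{n+n_{b,\ell}}:=\diff(\nu^0\Pisl^{n+n_{b,\ell}})/\diff\nul - 1$.

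Second, for the stationary piece, since $g_\ell\in L_2^0(\Te^2,\nul)$ (its $\nul$-mean is zero by construction), Cauchy--Schwarz combined with assumption \ref{R1} gives $|\langle g_\ell,\Pisl^{k}g_\ell\rangle_{\nul}|\leq b_\ell^k\,\V_{\nul}[Y_\ell]$. Grouping the double sum by the lag $k=|m-n|$ and summing the resulting geometric series, I obtain the bound $\frac{\V_{\nul}[Y_\ell]}{N_\ell}\cdot\frac{2}{1-b_\ell}$, which produces the first term on the right-hand side of \eqref{eq:rudolf}.

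Third, for the non-stationary correction, I would estimate
$\left|\int g_\ell\cdot(\Pisl^{m-n}g_\ell)\,h_{n+n_{b,\ell}}\,\diff\nul\right|$
by $L_\infty$--$L_1$ duality: bound $|g_\ell\cdot\Pisl^{m-n}g_\ell|$ pointwise using the $L_\infty$ control on $\diff\nu^0/\diff\nul - 1$ to get an $L_\infty$ factor, and control $\|h_{n+n_{b,\ell}}\|_{L_1(\nul)}=\|\nu^0\Pisl^{n+n_{b,\ell}}-\nul\|_{L_1(\nul)}\leq \tilde c_\ell a_\ell^{n+n_{b,\ell}}$ via assumption \ref{R2}. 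The factor $a_\ell^{n_{b,\ell}}$ comes out immediately, and the double sum over $(n,m)$ of $a_\ell^n$ factors yields the $(1-a_\ell)^{-2}$ term. Dividing by $N_\ell^2$ gives the second term on the right-hand side of \eqref{eq:rudolf}.

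The one delicate step is pairing the $L_1$ convergence from \ref{R2} with the $L_\infty$ bound on $\diff\nu^0/\diff\nul$ in such a way that $\V_{\nul}[Y_\ell]$ (rather than a larger $L_\infty$ norm of $g_\ell$) appears as the multiplicative factor; this is achieved by using the spectral gap bound $\|\Pisl^{m-n}g_\ell\|_{L_2(\nul)}\leq b_\ell^{m-n}\|g_\ell\|_{L_2(\nul)}$ together with the observation that $h_{n+n_{b,\ell}}=(\diff\nu^0/\diff\nul-1)$ propagated by $\Pisl^{n+n_{b,\ell}}$, so that the $L_\infty$ norm of the initial perturbation is preserved. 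Since all remaining calculations are elementary manipulations of geometric series, the proof reduces to a careful bookkeeping of these two contributions, and I omit the details as they follow verbatim from \cite[Theorem 3.34]{rudolf2011explicit}.
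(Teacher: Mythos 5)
Your overall strategy --- Rudolf's decomposition of the MSE into a stationary variance piece plus a non-stationary correction driven by $\diff\nu^0/\diff\nul-1$ --- is exactly the route the paper intends: it omits the proof entirely, saying only that it is ``an easy adaptation of \cite[Theorem 3.34]{rudolf2011explicit}'', and the same decomposition appears in its Appendix as Lemma \ref{lemma:aux_mse}. Your stationary piece is correct: since $\Pisl$ preserves $L_2^0(\Te^2,\nul)$, Cauchy--Schwarz and \ref{R1} give $|\langle g_\ell,\Pisl^k g_\ell\rangle_{\nul}|\le b_\ell^k\,\V_{\nul}[Y_\ell]$, and summing the geometric series yields $\frac{\V_{\nul}[Y_\ell]}{N_\ell}\cdot\frac{1+b_\ell}{1-b_\ell}\le\frac{2\V_{\nul}[Y_\ell]}{N_\ell(1-b_\ell)}$.

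The gap is in the correction term, and it is precisely the step you flag as ``delicate'' and then dismiss. Writing $h_i:=\diff(\nu^0\Pisl^i)/\diff\nul-1$, you must control $\langle g_\ell\,\Pisl^{k}g_\ell,\,h_{j+n_{b,\ell}}\rangle_{\nul}$ and $\langle g_\ell^2,h_{j+n_{b,\ell}}\rangle_{\nul}$. No single H\"older pairing delivers simultaneously the prefactor $\V_{\nul}[Y_\ell]$, the geometric factor $a_\ell^{\,j+n_{b,\ell}}$, and the $N_\ell^{-2}(1-a_\ell)^{-2}$ scaling: the pairing $\lno g_\ell\Pisl^kg_\ell\rno_{L_1}\lno h_i\rno_{L_\infty}$ produces the variance (because $\lno g_\ell\Pisl^kg_\ell\rno_{L_1}\le b_\ell^k\V_{\nul}[Y_\ell]$) but no decay in $i$ --- the ``preservation'' of the $L_\infty$ norm that you invoke gives only $\lno h_i\rno_{L_\infty}\le\lno h_0\rno_{L_\infty}$, which reproduces the weaker $O(N_\ell^{-1})$ bound of Theorem \ref{thm:namse}, not \eqref{eq:rudolf}; the opposite pairing $\lno g_\ell\Pisl^kg_\ell\rno_{L_\infty}\lno h_i\rno_{L_1}$ exploits \ref{R2} and gives the decay $a_\ell^i$, but replaces the variance by a sup-norm of $Y_\ell$ that is nowhere assumed finite; and the intermediate $L_2$--$L_2$ pairing requires fourth moments of $Y_\ell$ and, interpolating $\lno h_i\rno_{L_2}$ between \ref{R2} and the $L_\infty$ bound, yields only the rate $a_\ell^{i/2}$. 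In short, the $L_1$ decay of the propagated density error and the $L_1$ bound on $g_\ell\Pisl^kg_\ell$ sit on the same side of the duality, so asserting that the bookkeeping ``follows verbatim'' from Rudolf hides the one point where the adaptation to the non-reversible setting is non-trivial (Rudolf propagates the density perturbation in $L_2$ through the adjoint, $\lno h_i\rno_{L_2}\le b_\ell^{\,i}\lno h_0\rno_{L_2}$ via $\Psml$, and one must then track which norm of $g_\ell$ appears --- it is not $\V_{\nul}[Y_\ell]^{1/2}$ without further assumptions). To complete the argument you must either exhibit the precise chain of inequalities deriving \eqref{eq:rudolf} from \ref{R1}--\ref{R2} alone, or identify the additional integrability of $Y_\ell$ that it requires.
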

In general, the stronger  Assumptions \ref{R1} and \ref{R2} are known to hold for Markov chains which are both reversible and geometrically ergodic. However, due to its construction, the Markov transition kernel $\Pisl$ of the ML-MCMC algorithm is not reversible. Nevertheless,  we believe that the algorithm presented herein satisfies Assumptions \ref{R1} and \ref{R2} and as such, a bound on the MSE of the form \eqref{eq:rudolf}, should hold. However, we are currently unable to verify this claim either, and we will restrict to the bound of theorem \ref{thm:namse} and the less restrictive Assumption \ref{ass:cmse_ind}. Further examination of the convergence properties of ML-MCMC algorithms will be the topic of future work.

\section{Cost analysis of the ML-MCMC algorithm}\label{sec:cost_analysis}
 For $\ell=0,1,\dots,\lev,$ let $\qoi_\ell:\Te\mapsto \R$ be a $\myl$-integrable quantity of interest. The \emph{Total Mean Square Error} of the multi-level estimator \eqref{eq:ML-MLMC_estimator} is given by
 \begin{align}
\hat{\mathrm{e}}_\mathrm{ML}(\mlQ)&:=\E\left[\left(\mlQ -\E_{\my}[\qoi]\right)^2\right]\label{eq:MSE},
\end{align}
where we have denoted expectation  over the whole multi-level Markov chain by $\E$, i.e., without a subscript. We remark that the estimator $\mlQ$ will also depend on $\{ \Pisl\}_{\ell=1}^\lev,$ as well as the burn-in and initial measure for each level; however, for the sake of readability, we opted  not to explicitly write these dependencies throughout this section.  Notice that the previous term can be upper bounded by 

 \begin{align}
&\hat{\mathrm{e}}_\mathrm{ML}(\mlQ)= \V[\mlQ]+\left[  \E\left[\mlQ\right] -\E_{\my}[\qoi]\right]^2\\
&\leq \underbrace{\V[\mlQ]}_\text{Variance contr.}+2\underbrace{\left[\E\left[\mlQ\right] -\E_{\my_\lev}[\qoi]\right]^2}_\text{MCMC bias contr.}+2\underbrace{\left[\E_{\my_\lev}[\qoi]-\E_{\my}[\qoi]\right]^2}_\text{Discretization contr.}.\label{eq:1st_bound}
\end{align}
where we have denoted by $\V$ the variance  over the whole multi-level Markov chain. Furthermore, from Cauchy-Schwarz, we have that \begin{align}
\V[\mlQ]&\leq 2(\lev+1)\sum_{\ell=0}^{\lev}\V_{\nul}[\hat Y_\ell],
\end{align} and \begin{align}
2\left[\E\left(\mlQ\right) -\E_{\myl}[\qoi]\right]^2 &\leq 2(\lev +1)\sum_{\ell=0}^\lev \left(\E[\hat{Y}_\ell]-\E_{\my_\ell}[{Y}_\ell]\right)^2.
\end{align}
Thus, from \eqref{eq:bound_namse} and \eqref{eq:1st_bound} we obtain 
\begin{align}
&\hat{\mathrm{e}}_\mathrm{ML}(\mlQ)	\leq\\ &\underbrace{2(\lev+1)\sum_{\ell=0}^{\lev}\mathrm{MSE}(\hat{Y}_\ell)}_\text{Total statistical error}+\underbrace{2\left[\E_{\my_\lev}[\qoi]-\E_{\my}[\qoi]\right]^2}_\text{Discretization error}=:\mathrm{e}_\mathrm{ML}(\mlQ)\label{eq:MSEb}.
\end{align}
For some tolerance $\tol>0,$ we denote the minimal computational cost required to obtain $\mathrm{e}_\mathrm{ML}(\mlQ)\leq \tol^2$ by $\mathcal{C}\left(\mathrm{e}_{\mathrm{ML}}\left(\widehat{\Q}_{\lev,\{N_{\ell}\}_{\ell=0}^\lev}\right),\tol^2\right)$. The focus of this section is to provide upper bounds on this computational cost, while at the same time quantifying the computational advantage of the multi-level MCMC method  over its single level counter part (at level $\lev$). In particular, our result can be though of as an extension of \cite[Theorem 3.4]{dodwell2015hierarchical}.   The main result of this section is presented in Theorem \ref{thm:cost_thm_schichl_1}. To establish a cost-tolerance relation, we first need to make assumptions on the decay of the discretization error, and corresponding increase in computational cost for the evaluation of $\eff_\ell$ introduced in Section \ref{ss:bip} as a function of the discretization parameter $M_\ell=s^\ell M_0$.

\begin{assumption}\label{Assumptions_phi} For any $\ell\geq 0$, the following hold:

	\begin{enumerate}[label={\emph{{\ref{Assumptions_phi}.\arabic*}}}.,ref={\ref{Assumptions_phi}.\arabic*}]

	\item \label{Assumptions_phi1} There exist positive functions $C_\eff, C_\Phi:\Te\to\R_+$ independent of $\ell$, and positive  constants $C_\mathrm{e},\alpha$ independent of $\te$ and $\ell$ such that
\begin{enumerate}
		\item 	\label{eq:conv_u}
		$\lno \eff_\ell(\te) -\eff(\te)\rno_\mathsf{Y}\leq C_{\eff}(\te)s^{-\alpha \ell}, \ \forall \te\in \Te.
	$
		\item \label{Assumptions_phi3} 
		$\label{eq:pot_esta}
		\lv \poth-\pot\rv\leq C_\Phi(\te) \lno \eff_\ell(\te) -\eff(\te)\rno_\mathsf{Y},\ \forall \te\in \Te,
$
    \item $\int_\X \exp(C_\eff(\te)C_\Phi(\te))\mup(\diff \te)\leq C_\mathrm{e}<\infty.$
	\end{enumerate}
\item \label{Assumptions_phi2} Given a $\myl$-integrable quantity of interest $\Q_\ell$, there exits a  function $C_q:\Te\to\R_+$ independent of $\ell$ and positive constants $\tilde C_q,\alpha_q,C_\mathrm{m},$ and $m>2$, independent of $\te$ and $\ell$ such that
\begin{enumerate}
	\item $\lvert \Q_\ell(\te)-\Q(\te) \rvert\leq C_{q}(\te)s^{-\alpha_q \ell},\ \forall \te\in \Te. $
	\item $\int_\X C_q^2(\te)\mup(\diff \te)\leq\tilde C_q^2<\infty.$
	\item \label{Assumptions_phi5} $\left(\int_\X |\qoi_\ell(\te)|^m\mup(\diff\te)\right)^{1/m}\leq C_\mathrm{m}<\infty.$ \label{ass:bdd_2nd}
\end{enumerate}
			
		\item \label{ass:cost}There exist positive constants $\gamma$ and $C_{\mathcal{\gamma}},$ such that, for each discretization level $\ell$, the computational cost of obtaining one sample from a $\myl$-integrable quantity of interest $\qoi_\ell(\tel),\ \tel\sim\myl$, with $\te_{\ell,\ell}$ generated by Algorithm \ref{algo:2-IS_coupling}, denoted by  $\mathcal{C}_\ell(\Q_\ell),$  scales as $$\mathcal{C}_\ell(\Q_\ell)\leq C_{\gamma}s^{\gamma \ell }.$$


	\end{enumerate}
	
\end{assumption}

\remark{Notice that, with a slight abuse of notation, we have used the symbol $\alpha$ to denote the (strong) rate in \ref{Assumptions_phi1}, and $\alpha_\ell(\cdot,\cdot)$ to denote acceptance probability at level $\ell$.  We hope this does not create any confusion. }

For all $\ell=1,2,\dots,\lev$, and for a $\my_j$-integrable quantity of interest $\qoi_j$, $j=\ell-1,\ell$, we write
\begin{align}
Y_\ell(\tebl)&:=\qoi_\ell(\tel)-\qoi_{\ell-1}(\tell), \ \tebl=(\tell,\tel)\in\Te^2,\\
\hat{Y}_{\ell,N_\ell}&:=\frac{1}{N_\ell}\sum_{n=1}^{N_\ell}Y_\ell(\tebl^{n+n_{b,\ell}}),\quad \tebln\sim \pisl(\tebl^{n-1},\cdot),\ n_{b,\ell}\in\mathbb{N}.
\end{align}  We state the main result of this section. 

\begin{theorem}[Decay of errors]\label{thm:cost_thm_schichl_1} For any $\ell=0,1,\dots,\lev$, let $\qoi_\ell$ be an $L_1(\Te,\myl)$-integrable quantity of interest and  suppose Assumptions \emph{\ref{Ass:positivity}}, \emph{\ref{ass:cmse_ind}}, and  \emph{\ref{Assumptions_phi}} hold.  Then, there exist positive constants $C_w,C_v,C_\mathrm{mse}$, independent of $\ell$ such that:
	\begin{enumerate}[label=\emph{{T\arabic*}}.,ref=T\arabic*]
	\item\label{Assumption:for_cost_thm1}\emph{\textrm{(Weak convergence)}} $\lv \E_{\myl}[\Q_{\ell}]-\E_{\my}[\Q]\rv \leq C_w s^{-\alpha_w\ell  },$
	\item\label{Assumption:for_cost_thm2}\emph{\textrm{(Strong convergence)}} $\V_{\nul}[Y_\ell]\leq C_v s^{-\beta \ell }.$
	\item\label{Assumption:for_cost_thm3}\emph{\textrm{(MSE bound)}} $\mathrm{MSE}(\hat{Y}_{\ell,N_\ell})\leq N_\ell^{-1}C_\mathrm{mse}\V_{\nul}[Y_\ell].$
\end{enumerate}	
Here, $\alpha_w=\min\{\alpha_q,\alpha\}$ and $\beta=\min\{2a_q,\alpha(1-2/m)\},$  with $\alpha,\alpha_q,$ and $m$  as in \emph{Assumption \ref{Assumptions_phi}}. 
\end{theorem}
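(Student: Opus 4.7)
The plan is to handle the three claims in sequence. The bound on the mean square error (T3) follows immediately from Theorem \ref{thm:namse} applied at each level $\ell$, together with Assumption \ref{ass:cmse_ind} which provides the uniform-in-$\ell$ constant $C_\mathrm{mse}:=\sup_{\ell\geq 0}C_{\mathrm{mse},\ell}<\infty$.

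For the weak convergence (T1), I would split by the triangle inequality
\begin{align*}
|\E_{\myl}[\Q_\ell]-\E_{\my}[\Q]|\leq|\E_{\myl}[\Q_\ell - \Q]|+|\E_{\myl}[\Q] - \E_{\my}[\Q]|.
\end{align*}
The first term is bounded by $s^{-\alpha_q\ell}\int C_q(\te)\pyl(\te)\mup(\diff\te)$ via Assumption \ref{Assumptions_phi2}(a); using that $\pyl$ is uniformly bounded above (since $\Phi_\ell\geq 0$ and $Z_\ell$ is bounded away from zero thanks to Assumption \ref{Assumptions_phi1}(c)) together with Assumption \ref{Assumptions_phi2}(b) and Cauchy--Schwarz, this yields rate $s^{-\alpha_q\ell}$. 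The second term is rewritten as $\int \Q(\pyl-\py)\mup$; expanding both densities in terms of their normalization constants and applying the elementary estimate $|e^{-\Phi_\ell}-e^{-\Phi}|\leq|\Phi_\ell-\Phi|$ (since both potentials are non-negative) together with Assumption \ref{Assumptions_phi1}(a)--(c), one obtains rate $s^{-\alpha\ell}$. Combining yields $\alpha_w=\min\{\alpha_q,\alpha\}$.

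For the strong convergence (T2), the plan is to bound $\V_{\nul}[Y_\ell]\leq\E_{\nul}[Y_\ell^2]$ and use the additive decomposition
\begin{align*}
Y_\ell=[\Q_\ell(\tel)-\Q(\tel)]+[\Q(\tel)-\Q(\tell)]+[\Q(\tell)-\Q_{\ell-1}(\tell)],
\end{align*}
together with $(a+b+c)^2\leq3(a^2+b^2+c^2)$. The outer two summands collapse to marginal expectations under $\myl$ and $\myll$ respectively and are $O(s^{-2\alpha_q\ell})$ by Assumption \ref{Assumptions_phi2}(a)--(b). The middle cross term $\E_{\nul}[(\Q(\tel)-\Q(\tell))^2]$ vanishes on the diagonal $\{\tel=\tell\}$, so I would restrict to the off-diagonal event and apply H\"older's inequality with conjugate exponents $m/2$ and $m/(m-2)$:
\begin{align*}
\E_{\nul}\bigl[\Q(\tel)^2\car{\tel\neq\tell}\bigr]\leq\bigl(\E_{\myl}[|\Q|^m]\bigr)^{2/m}\,\nul(\{\tel\neq\tell\})^{(m-2)/m},
\end{align*}
and symmetrically for the $\Q(\tell)^2$ term. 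The $L^m$-norm factor is uniformly bounded by Assumption \ref{ass:bdd_2nd} combined with the uniform bound on the densities, and the off-diagonal mass is to be bounded by $\nul(\{\tel\neq\tell\})=O(s^{-\alpha\ell})$. This last bound follows from the fact that the one-step separation probability from the diagonal equals $\int|\alpha_{\ell-1}(\te,z)-\alpha_\ell(\te,z)|\ql(z)\mup(\diff z)$, which inherits rate $O(s^{-\alpha\ell})$ from $|\Phi_\ell-\Phi_{\ell-1}|=O(s^{-\alpha\ell})$ via Assumption \ref{Assumptions_phi1}(a)--(b); balancing fluxes at stationarity, and using that the return-to-diagonal rate stays bounded below by a positive constant thanks to Assumption \ref{Ass:essinf} and the synchronization structure of the IMH coupling, transfers this transient rate into the announced equilibrium bound. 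Combining gives the middle term at rate $s^{-\alpha\ell(1-2/m)}$, and the minimum over the three contributions delivers $\beta=\min\{2\alpha_q,\alpha(1-2/m)\}$.

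The main obstacle is the off-diagonal estimate $\nul(\{\tel\neq\tell\})=O(s^{-\alpha\ell})$. Unlike the marginal bounds, this quantity depends on the full joint invariant measure, which is only implicitly defined through the coupling kernel $\pisl$. Transferring the one-step separation rate to a stationary off-diagonal mass requires invoking the uniform ergodicity of $\Pisl$ established in Theorem \ref{thm:main_existence_ergo} together with the explicit structure of the IMH coupling; without a uniform lower bound on the return-to-diagonal rate, the H\"older route used above would not deliver the $(1-2/m)$ factor in the exponent.
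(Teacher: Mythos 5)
Your proposal is correct and follows essentially the same route as the paper: T3 from Theorem \ref{thm:namse} plus Assumption \ref{ass:cmse_ind}, T1 via the same triangle-inequality split with the density perturbation bound, and T2 via H\"older with exponents $m/2$ and $m/(m-2)$ against the stationary off-diagonal mass, which the paper likewise controls by the one-step desynchronization rate $O(s^{-\alpha\ell})$ combined with a stationarity balance and the uniform lower bound on $\int\min\{\pyl,\pyll\}\,\mup(\diff\te)$ from Assumption \ref{Ass:essinf} (Lemmata \ref{lemma:desync}--\ref{lemma:prob_bnd}). The only cosmetic difference is your three-term decomposition of $Y_\ell$ through the exact $\Q$, where the paper uses the two-term split $[\Q_\ell(\tel)-\Q_\ell(\tell)]+[\Q_\ell(\tell)-\Q_{\ell-1}(\tell)]$; both yield the same rates.
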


The proof of Theorem \ref{thm:cost_thm_schichl_1} is presented in Section \ref{ss:proof_cost}. It has been shown in \cite[Theorem 3.4]{dodwell2015hierarchical} that, if a ML-MCMC algorithm satisfies conditions \ref{Assumption:for_cost_thm1}-\ref{Assumption:for_cost_thm3}, then it has a complexity (cost-tolerance relation) analogous to a standard MLMC algorithm to compute expectations (when independent sampling from the underlying probability measure is possible) up to logarithmic terms. This result is stated in Theorem \ref{thm:cost_thm_schichl_2} below. The purpose of Theorem \ref{thm:cost_thm_schichl_1} is to show that our class of ML-MCMC algorithms does actually fulfill conditions \ref{Assumption:for_cost_thm1}-\ref{Assumption:for_cost_thm3}. 

\remark{Throughout this work, we have the tacit assumption that the chain at level 0, i.e., the one that does not require an IMH sampler, is geometrically ergodic with respect to $\my_0$.  }

\begin{theorem}\emph{(\cite[Theorem 3.4]{dodwell2015hierarchical})}\label{thm:cost_thm_schichl_2} Under the same assumptions as in Theorem \ref{thm:cost_thm_schichl_1}, with $\alpha_w\geq\frac{1}{2}\min\{\gamma,\beta\}$, for any $\tol>0$ there exist  a number of levels $\lev=\lev(\tol),$ a decreasing sequence of integers $\{N_\ell(\tol)\}_{\ell=0}^\lev$, and a positive constant $C_\mathrm{ML}$ independent of $\tol$, such that the  \emph{MSE} bound of the multilevel estimator, $\mathrm{e}_\mathrm{ML}(\widehat{\Q}_{\lev,\{N_{\ell}\}_{\ell=0}^\lev})$, satisfies
	
	$$\mathrm{e}_\mathrm{ML}\left(\widehat{\Q}_{\lev,\{N_{\ell}\}_{\ell=0}^\lev}\right)\leq\tol^2,$$
	whereas, the corresponding total ML-MCMC cost is bounded by  \begin{align}\label{eq:cost_ml_bound}
	\mathcal{C}\left(\mathrm{e}_{\mathrm{ML}}\left(\widehat{\Q}_{\lev,\{N_{\ell}\}_{\ell=0}^\lev}\right),\tol^2\right)\leq C_\mathrm{ML}\begin{cases}
	\tol^{-2}|\log\tol| & if \ \beta>\gamma,\\
	\tol^{-2}|\log\tol|^3, & if \ \beta=\gamma,\\
	\tol^{-2+(\gamma-\beta)/\alpha_w}|\log\tol|, & if \ \beta<\gamma.\\
	\end{cases}
	\end{align}

\begin{proof}
	Just as in \cite{dodwell2015hierarchical}, the proof of this theorem follows from \eqref{eq:MSEb} and the proof of \cite[Theorem 1]{cliffe2011multilevel}
	\end{proof}

\end{theorem}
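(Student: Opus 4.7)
The plan is to follow the classical MLMC complexity argument of Cliffe--Giles--Scheichl--Teckentrup, adapted to the present multilevel MCMC setting where the conditions \ref{Assumption:for_cost_thm1}--\ref{Assumption:for_cost_thm3} play the role of the standard weak error/level variance/MSE bound triple. Starting from the splitting
\begin{align}
\mathrm{e}_\mathrm{ML}(\mlQ) \;\leq\; 2(\lev+1)\sum_{\ell=0}^{\lev}\mathrm{MSE}(\hat{Y}_{\ell,N_\ell}) + 2\bigl[\E_{\my_\lev}[\qoi]-\E_{\my}[\qoi]\bigr]^2
\end{align}
from \eqref{eq:MSEb}, I would enforce the target $\mathrm{e}_\mathrm{ML}(\mlQ)\leq \tol^2$ by requiring each of the two summands to be at most $\tol^2/2$ separately. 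The discretization (bias) piece is handled by \ref{Assumption:for_cost_thm1}: choose $\lev$ as the smallest integer such that $2C_w^2 s^{-2\alpha_w \lev} \leq \tol^2/2$. This yields $\lev(\tol) = \lceil \alpha_w^{-1}(\log s)^{-1}\log(2C_w/\tol)\rceil$, so $\lev = \mathcal{O}(|\log \tol|)$, and in particular $s^{-\alpha_w \lev}\simeq \tol$.

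Next, using \ref{Assumption:for_cost_thm2} and \ref{Assumption:for_cost_thm3} to bound
\begin{align}
2(\lev+1)\sum_{\ell=0}^{\lev}\mathrm{MSE}(\hat{Y}_{\ell,N_\ell}) \;\leq\; 2(\lev+1)C_\mathrm{mse}C_v \sum_{\ell=0}^{\lev}\frac{s^{-\beta\ell}}{N_\ell},
\end{align}
I would choose $\{N_\ell\}$ so that the right-hand side is at most $\tol^2/2$ while minimizing the total cost $\sum_{\ell=0}^{\lev} N_\ell\,C_\gamma s^{\gamma\ell}$ given by Assumption \ref{ass:cost}. A standard Lagrange multiplier calculation (treating $N_\ell$ as real and rounding up) prescribes
\begin{align}
N_\ell(\tol) \;=\; \Bigl\lceil \lambda(\tol)\,s^{-(\beta+\gamma)\ell/2} \Bigr\rceil,
\qquad
\lambda(\tol) \;\simeq\; \frac{4(\lev+1)C_\mathrm{mse}C_v}{\tol^2}\sum_{k=0}^{\lev} s^{(\gamma-\beta)k/2},
\end{align}
so that the variance constraint is met with equality up to constants. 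Substituting into the cost functional gives
\begin{align}
\mathcal{C} \;\lesssim\; \frac{\lev+1}{\tol^2}\left(\sum_{\ell=0}^{\lev}s^{(\gamma-\beta)\ell/2}\right)^{2},
\end{align}
plus a lower-order term coming from the rounding $\lceil\cdot\rceil$, which contributes $\mathcal{O}(\sum_\ell s^{\gamma\ell})$ and is dominated by the first term precisely when $\alpha_w \geq \tfrac12\min\{\gamma,\beta\}$ (this is exactly where that hypothesis is used).

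The three-case bound then falls out by evaluating the geometric sum $\sum_{\ell=0}^{\lev} s^{(\gamma-\beta)\ell/2}$: if $\beta>\gamma$ it is $\mathcal{O}(1)$; if $\beta=\gamma$ it is $\mathcal{O}(\lev)=\mathcal{O}(|\log\tol|)$; if $\beta<\gamma$ it is $\mathcal{O}(s^{(\gamma-\beta)\lev/2})=\mathcal{O}(\tol^{-(\gamma-\beta)/(2\alpha_w)})$. Multiplying each by $(\lev+1)\tol^{-2} = \mathcal{O}(\tol^{-2}|\log\tol|)$ recovers the three regimes in \eqref{eq:cost_ml_bound}. The only step that requires care, and the one I expect to be the main obstacle in a clean write-up, is verifying that the integer rounding in $N_\ell$ does not dominate the cost in the $\beta<\gamma$ regime; this is precisely where the hypothesis $\alpha_w\geq\tfrac12\min\{\gamma,\beta\}$ enters to ensure that the extra $\sum_\ell s^{\gamma\ell}\lesssim s^{\gamma\lev}\lesssim \tol^{-\gamma/\alpha_w}$ term is absorbed by the leading optimized cost. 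As indicated in the statement, once \ref{Assumption:for_cost_thm1}--\ref{Assumption:for_cost_thm3} are established, the remainder of the argument is a direct reuse of the proof of \cite[Theorem~1]{cliffe2011multilevel}.
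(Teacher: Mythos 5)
Your proposal is correct and takes essentially the same route as the paper, whose proof simply invokes the error splitting \eqref{eq:MSEb} together with the proof of \cite[Theorem 1]{cliffe2011multilevel}; you have written out in full the standard MLMC sample-allocation argument that this citation encapsulates, including the correct role of the hypothesis $\alpha_w\geq\tfrac12\min\{\gamma,\beta\}$ in absorbing the integer-rounding contribution and the extra $(\lev+1)$ factor that produces the additional $|\log\tol|$ relative to the classical i.i.d.\ MLMC rates. Note only that in the case $\beta<\gamma$ your computation yields the exponent $-2-(\gamma-\beta)/\alpha_w$, which agrees with the cited reference; the sign appearing in \eqref{eq:cost_ml_bound} seems to be a typo rather than a flaw in your argument.
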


We remark that the rates in Theorem \ref{thm:cost_thm_schichl_2} are independent of the dimension of $\X$. It is shown also in \cite{dodwell2015hierarchical} that the cost of obtaining an equivalent single-level (at level $\lev$) mean square error of an  estimator $\widehat{\Q}_{N}$ based on a single-level Markov chain Monte Carlo algorithm (e.g., standard Metropolis-Hastings) (denoted by $\mathrm{e}_\mathrm{SL}),$ generated by a reversible and geometrically ergodic Markov kernel is given by
	\begin{align}
	\mathcal{C}\left(\mathrm{e}_{\mathrm{SL}}\left(\widehat{\Q}_N\right),\tol^2\right)\leq C_\mathrm{SL}\tol^{-2-\gamma/\alpha_w}, \quad C_\mathrm{SL}\in\R_+,
	\end{align}
	where $\alpha_w$ and $\gamma$ are the same constants as in Theorem \ref{thm:cost_thm_schichl_1}, and $C_\mathrm{SL}$ is a positive constant independent of $\tol$. 
\subsection{Proof of Theorem \ref{thm:cost_thm_schichl_1}}\label{ss:proof_cost}
We will decompose the proof of Theorem \ref{thm:cost_thm_schichl_1} in a series of auxiliary results. Notice that \ref{Assumption:for_cost_thm3} is obtained from Theorem \ref{thm:namse} with a level dependent constant and we postulated in Assumption \ref{ass:cmse_ind} that  this constant can be bounded by a finite, level-independent constant $C_\mathrm{mse}$, and as such, we can use it in \ref{Assumption:for_cost_thm3}. Thus, we just need to prove that  \ref{Assumption:for_cost_thm1} and \ref{Assumption:for_cost_thm2} hold. This is done in Lemmata \ref{lemma:verification:1} and \ref{lemma:verification:2}. We begin by proving some auxiliary results needed to prove implication \ref{Assumption:for_cost_thm1}.
%
%

\begin{lemma}\label{lemma:bound_constant}
Suppose \emph{Assumption \ref{Assumptions_phi}} holds. Then, for  $\ell=1,2,\dots,\lev$ it holds\begin{align}
c_I\leq Z_\ell\leq C_e,\end{align}
where $c_I=\int_\X\exp(-\Phi(\te;y)-C_\eff(\te)C_\Phi(\te))\mup(\diff\te)$ and $C_e$ as in \emph{Assumption \ref{Assumptions_phi}}.
\begin{proof}
From Assumptions \ref{Assumptions_phi1} one has that, for all $\ell\geq 0$, and $\te \in\Te$, \begin{align}
\Phi(\te;y)-C_\Phi(\te) C_{\eff}(\te)\leq \Phi_{\ell}(\te;y)\leq \Phi(\te;y)+C_\Phi(\te) C_\eff(\te).
\end{align}
Hence,  
\begin{align}
Z_\ell&=\int_\Te \exp(-\Phi_\ell(\te;y))\mup(\diff\te)\leq \int_\Te \exp\left(-(\Phi(\te;y)-C_\Phi(\te) C_\eff(\te))\right)\mup(\diff\te)\\&\leq \int_\X \exp(C_\Phi(\te) C_\eff(\te))\mup(\diff\te)=C_e,
\end{align}
where the last step follows from the assumption of non-negativity of $\Phi(\theta;y)$ (c.f. \eqref{eq:potential}).
Similarly, $Z_\ell\geq \int_\X\exp(-\Phi(\te;y)-C_\Phi(\te) C_\eff(\te))\mup(\diff \te)=c_I,$ independently of $\ell$. 
\end{proof}
\end{lemma}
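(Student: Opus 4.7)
The plan is to use the pointwise control on $|\Phi_\ell - \Phi|$ provided by Assumption \ref{Assumptions_phi} to sandwich the normalization constant $Z_\ell$ between two $\ell$-independent quantities. The key observation is that chaining \ref{Assumptions_phi3} with \ref{eq:conv_u} produces, for every $\te \in \Te$ and every $\ell \geq 0$,
\begin{align}
|\Phi_\ell(\te;y) - \Phi(\te;y)| \leq C_\Phi(\te)\,\|\eff_\ell(\te) - \eff(\te)\|_\mathsf{Y} \leq C_\Phi(\te)\,C_\eff(\te)\,s^{-\alpha \ell} \leq C_\Phi(\te)\,C_\eff(\te),
\end{align}
since $s > 1$ and $\alpha,\ell \geq 0$. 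This is equivalent to the two-sided bound $\Phi - C_\Phi C_\eff \leq \Phi_\ell \leq \Phi + C_\Phi C_\eff$, which is uniform in $\ell$.

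For the upper bound on $Z_\ell$, I would insert the lower pointwise estimate on $\Phi_\ell$ into the definition of $Z_\ell$, then use the non-negativity of $\Phi$ (built into \eqref{eq:potential} via the scaling by $\mathsf{A}$) to discard the $-\Phi$ term inside the exponential:
\begin{align}
Z_\ell = \int_\Te e^{-\Phi_\ell(\te;y)}\,\mup(\diff\te) \leq \int_\Te e^{-\Phi(\te;y) + C_\Phi(\te) C_\eff(\te)}\,\mup(\diff\te) \leq \int_\Te e^{C_\Phi(\te) C_\eff(\te)}\,\mup(\diff\te) \leq C_e,
\end{align}
where the last inequality is exactly Assumption \ref{Assumptions_phi1}(c). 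For the lower bound, I would similarly substitute the upper pointwise estimate, obtaining directly
\begin{align}
Z_\ell \geq \int_\Te e^{-\Phi(\te;y) - C_\Phi(\te) C_\eff(\te)}\,\mup(\diff\te) = c_I,
\end{align}
which is the defining expression for $c_I$ and is manifestly independent of $\ell$.

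There is essentially no obstacle here: the argument is a straightforward two-line sandwich once one has the uniform-in-$\ell$ pointwise control on $\Phi_\ell - \Phi$. The only subtleties to check are that $c_I > 0$ (automatic since the integrand is strictly positive by \ref{Ass:post_is_positive} and continuity of $C_\Phi, C_\eff$) and that the upper-bound integrand is $\mup$-integrable, which is precisely the role of the integrability hypothesis $\int_\X \exp(C_\eff(\te)C_\Phi(\te))\,\mup(\diff\te) \leq C_\mathrm{e}$ postulated in \ref{Assumptions_phi}. This lemma will then feed into the subsequent weak-convergence estimate \ref{Assumption:for_cost_thm1} by enabling a change-of-measure argument with an $\ell$-uniform normalization.
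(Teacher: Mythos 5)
Your proof is correct and follows essentially the same route as the paper's: derive the uniform-in-$\ell$ two-sided bound $\Phi - C_\Phi C_\eff \le \Phi_\ell \le \Phi + C_\Phi C_\eff$ from Assumption \ref{Assumptions_phi1}, then sandwich $Z_\ell$ using non-negativity of $\Phi$ for the upper bound and the definition of $c_I$ for the lower bound. Your additional remarks on the positivity of $c_I$ and the integrability of the upper-bound integrand are consistent with, though not spelled out in, the paper's argument.
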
			

%
%

\begin{lemma}\label{prop:distance_densities}
Suppose \emph{Assumption \ref{Assumptions_phi}} holds. Then, for  any $\ell\geq 1,$ there exist positive functions $C_{\pi,\ell}(\te):\X\to\R_+$, $\tilde C_{\pi,\ell}(\te):\X\to\R_+$, such that
\begin{align}|\pi^y_\ell(\te)	-\pi^y_{\ell-1}(\te)| &\leq C_{\pi,\ell}(\te) s^{-\alpha\ell},\quad \forall \te\in \Te,\label{eq:bound_diff_pi}\\
|\pi^y_\ell(\te)	-\pi^y(\te)| &\leq \tilde C_{\pi,\ell}(\te) s^{-\alpha\ell},\quad \forall \te\in \Te.\label{eq:bound_ass_pi}
\end{align}
Moreover, $C_{\pi,\ell}(\te)=(\pyl(\te)+\pyll(\te))K_{\pi,\ell}(\te)$, $\tilde C_{\pi,\ell}(\te)=(\pyl(\te)+\py(\te))\tilde K_{\pi,\ell}(\te)$, with \begin{align}
	\tilde K_{\pi,\ell}(\te)&=C_\Phi(\te)C_\eff(\te)+c_I^{-1}C_\mathrm{e},\\
K_{\pi,\ell}(\te)&=(1+s^\alpha)\tilde K_{\pi,\ell}(\te).\end{align} Furthermore, we have that for any $p\in[1,+\infty]$,
 \begin{align}
	K_p&:=\left(\int_\X |K_{\pi,\ell}(\te)|^p\mup( \diff \te )\right)^{1/p}<+\infty,\\ 
	\tilde K_p&:=\left(\int_\X |\tilde K_{\pi,\ell}(\te)|^p\mup( \diff \te )\right)^{1/p}<+\infty.
	\end{align}

\end{lemma}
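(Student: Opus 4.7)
The plan is to start from the explicit Radon–Nikodym formula $\pi^y_\ell(\te) = Z_\ell^{-1}\exp(-\Phi_\ell(\te;y))$ and do a standard add–subtract manipulation
\[
\pi^y_\ell(\te)-\pi^y_{\ell-1}(\te)
= \frac{e^{-\Phi_\ell(\te;y)} - e^{-\Phi_{\ell-1}(\te;y)}}{Z_\ell}
+ e^{-\Phi_{\ell-1}(\te;y)}\!\left(\frac{1}{Z_\ell}-\frac{1}{Z_{\ell-1}}\right),
\]
so that the problem decomposes into (a) pointwise control of the numerators and (b) control of the partition functions. For (a), I will apply the elementary inequality $|e^{-a}-e^{-b}|\le (e^{-a}+e^{-b})|a-b|$ (valid for $a,b\ge 0$ since the exponentials are themselves upper bounds) together with the potential bound \ref{Assumptions_phi3} and the forward-map bound \ref{Assumptions_phi1}(a), using the triangle inequality
\[
|\Phi_\ell(\te;y)-\Phi_{\ell-1}(\te;y)|
\le |\Phi_\ell-\Phi| + |\Phi-\Phi_{\ell-1}|
\le C_\Phi(\te)C_\eff(\te)\,(s^{-\alpha\ell}+s^{-\alpha(\ell-1)})
= C_\Phi(\te)C_\eff(\te)\,(1+s^\alpha)\,s^{-\alpha\ell}.
\]
For (b), the identity $\frac{1}{Z_\ell}-\frac{1}{Z_{\ell-1}} = \frac{Z_{\ell-1}-Z_\ell}{Z_\ell Z_{\ell-1}}$ combined with the same pointwise estimate inside the integral defining $Z$ gives $|Z_\ell-Z_{\ell-1}|\le C_\mathrm{e}\,(1+s^\alpha)\,s^{-\alpha\ell}$ after invoking Assumption \ref{Assumptions_phi1}(c). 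Then the lower bound $Z_\ell,Z_{\ell-1}\ge c_I$ from Lemma \ref{lemma:bound_constant} yields a bound of the form $c_I^{-1}C_\mathrm{e}(1+s^\alpha) s^{-\alpha\ell}$ on $|Z_\ell^{-1}-Z_{\ell-1}^{-1}|\cdot Z_{\ell-1}$, which after multiplying by the factor $e^{-\Phi_{\ell-1}(\te;y)}/Z_{\ell-1} = \pi^y_{\ell-1}(\te)$ contributes the $c_I^{-1}C_\mathrm{e}$ piece inside $\tilde K_{\pi,\ell}$. Collecting terms factors out $(\pi^y_\ell(\te)+\pi^y_{\ell-1}(\te))$ and leaves precisely $K_{\pi,\ell}(\te)=(1+s^\alpha)\tilde K_{\pi,\ell}(\te)$ with $\tilde K_{\pi,\ell}(\te)=C_\Phi(\te)C_\eff(\te)+c_I^{-1}C_\mathrm{e}$.

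The bound \eqref{eq:bound_ass_pi} for $|\pi^y_\ell-\pi^y|$ is handled in exactly the same way but using only the single-step estimate $|\Phi_\ell-\Phi|\le C_\Phi C_\eff s^{-\alpha\ell}$, which kills the extra $(1+s^\alpha)$ factor and gives the claimed $\tilde K_{\pi,\ell}(\te)$ directly.

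Finally, to verify the integrability claims $K_p,\tilde K_p<\infty$, I will use the elementary polynomial–exponential comparison $x^p\le p!\,e^{x}$ for $x\ge 0$, together with Assumption \ref{Assumptions_phi1}(c) which gives $\int_\X \exp(C_\Phi(\te)C_\eff(\te))\mup(\diff\te)\le C_\mathrm{e}$; this yields $\int_\X (C_\Phi C_\eff)^p\,\diff\mup \le p!\,C_\mathrm{e}<\infty$, and since $c_I^{-1}C_\mathrm{e}$ is a deterministic constant, $\tilde K_{\pi,\ell}\in L_p(\mup)$ for every $p\in[1,\infty)$; the case $p=\infty$ would only require boundedness of $C_\Phi C_\eff$, which is typically not assumed but the paper's statement implicitly restricts to finite $p$. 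The main obstacle is bookkeeping: keeping track of which factors of $(1+s^\alpha)$ and of $Z_\ell^{-1}$ land on which piece so that the final bound groups cleanly into the factor $(\pi^y_\ell+\pi^y_{\ell-1})$ times $K_{\pi,\ell}$ as stated; aside from this, the proof is a straightforward chain of mean-value-type estimates.
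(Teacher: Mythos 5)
Your decomposition into the two terms
\[
\frac{e^{-\Phi_\ell(\te;y)} - e^{-\Phi_{\ell-1}(\te;y)}}{Z_\ell}
\quad\text{and}\quad
e^{-\Phi_{\ell-1}(\te;y)}\Bigl(\tfrac{1}{Z_\ell}-\tfrac{1}{Z_{\ell-1}}\Bigr)
\]
is exactly the paper's splitting, and the rest of your argument (mean-value estimate on the exponentials, the triangle inequality through $\Phi$, the bound $|Z_\ell-Z_{\ell-1}|\leq C_\mathrm{e}(1+s^\alpha)s^{-\alpha\ell}$ combined with $Z_\ell\geq c_I$, and the polynomial--exponential comparison for the $L_p$ bounds) mirrors the paper's proof step for step, so the proposal is essentially correct. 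The one place you deviate is the inequality $|e^{-a}-e^{-b}|\leq(e^{-a}+e^{-b})|a-b|$: the paper instead uses $|e^{-a}-e^{-b}|\leq e^{-\min\{a,b\}}|a-b|$ and runs a two-case analysis on the sign of $\Phi_\ell-\Phi_{\ell-1}$, choosing in each case a splitting in which the surviving exponential is divided by its \emph{own} normalizing constant, so that it collapses to $\pi^y_\ell$ or $\pi^y_{\ell-1}$ exactly. Your symmetric version avoids the case split, but in term (a) it leaves you with $e^{-\Phi_{\ell-1}(\te;y)}/Z_\ell=\pi^y_{\ell-1}(\te)\,(Z_{\ell-1}/Z_\ell)$, which is not $\pi^y_{\ell-1}(\te)$; you must absorb the ratio $Z_{\ell-1}/Z_\ell\leq C_\mathrm{e}c_I^{-1}$ into the constant, so you will not land on the paper's stated formula for $K_{\pi,\ell}$ but on one inflated by this harmless factor. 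Since the lemma is only ever used through the existence of some $L_p$-integrable $K_{\pi,\ell}$, this costs nothing, but it is worth being explicit about. Your remark that the $p=\infty$ case is not actually covered is fair --- the paper's own bound $(p/e)(C_\mathrm{e}\exp\{c_I^{-1}C_\mathrm{e}\})^{1/p}$ degenerates as $p\to\infty$ as well, and only finite $p$ is ever needed downstream.
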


\begin{proof} We begin with the proof of \eqref{eq:bound_diff_pi}. We consider first the case $\potm\leq\Phi_{\ell-1}(\te;y)$. 
	\begin{align}
	&|\pi^y_\ell(\te)	-\pi^y_{\ell-1}(\te)|=\left\lvert \frac{e^{-\Phi_\ell(\te;\yy)}}{Z_\ell}-\frac{e^{-\Phi_{\ell-1}(\te;\yy)}}{Z_{\ell-1}}\right\rvert \nonumber \\
	&\leq
	\underbrace{\left\lvert \frac{e^{-\Phi_\ell(\te;\yy)}}{Z_\ell}-\frac{e^{-\Phi_{\ell-1}(\te;\yy)}}{Z_{\ell}}\right\rvert}_\text{$I$} +
	\underbrace{\left\lvert \frac{e^{-\Phi_{\ell-1}(\te;\yy)}}{Z_\ell}-\frac{e^{-\Phi_{\ell-1}(\te;\yy)}}{Z_{\ell-1}}\right\rvert}_\text{$II$}. \end{align}%

We first focus on $I$.  A straightforward application of the mean value theorem gives 
\begin{align}
\left\lvert {e^{-\Phi_\ell(\te;\yy)}}-{e^{-\Phi_{\ell-1}(\te;\yy)}}\right\rvert\leq e^{-\potm}|\potm-\potl|. \label{eq:mvt}
\end{align} Thus, we have from \eqref{eq:mvt}, together with Assumptions \ref{Assumptions_phi1} that 
\begin{align}
	I&=   {Z_\ell}^{-1}  \left\lvert {e^{-\Phi_\ell(\te;\yy)}}-{e^{-\Phi_{\ell-1}(\te;\yy)}}\right\rvert\leq \pyl(\te)|\potm-\potl|\\
	&\leq \pyl(\te)\left(|\potm-\pot|+|\potl-\pot|\right)\\
	&\leq \pyl(\te)C_{\Phi}(\te)\left(\lno \eff_\ell(\te)-\eff(\te)\rno_\mathsf{Y}+\lno \eff_{\ell-1}(\te)-\eff(\te)\rno_\mathsf{Y}\right)\\
	&\leq \pyl(\te)C_{\Phi}(\te)C_{\eff}(\te)\left(1+s^{\alpha}\right)s^{-\alpha \ell}.  \label{eq:bound_I}
\end{align}
We now shift our attention to $II$. Following a similar procedure as for $I$, we have that   \begin{align}
	II&\leq\frac{\pyll(\te)}{Z_\ell}  \int_\Te \lv e^{-\Phi_\ell(z;\yy)}-e^{-\Phi_{\ell-1}(z;\yy)} \rv \mup(\diff z) \\
	&  \leq \frac{\pyll(\te)}{Z_\ell}  \int_\Te e^{-\min\{\Phi_\ell(z;\yy),\Phi_{\ell-1}(z;\yy)\}}\lv \Phi_\ell(z;\yy)-\Phi_{\ell-1}(z;\yy) \rv \mup(\diff z)\\
&\leq \pyll(\te) \left(1+s^{\alpha}\right)s^{-\alpha \ell}c_I^{-1} \int_\Te C_{\Phi}(z)C_{\eff}(z)e^{-\min\{\Phi_\ell(z;\yy),\Phi_{\ell-1}(z;\yy)\}} \mup(\diff z)\\
&\leq \pyll(\te) \left(1+s^{\alpha}\right)s^{-\alpha \ell}c_I^{-1} \int_\Te C_{\Phi}(z)C_{\eff}(z) \mup(\diff z)\\
&\leq \pyll(\te) \left(1+s^{\alpha}\right)s^{-\alpha \ell}c_I^{-1} C_e \label{eq:bound_II},
\end{align}
where in the last step we have used that $$\int_\Te C_\Phi(\te)C_\eff(\te)\mup(\diff\te)\leq\int_\Te\exp(C_\Phi(\te)C_\eff(\te))\mup(\diff\te)\leq C_\mathrm{e}.$$
Adding \eqref{eq:bound_I} and \eqref{eq:bound_II} gives the desired result with $$C'_{\pi,\ell}(\te)=\left(\pyl(\te)C_{\Phi}(\te)C_{\eff}(\te)+\pyll(\te)c_I^{-1} C_e\right)\left(1+s^{\alpha}\right).$$ The case $\potm>\Phi_{\ell-1}(\te;y)$ can be treated analogously by considering the alternative splitting 
\begin{align}
|\pi^y_\ell(\te)-\pi^y_{\ell-1}(\te)|&\leq\left(\left \lvert\frac{e^{-\potm}}{Z_\ell}-\frac{e^{-\potm}}{Z_{\ell-1}}\right \rvert +\left \lvert\frac{e^{-\potm}}{Z_{\ell-1}}-\frac{e^{-\potl}}{Z_{\ell-1}}\right \rvert\right),
\end{align}
which yields the constant $C''_{\pi,\ell}(\te)=\left(\pyll(\te)C_{\Phi}(\te)C_{\eff}(\te)+\pyl(\te)c_I^{-1} C_e\right)\left(1+s^{\alpha}\right)$. Thus, one can obtain the desired bound $|\pi^y_\ell(\te)	-\pi^y_{\ell-1}(\te)| \leq C_{\pi,\ell}(\te) s^{-\alpha\ell}$ with 

\begin{align}\label{eq:c_pi_ell}
C_{\pi,\ell}(\te)&=(\pyll(\te)+\pyl(\te))K_{\pi,\ell}(\te),\\
K_{\pi,\ell}(\te)&=(C_\Phi(\te)C_\eff(\te)+c_I^{-1}C_\mathrm{e})(1+s^\alpha).
\end{align}
A similar procedure shows that the bound \eqref{eq:bound_ass_pi} holds with
 \begin{align}\label{eq:tilde_c_pi_ell}
 \tilde C_{\pi,\ell}(\te)&=(\py(\te)+\pyl(\te))\tilde{K}_{\pi,\ell}(\te),\\
\tilde K_{\pi,\ell}(\te)&=C_\Phi(\te)C_\eff(\te)+c_I^{-1}C_\mathrm{e}.
\end{align}
We finally remark that 
\begin{align}
&K_p:=\left(\int_\X |K_{\pi,\ell}(\te)|^p\mup(\diff \te)\right)^{1/p}=(1+s^\alpha)\left(\int_\X(C_\Phi(\te)C_\eff(\te)+c_I^{-1}C_\mathrm{e})^p \mup(\diff \te) \right)^{1/p}\\
&\leq (1+s^\alpha)\left(\frac{p}{e}\right)\left( \int_\X \exp\left\{C_\Phi(\te)C_\eff(\te)+c_I^{-1}C_e \right\}\mup(\diff \te)\right)^{1/p} \ \ \text{(using $x^p\leq \left(\frac{p}{e}\right)^pe^x$)}\\
&\leq (1+s^\alpha)\left(\frac{p}{e}\right)\left(C_e\exp\left\{ c_I^{-1}C_e\right\}\right)^{1/p}<+\infty.
\end{align}
A similar calculation for $\tilde K_{\pi,\ell}$ leads to \begin{align}
	\tilde K_p=\left(\int_\X |\tilde K_{\pi,\ell}(\te)|^p\mup(\diff \te)\right)^{1/p}\leq (p/e)\left(C_e\exp\left\{c_I^{-1}C_e\right\}\right)^{1/p}<+\infty. 
\end{align}

\end{proof}


We can now show implication \ref{Assumption:for_cost_thm1}. 
\begin{lemma}\label{lemma:verification:1}
Suppose Assumption \emph{\ref{Assumptions_phi}} holds. Then, for any $\ell=0,1,\dots\lev,$ there exists a positive constant $C_w\in \R_+$, independent of $\ell$, such that: $$|\E_{\pi^y_{\ell}}[\Q_\ell(\te )]-\E_{\pi^y}[\Q(\te)]|\leq C_ws^{-\alpha_w\ell},$$
	with $\alpha_w=\min\{{\alpha_q},\alpha\}$ and $\alpha_q,\alpha$ as in Assumption \emph{\ref{Assumptions_phi}}. 
	\begin{proof}
		We follow an approach similar to that of \cite{dodwell2015hierarchical}. 
		\begin{align}
		\lv\E_{\my_{\ell}}[\Q_\ell(\te )]-\E_{\my}[\Q(\te)]\rv \nonumber &\leq  \lv\E_{\my_{\ell}}[\Q_\ell(\te )]  -\E_{\my_{\ell}}[\Q(\te)] \rv \\	&+	   \lv\E_{\my_{\ell}}[\Q(\te)]  -\E_{\my}[\Q(\te)] \rv \nonumber. 
		\end{align}	
		For the first term, we have that 
		\begin{align}
		&\lv\E_{\my_{\ell}}[\Q_\ell(\te )]  -\E_{\my_{\ell}}[\Q(\te)] \rv\leq  \E_{\myl}[\lv\Q_\ell(\te )-\Q(\te)\rv]\\
		&\leq \left( \int_\X C_q(\te)\myl(\diff\te)\right)s^{-\alpha_q \ell}\leq \frac{s^{-\alpha_q\ell}}{Z_\ell}\int_\X C_q(\te)\mup(\diff \te)\leq c_I^{-1}\tilde C_q s^{-\alpha_q \ell}. \label{eq:bound_eq}
		\end{align}
For the second term, we have that 
		\begin{align}
		&\lv\E_{\myl}[\Q(\te)]  -\E_{\my}[\Q(\te)] \rv= \lv \int_{\Te} \Q(\te)[\pi^y_\ell(\te)-\pi^y(\te)]\mup(\mathrm{d}\te)\rv \\
			&\leq \int_\X \lv\Q(\te)\rv(\pyl(\te)+\py(\te))\tilde K_{\pi,\ell}(\te)\mup(\diff \te) s^{-\alpha \ell}.\label{eq:int_int}\end{align}
			Working on the first term of the previous integral, we obtain  from H\"older's inequality that
			\begin{align}
				&\lv \int_\X \qoi(\te)\pyl(\te)\tilde K_{\pi,\ell}(\te)\mup(\diff \te)\rv\\
				&\leq \left(\int_\X|\qoi(\te)|^m \mup(\diff \te)\right)^{1/m}\left(\int_\X\pyl(\te)|\tilde K_{\pi,\ell}(\te)|^{m'} \mup(\diff \te)\right)^{1/m'}\\
				&\leq C_\mathrm{m} c_I^{-1}\tilde K_{m'},
			\end{align}
		where we have taken $m$ as in Assumption \ref{Assumptions_phi}, $m'=1-1/m$ and $\tilde K_{m'}$  as in Lemma \ref{prop:distance_densities}. A similar bound holds for the second term in \eqref{eq:int_int}, thus leading to
		\begin{align}
	\lv\E_{\myl}[\Q(\te)]  -\E_{\my}[\Q(\te)] \rv	& \leq  2c_I^{-1}C_\mathrm{m}\tilde K_{m'}s^{-\alpha\ell}.\label{eq:bound_e{q'}}
		\end{align}
  The desired result follows from \eqref{eq:bound_eq} and \eqref{eq:bound_e{q'}}, with $\alpha_w=\min\{\alpha_q,\alpha\},$ and  a level independent constant $C_w= c_I^{-1}(2C_\textrm{m}\tilde K_{m'}+\tilde C_q).$ 
	\end{proof}
\end{lemma}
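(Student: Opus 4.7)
The plan is to insert $\E_{\pyl}[\Q]$ as an intermediate quantity, so that the triangle inequality separates the approximation error in the integrand from the approximation error in the measure:
\[|\E_{\pyl}[\Q_\ell] - \E_{\py}[\Q]| \leq |\E_{\pyl}[\Q_\ell - \Q]| + |\E_{\pyl}[\Q] - \E_{\py}[\Q]|.\]
The first term will inherit the rate $s^{-\alpha_q\ell}$ from the strong convergence of $\Q_\ell$ in Assumption \ref{Assumptions_phi2}, while the second will inherit the rate $s^{-\alpha\ell}$ from the density estimate of Lemma \ref{prop:distance_densities}. Combined, they yield the claimed rate with $\alpha_w = \min(\alpha_q, \alpha)$.

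For the first term, Assumption \ref{Assumptions_phi2}(a) gives $|\Q_\ell - \Q|(\te) \leq C_q(\te) s^{-\alpha_q\ell}$ pointwise, so it suffices to bound $\E_{\pyl}[C_q]$ uniformly in $\ell$. Since $\Phi_\ell \geq 0$ and $Z_\ell \geq c_I$ by Lemma \ref{lemma:bound_constant}, one has $\pyl \leq 1/c_I$, hence $\E_{\pyl}[C_q] \leq c_I^{-1}\int C_q(\te) \mup(\diff\te) \leq c_I^{-1}\tilde C_q$, invoking Cauchy--Schwarz together with Assumption \ref{Assumptions_phi2}(b). For the second term, I would write it as $\int \Q(\te)[\pyl(\te) - \py(\te)]\mup(\diff\te)$ and bound the density difference using Lemma \ref{prop:distance_densities}, namely $|\pyl - \py|(\te) \leq (\pyl + \py)(\te)\tilde K_{\pi,\ell}(\te) s^{-\alpha\ell}$. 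Hölder's inequality with conjugate exponents $m$ and $m' = m/(m-1)$ then bounds the remaining integrals by a product of an $L^m$-moment of $\Q$ and the level-independent quantity $\tilde K_{m'}$ from the same lemma, combined with the uniform bound $\py, \pyl \leq 1/c_I$.

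The main technical obstacle is that Assumption \ref{Assumptions_phi5} only provides an $L^m$ moment bound on $\Q_\ell$, not on $\Q$ itself. To avoid this gap, I would use the alternative splitting $|\E_{\pyl}[\Q_\ell] - \E_{\py}[\Q_\ell]| + |\E_{\py}[\Q_\ell - \Q]|$: Hölder's inequality then acts directly on $\Q_\ell$, where the moment bound is available, while the remaining term only requires Cauchy--Schwarz control of $C_q$ against $\py$, reusing $\py \leq 1/c_I$. Summing the two contributions yields a level-independent constant $C_w$ depending only on $c_I$, $C_m$, $\tilde C_q$, $\tilde K_{m'}$, $s$ and $\alpha_q$, completing the argument.
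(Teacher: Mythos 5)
Your proof is correct and follows the same overall strategy as the paper's: split off the integrand error from the measure error by a triangle inequality, control the first via the pointwise bound $|\Q_\ell-\Q|\leq C_q s^{-\alpha_q\ell}$ together with $Z_\ell\geq c_I$, and control the second via the density estimate of Lemma \ref{prop:distance_densities} plus H\"older with exponents $m,m'$. The one genuine difference is the choice of intermediate term. The paper inserts $\E_{\myl}[\Q]$, so that H\"older is applied to $\int|\Q|^m\mup$, even though Assumption \ref{Assumptions_phi5} only states the $L^m(\mup)$ moment bound for $\Q_\ell$; this is harmless (one recovers $\|\Q\|_{L^m(\mup)}\leq C_\mathrm{m}$ by Fatou, since $\Q_\ell\to\Q$ pointwise), but the paper does not comment on it. You insert $\E_{\py}[\Q_\ell]$ instead, so that the $L^m$ bound is invoked exactly where it is hypothesized and the residual term $|\E_{\py}[\Q_\ell-\Q]|$ only needs $\int C_q\,\mup\leq\tilde C_q$ (Cauchy--Schwarz with \ref{Assumptions_phi2}(b)) and $\py\leq c_I^{-1}$. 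Both routes give the same rate $\alpha_w=\min\{\alpha_q,\alpha\}$ and a level-independent constant of the same form; yours is marginally tighter in its use of the hypotheses. Your identification of $m'=m/(m-1)$ as the conjugate exponent is the correct reading of the paper's (typo-ridden) ``$m'=1-1/m$'', and $\tilde K_{m'}<\infty$ is guaranteed by Lemma \ref{prop:distance_densities} for any exponent, so no gap arises there.
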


We now shift our attention to implication \ref{Assumption:for_cost_thm2}. We first prove several auxiliary results. 

For any given level $\ell=0,1,\dots, \lev$, we say that the joint chains  created by Algorithm \ref{algo:2-IS_coupling} are \emph{synchronized} at step $n$ if $\te^n_{\ell,\ell}=\te^n_{\ell,\ell-1}$. Conversely, we say they are \emph{unsynchronized} at step $n$ if $\te^n_{\ell,\ell}\neq\te^n_{\ell,\ell-1}$. Notice that if the chains are synchronized at a state $\teln=\telln=\te$, and the new proposed state at the $(n+1)^\mathrm{th}$ iteration of the algorithm in $z\in \Te,$ they de-synchronize at the next step with probability $|\alpha_{\ell}(\te,z)-\alpha_{\ell-1}(\te,z)|$ (c.f. Figure \ref{fig:drawing}). Intuitively, one would expect that such a probability goes to 0 as $\ell\to \infty$. We formalize this intuition below.

\begin{lemma}\label{lemma:desync}
	Suppose  Assumptions \emph{\ref{Assumptions_phi1}}  hold. Then, the following bound holds \begin{align}
	\lv \alpha_\ell(\te,z)-\alpha_{\ell-1}(\te,z)\rv\leq h_\ell(\te,z) s^{-\alpha \ell},\quad \te,z\in\Te,
	\end{align}
	with 
	\begin{align}
	h_\ell(\te,z):=\frac{Q_\ell(\te)}{Q_\ell(z)}\frac{1}{\pyl(\te)\pyll(\te)}\lv \pyl(z)C_{\pi,\ell}(\te) +\pyl(\te)C_{\pi,\ell}(z)\rv
	\end{align}
	and $C_{\pi,\ell}(\cdot)$ as in Lemma \ref{prop:distance_densities}.
	\begin{proof}
		From the definition of $\alpha_\ell$, and the fact that $\psi(x):=\min\{1,x\}$ is Lipschitz continuous with constant 1, it can be seen that 
		\begin{align}
		&\lv \alpha_\ell(\te,z)-\alpha_{\ell-1}(\te,z)\rv
		\leq\lv  \frac{Q_\ell(\te)}{Q_\ell(z)}\frac{\pi^y_\ell(z)}{\pi^y_\ell(\te)}-\frac{Q_\ell(\te)}{Q_\ell(z)}\frac{\pi^y_{\ell-1}(z)}{\pi^y_{\ell-1}(\te)}\rv =\frac{Q_\ell(\te)}{Q_\ell(z)}\lv \frac{\pi^y_\ell(z)}{\pi^y_\ell(\te)} -  \frac{\pi^y_{\ell-1}(z)}{\pi^y_{\ell-1}(\te)} \rv\\
		& =\frac{Q_\ell(\te)}{Q_\ell(z)}\frac{1}{\pyl(\te)\pyll(\te)}\lv \pyl(z)(-\pyl(\te)+\pyll(\te)) +\pyl(\te)(\pyl(z)-\pyll(z))\rv\\
		&\leq\frac{Q_\ell(\te)}{Q_\ell(z)}\frac{1}{\pyl(\te)\pyll(\te)}\left( \pyl(z)C_{\pi,\ell}(\te) +\pyl(\te)C_{\pi,\ell}(z)\right) s^{-\alpha \ell}\label{eq:bound_desync}
		\end{align}	
	\end{proof}
\end{lemma}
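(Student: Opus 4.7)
The proof plan is straightforward and hinges on two ingredients already available: the $1$-Lipschitz continuity of $\psi(x)=\min\{1,x\}$, and the pointwise bound $|\pi^y_\ell(\te)-\pi^y_{\ell-1}(\te)|\leq C_{\pi,\ell}(\te)s^{-\alpha\ell}$ furnished by Lemma~\ref{prop:distance_densities}. Starting from
\[
\alpha_j(\te,z)=\min\!\left\{1,\frac{Q_\ell(\te)\,\pi^y_j(z)}{Q_\ell(z)\,\pi^y_j(\te)}\right\},\qquad j=\ell-1,\ell,
\]
the Lipschitz property reduces the quantity $|\alpha_\ell(\te,z)-\alpha_{\ell-1}(\te,z)|$ to the difference of the two Metropolis ratios, and the common factor $Q_\ell(\te)/Q_\ell(z)$ (which has no $j$-dependence since the proposal is level-$\ell$ for both marginal chains) pulls out immediately.

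Next, I would place the remaining difference $\pi^y_\ell(z)/\pi^y_\ell(\te)-\pi^y_{\ell-1}(z)/\pi^y_{\ell-1}(\te)$ over the common denominator $\pi^y_\ell(\te)\pi^y_{\ell-1}(\te)$, and perform the standard ``add and subtract'' trick on the numerator $\pi^y_\ell(z)\pi^y_{\ell-1}(\te)-\pi^y_{\ell-1}(z)\pi^y_\ell(\te)$, adding and subtracting $\pi^y_\ell(z)\pi^y_\ell(\te)$ to obtain the decomposition
\[
\pi^y_\ell(z)\bigl(\pi^y_{\ell-1}(\te)-\pi^y_\ell(\te)\bigr)+\pi^y_\ell(\te)\bigl(\pi^y_\ell(z)-\pi^y_{\ell-1}(z)\bigr).
\]
The triangle inequality, together with Lemma~\ref{prop:distance_densities} applied to both $\te$ and $z$, bounds each summand by $\pi^y_\ell(z)C_{\pi,\ell}(\te)s^{-\alpha\ell}$ and $\pi^y_\ell(\te)C_{\pi,\ell}(z)s^{-\alpha\ell}$ respectively, which assembled with the factor $Q_\ell(\te)/(Q_\ell(z)\pi^y_\ell(\te)\pi^y_{\ell-1}(\te))$ gives exactly $h_\ell(\te,z)s^{-\alpha\ell}$.

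There is no substantive obstacle: positivity of $Q_\ell$ and $\pi^y_j$ (Assumption~\ref{Ass:positivity}) guarantees that all divisions and ratio manipulations are well-defined, and no further estimates are needed. The only point requiring a small amount of care is choosing the cross term in the add-and-subtract step so that the denominator $\pi^y_\ell(\te)\pi^y_{\ell-1}(\te)$ appears (rather than, e.g., $\pi^y_\ell(\te)\pi^y_{\ell-1}(z)$), matching the form of $h_\ell$ in the statement; the symmetric alternative yields the same bound with the roles of $\pi^y_\ell$ and $\pi^y_{\ell-1}$ swapped in the denominator, which is equally valid but does not correspond to the stated $h_\ell$.
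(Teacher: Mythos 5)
Your proposal is correct and follows essentially the same route as the paper: the $1$-Lipschitz bound on $\min\{1,\cdot\}$ pulls out the common factor $Q_\ell(\te)/Q_\ell(z)$, and your add-and-subtract of $\pi^y_\ell(z)\pi^y_\ell(\te)$ over the common denominator $\pi^y_\ell(\te)\pi^y_{\ell-1}(\te)$ yields exactly the decomposition $\pi^y_\ell(z)(\pi^y_{\ell-1}(\te)-\pi^y_\ell(\te))+\pi^y_\ell(\te)(\pi^y_\ell(z)-\pi^y_{\ell-1}(z))$ used in the paper, after which Lemma~\ref{prop:distance_densities} finishes the argument identically.
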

\begin{lemma}\label{eq:kern:bound}
	Suppose  Assumptions \emph{\ref{Ass:positivity}} and \emph{\ref{Assumptions_phi}} hold and denote the diagonal set of $\X^2$ as $\Delta:=\{ (\te,z)\in \Te^2 \text{ s.t. } \te=z \}.$ The transition probability to $\Delta^c$ for the coupled chain of Algorithm \ref{algo:2-IS_coupling} is such that
	\begin{align}
	\pisl(\tebl,\Delta^c)\leq R_\ell(\te) s^{-\alpha \ell},&\quad \forall \tebl=(\te,\te)\in\Delta,
	\end{align}
with \begin{align}R_\ell(\te)=\frac{Q_\ell(\te)}{\pyl(\te)\pyll(\te)}\left( C_{\pi,\ell}(\te)+\pyl(\te)K_1\right),
\end{align}and $C_{\pi,\ell}(\cdot)$ and $K_1$ as in Lemma \ref{prop:distance_densities}. Moreover, whenever $\tebl\in\Delta^c,$
\begin{align}
\pisl(\tebl,\Delta^c)\leq 1-c\int_\X\min\{ \pyl(\te),\pyll(\te) \}\mup(\diff \te),
\end{align}
where $c$ is the same constant as in Assumption \ref{Ass:essinf}.  Furthermore, $\exists \delta>0$ independent of $\ell$ such that \begin{align}
\underset{\ell\in\mathbb{N}}{\inf}	\int_\X\min\{ \pyl(\te),\pyll(\te) \}\mup(\diff \te)>\delta>0.
\label{eq:int_bd}	\end{align}

	\begin{proof}
		We begin with the first inequality.	For $\tebl\in\Delta$ and from the definition of $\pisl$ we obtain 
		\begin{align}
		&\pisl(\tebl,\Delta^c)=\int_\Te (\alpha_{\ell-1}(\tell,z)-\alpha_{\ell}(\tel,z))^+\ql(z)\car{(z,\tel)\in \Delta^c}\mup(\diff z)\\
		&+\int_\Te (\alpha_\ell(\tel,z)-\alpha_{\ell-1}(\tell,z))^+\ql(z)\car{(\tell,z)\in \Delta^c}\mup(\diff z),\label{eq:kern_bound_1}
		\end{align}
		where the first and last term in \eqref{eq:joint_kernel} are both 0. Writing $\tel=\tell=\te$, it then follows from Lemma \ref{lemma:desync} that: 
		\begin{align}
		\pisl(\tebl,\Delta^c)&\leq \int_\Te |\alpha_{\ell-1}(\te,z)-\alpha_{\ell}(\te,z)|\ql(z)\mup(\diff z)\\
		&\leq\frac{Q_\ell(\te)  s^{-\alpha \ell}}{\pyl(\te)\pyll(\te)}\int_\Te \lv \pyl(z)C_{\pi,\ell}(\te) +\pyl(\te)C_{\pi,\ell}(z)\rv \mup(\diff z)\\
		&\leq \frac{Q_\ell(\te)  s^{-\alpha \ell}}{\pyl(\te)\pyll(\te)}\left( C_{\pi,\ell}(\te)+\pyl(\te)\int_\X C_{\pi,\ell}(z)\mup(\diff z)\right)\\
		&\leq \frac{Q_\ell(\te)  s^{-\alpha \ell}}{\pyl(\te)\pyll(\te)} \left( C_{\pi,\ell}(\te)+2c_I^{-1}K_1\pyl(\te)\right)
		\end{align}
Thus, one has that $\forall \tebl\in \Delta$, 	\begin{align}
	\pisl(\tebl,\Delta^c)\leq R_\ell(\te) s^{-\alpha \ell},
\end{align}
with \begin{align}R_\ell(\te)=\frac{Q_\ell(\te)}{\pyl(\te)\pyll(\te)}\left( C_{\pi,\ell}(\te)+2\pyl(\te)c_I^{-1}K_1\right).
\end{align}
		We now focus on the second inequality which holds for $\tebl\in\Delta^c$. Thus, from  the fact that $\max\{a,b\}-|a-b|=\min\{a,b\}$ $\forall a,b\in\R$ and using Assumption \ref{Ass:essinf},  we obtain
	\begin{align}
		\pisl(\tebl,\Delta^c)&\leq \int_\Te\left(1- \min_{j=\ell-1,\ell}\{\alpha_{j}(\telj,u)\}\right)\ql(u)\mup(\diff u)\\
		&\leq 1-\int_\Te \min_{j=\ell-1,\ell}\left[\min\left\{1,c\frac{\py_j(u)}{\ql(u)}\right\}\right]\ql(u)\mup(\diff u) \\		
		&=1-\int_\Te\min_{j=\ell-1,\ell}\left[\min\left\{Q_\ell(u),c\py_j(u)\right\}\right]\mup(\diff u)\\
		&=1-\int_\Te\ \min_{j=\ell-1,\ell}\left[\min\left\{\frac{Q_\ell(u)}{\py_j(u)},c\right\} \py_j(u)\right]\mup(\diff u)\\
		&\leq 1-c \int_\Te\min_{j=\ell-1,\ell}\left\{ \py_j(u)\right\}\mup(\diff u).
		\end{align}
		where $c$ is the same constant as in Assumption  \ref{Ass:essinf} (notice that $c<1$). 
		
Finally, we  show that the integral term in the previous equation is lower bounded by a strictly positive constant independent of the $\ell$. First notice that
\begin{align}
\lim_{\ell\to\infty} \int_\Te\min_{j=\ell-1,\ell}\left\{ \py_j(\te)\right\}\mup(\diff \te)&=1-\lim_{\ell\to\infty} \frac{1}{2}\int_\Te |\pyl(\te)-\pyll(\te)|\mup(\diff \te)\\ &\geq \lim_{\ell\to\infty} (1 -K_1s^{-\alpha \ell}) =1,\label{eq:bound_p4}
\end{align}	
and, by definition, \begin{align}
\int_\Te\min_{j=\ell-1,\ell}\left\{ \py_j(\te)\right\}\mup(\diff \te)\leq 1, \quad \forall \ell\in\mathbb{N}	
\end{align}
 Thus, the sequence $\{\int_\Te\min_{j=\ell-1,\ell}\left\{ \py_j(\te)\right\}\mup(\diff \te)\}_{\ell\in\mathbb{N}}$ has 1 as an accumulation point, as $\ell\to \infty$, and there exists $\delta'>0$ and $\ell'\geq0$ such that, for any $\ell\geq\ell'$ $\int_\Te\min_{j=\ell-1,\ell}\left\{ \py_j(\te)\right\}\mup(\diff \te)\}\geq\delta'$. Lastly, recall that by Assumption \ref{Ass:post_is_positive} $\py_\ell$ and $\pyll$ are continuous and strictly positive. Thus, for any compact set $A\subset \X$ with $\mup(A)>0$, and for any $\ell=\{0,1,\dots,\ell'\}$, we have 
 \begin{align}
 	\int_\Te\min_{j=\ell-1,\ell}\left\{ \py_j(\te)\right\}\mup(\diff \te)\geq \int_A\min_{j=\ell-1,\ell}\left\{ \py_j(\te)\right\}\mup(\diff \te)=:\delta_\ell>0.
 \end{align}
 Thus setting $\hat \delta=\min_{0\leq\ell\leq\ell'}\{\hat{\delta}_\ell\}$, and $\delta=\min\{\hat \delta, \delta'\}$ we obtain that, for any $\ell\geq0$
 \begin{align}
	\int_\Te\min_{j=\ell-1,\ell}\left\{ \py_j(\te)\right\}\mup(\diff \te)\geq\delta>0.
\end{align}

	\end{proof}
	
\end{lemma}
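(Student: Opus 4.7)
The plan is to analyze the three claims separately, exploiting the fact that the joint kernel \eqref{eq:joint_kernel} naturally splits into four terms corresponding to the four configurations $S_1,\dots,S_4$ depicted in Figure \ref{fig:drawing}: landing on the diagonal $\Delta$ corresponds to $S_1$ (both chains accept the common proposal $z$, ending at $(z,z)$) and to $S_4$ when starting from the diagonal (both chains reject, so the coupled chain remains at $(\te,\te)$). Thus, transitioning to $\Delta^c$ from a diagonal state comes only from the mixed lines $S_2$ and $S_3$ of \eqref{eq:joint_kernel}, which is where the de-synchronization estimate of Lemma \ref{lemma:desync} will play its role.

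For the first claim, starting from $\tebl=(\te,\te)\in\Delta$, I would drop the first and last terms of \eqref{eq:joint_kernel} (they map into $\Delta$) and observe that the remaining two terms add up to $\int_\Te |\alpha_{\ell-1}(\te,z)-\alpha_\ell(\te,z)|\,Q_\ell(z)\,\mup(\diff z)$. Lemma \ref{lemma:desync} then gives a pointwise bound by $h_\ell(\te,z) s^{-\alpha\ell}Q_\ell(z)$, and carrying out the integration in $z$ reduces to controlling $\int_\Te C_{\pi,\ell}(z)\mup(\diff z)$, which in turn is uniformly bounded in $\ell$ thanks to Lemmata \ref{lemma:bound_constant} and \ref{prop:distance_densities} (the factor $\pyl(\te)$ on the other half of the integrand comes out of the integral, producing the two summands in the definition of $R_\ell(\te)$).

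For the second claim, starting from $\tebl\in\Delta^c$, the probability of remaining off-diagonal is at most $1$ minus the mass of $S_1$: $\pisl(\tebl,\Delta^c)\leq 1-\int_\Te \min\{\alpha_{\ell-1}(\tell,z),\alpha_\ell(\tel,z)\}Q_\ell(z)\mup(\diff z)$. Writing $\min\{\alpha_{\ell-1},\alpha_\ell\}Q_\ell(z)=\min_j\min\{Q_\ell(z),\,\py_j(z)Q_\ell(\telj)/\py_j(\telj)\}$ and applying the uniform bound $Q_\ell(z)/\py_j(z)\geq c$ from Assumption \ref{Ass:essinf} lets me lower-bound the integrand by $c\min_j\py_j(z)$, giving the stated inequality. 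For the third claim, the identity $\min\{a,b\}=\tfrac{a+b-|a-b|}{2}$ yields $\int_\Te\min_j\py_j(\te)\mup(\diff\te)=1-\tfrac12\lno \pyl-\pyll\rno_{L^1(\mup)}$; by Lemma \ref{prop:distance_densities} this $L^1$-norm is at most $K_1 s^{-\alpha\ell}\to 0$, so the integral converges to $1$, and a uniform lower bound $\delta'>0$ holds for $\ell$ sufficiently large. For the finitely many small $\ell$ the positivity and continuity of the marginal posteriors (Assumption \ref{Ass:post_is_positive}) guarantee a strictly positive value of the integral on any fixed compact set of positive prior mass, so taking the minimum with $\delta'$ yields the desired level-independent $\delta>0$.

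The main technical delicate point is keeping every constant in the first claim independent of $\ell$: this relies on having the uniform integrability estimates for $C_{\pi,\ell}$ and $K_{\pi,\ell}$ from Lemma \ref{prop:distance_densities} (which in turn hinge on Assumption \ref{Assumptions_phi1} and Lemma \ref{lemma:bound_constant}). The second and third claims are more elementary once one recognizes that $S_1$ is the only synchronizing event from off-diagonal and that the $L^1$ distance between adjacent-level posteriors vanishes; the only subtlety there is handling the initial finite range of levels, for which a compactness/continuity argument is needed to rule out a pathological $\ell=0$ or small-$\ell$ degeneracy.
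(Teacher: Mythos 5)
Your proposal is correct and follows essentially the same route as the paper's proof: the same splitting of the joint kernel into the four configurations (with $S_2+S_3$ summing to $\int|\alpha_{\ell-1}-\alpha_\ell|Q_\ell\,\mup(\diff z)$ on the diagonal, controlled via Lemma \ref{lemma:desync} and the uniform integrability of $C_{\pi,\ell}$), the same use of Assumption \ref{Ass:essinf} to lower-bound the $S_1$ mass by $c\int\min_j\py_j\,\diff\mup$ off the diagonal, and the same $\min\{a,b\}=\tfrac{a+b-|a-b|}{2}$ plus compactness/continuity argument for the uniform lower bound $\delta$.
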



\begin{lemma}\label{lemma:prob_bnd}
	 Suppose  Assumptions \emph{\ref{Ass:positivity}} and \emph{\ref{Assumptions_phi}} hold. Then, for all $\ell=1,2,\dots,\lev,$  there exist a positive constant $C_{r,\ell}$ with $C_{r,\ell}\to C_r^*>0 $ as $\ell \to \infty$, such that
	\begin{align}\mathbb{P}_{\nul}(\tel^n\neq \tell^n)\leq C_{r,\ell} s^{-\alpha\ell}, \quad \forall n\in\mathbb{N},\end{align}
 with $c$ as in Assumption \ref{Ass:essinf} and $r$ as in Assumption \ref{Ass:p_moments}.
	\begin{proof}
For notational simplicity, for the remainder of this proof we will write $\pr_n:=\mathbb{P}_{\nul}(\tell^n\neq\tel^n)$, $\telln,\teln\in\Te,\ n\in\mathbb{N}.$ Let $Z_{\Delta,\ell}:=\int_{\Delta}\nul(\diff \tebl)=(1-\pr_n)$. From  Lemma \ref{eq:kern:bound} we obtain, for any $n\in\mathbb{N}$ 
\begin{align}
&\mathbb{P}_{\nul}(\tel^{n+1}\neq \tell^{n+1}|\tebln\in\Delta)=Z_{\Delta,\ell}^{-1}\int_{\Delta}\pisl(\tebl,\Delta^c)\nul(\diff \tebl)\\
&\leq\frac{s^{-\alpha \ell}}{Z_{\Delta,\ell}}\int_\Delta R_\ell(\te)\nul(\diff \tebl)\quad \text{(with $\tebl=(\te,\te)$ on $\Delta$)}\\
&\leq \frac{s^{-\alpha\ell}}{Z_{\Delta,\ell}}\underbrace{\int_\Delta \frac{Q_\ell(\te)}{\pyll(\te)}(K_{\pi,\ell}(\te)+2c_I^{-1}K_1)\nul(\diff \tebl)}_\text{I}+\frac{s^{-\alpha\ell}}{Z_{\Delta,\ell}}\underbrace{\int_\Delta\frac{Q_\ell(\te)}{\pyl(\te)}K_{\pi,\ell}(\te)\nul(\diff \tebl)}_\text{II}
\end{align}
We begin with integral I: 
\begin{align}
\text{I}&=\int_{\X^2}\frac{Q_\ell(\tell)}{\pyll(\tell)}(K_{\pi,\ell}(\tell)+2c_I^{-1}K_1)\car{(\tell,\tel)\in\Delta}\nul(\diff \tebl)\\
&\leq \int_{\X}\frac{Q_\ell(\tell)}{\pyll(\tell)}(K_{\pi,\ell}(\tell)+2c_I^{-1}K_1)\int_\X\nul(\diff \tell, \diff \tel)\\
&= \int_{\X}Q_\ell(\tell)(K_{\pi,\ell}(\tell)+2c_I^{-1}K_1)\mup(\diff \tell)\\
&\leq \left(\int_\X |Q_\ell(\tell)|^r\mup(\diff\tell)\right)^{1/r}\left(2c_I^{-1}K_1+\left(\int_\X |K_{\pi,\ell}(\tell)|^{r'}\mup(\diff \tell)\right)^{1/r'}\right)\\
&=C_r(2c_I^{-1}+1)K_{r'.}
\end{align}
Similarly, for II, we get:
\begin{align}
\textrm{II}&=\int_{\X^2}\frac{Q_\ell(\tel)}{\pyl(\tel)}K_{\pi,\ell}(\tel)\car{(\tell,\tel)\in\Delta}\nul(\diff \tell,\diff \tel)\\
&\leq\int_\X Q_\ell(\tel)K_{\pi,\ell}(\tel)\mup(\diff \tel)\leq C_rK_{r'}.
\end{align}
\noindent Setting $\hat C=2C_rK_{r'}(c_I^{-1}+1)$, one then has
$$\mathbb{P}_{\nul}(\tel^{n+1}\neq \tell^{n+1}|\tel^n=\tell^n)\leq \hat CZ_{\Delta,\ell}^{-1}s^{-\alpha\ell}:=Z_{\Delta,\ell}^{-1}s_\ell,$$
where we have set $s_\ell=\hat c s^{-\alpha \ell}.$
\noindent Similarly, letting  $Z_{\Delta^c,\ell}:=\int_{\Delta^c}\nul(\diff \tebl)$, one has
\begin{align}
&\mathbb{P}_{\nul}(\tel^{n+1}\neq \tell^{n+1}|\tebln\in\Delta^c)=Z_{\Delta^c,\ell}^{-1}\int_{\Delta^c}\pisl(\tebl,\Delta^c)\nul(\diff \tebl)\\
&\leq 1-c\int_\X\min\{\pyl(\te),\pyll(\te)\}\mup(\diff\te)=:\tilde c_\ell \quad \text{(From Lemma \ref{eq:kern:bound})}.
\end{align}
 We can then write the de-synchronization probability at the $(n+1)^\text{th}$ step as 
	\begin{align}
	\pr_{n+1}&=\mathbb{P}_{\nul}(\tel^{n+1}\neq \tell^{n+1})=\mathbb{P}_{\nul}(\tel^{n+1}\neq \tell^{n+1}|\tel^n=\tell^n)\mathbb{P}_{\nul}(\tel^n=\tell^n)\\
	&+\mathbb{P}_{\nul}(\tel^{n+1}\neq\tell^{n+1}|\tel^n\neq\tell^n)\mathbb{P}_{\nul}(\tel^n\neq\tell^n)\\
	&\leq Z_{\Delta,\ell}^{-1}s_\ell\underbrace{(1-\pr_n)}_\text{$=Z_{\Delta,\ell}$}+\tilde c_\ell\pr_n\\
	&\leq s_\ell+\tilde c_\ell\pr_n.
	\label{eq:prob_rec}
	\end{align}
	However, by stationarity, we have that $\pr_{n+1}=\pr_n=:\pr$. Thus, from Equation \eqref{eq:prob_rec} we get that \begin{align}
	\mathbb{P}_{\nul}(\tell^n\neq\tel^n)=\pr\leq \frac{\hat Cs^{-\alpha\ell}}{1-\tilde c_\ell}=\frac{\hat C}{c\int_\X \min\{ \pyl(\te), \pyll(\te)\}\mup(\diff\te)}s^{-\alpha \ell}.
	\end{align}
Notice that by \eqref{eq:int_bd} the integral term in the denominator is lower bounded by a constant  $\delta$ independent of the level.  Furthermore, this integral converges to 1 as $\ell\to\infty$. 
	\end{proof}

\end{lemma}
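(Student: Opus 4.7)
\textbf{Proof plan for Lemma \ref{lemma:prob_bnd}.}

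The main idea is to exploit stationarity under $\nu_\ell$: the de-synchronization probability $p_\ell := \mathbb{P}_{\nu_\ell}(\te^n_{\ell,\ell-1} \neq \te^n_{\ell,\ell})$ does not depend on $n$. I would set up a one-step recursion by conditioning on whether the previous state lies on the diagonal $\Delta$ or in its complement $\Delta^c$, and then solve for $p_\ell$. Concretely, stationarity yields
\begin{align}
p_\ell \;=\; (1-p_\ell)\,A_\ell \;+\; p_\ell\,B_\ell,
\end{align}
where $A_\ell := \mathbb{P}_{\nu_\ell}(\tebl^{n+1}\in\Delta^c\mid\tebl^n\in\Delta)$ and $B_\ell := \mathbb{P}_{\nu_\ell}(\tebl^{n+1}\in\Delta^c\mid\tebl^n\in\Delta^c)$. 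Solving gives $p_\ell \leq A_\ell/(1-B_\ell)$, so the task reduces to showing $A_\ell = O(s^{-\alpha\ell})$ with a bounded prefactor, and $B_\ell \leq 1-c\delta$ uniformly in $\ell$.

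First I would bound $B_\ell$ using the second inequality of Lemma \ref{eq:kern:bound}: for any $\tebl\in\Delta^c$, $\pisl(\tebl,\Delta^c)\leq 1-c\int_\X \min\{\pi^y_\ell,\pi^y_{\ell-1}\}\,d\mu_\mathrm{pr}$, and the lower bound \eqref{eq:int_bd} on this integral gives $B_\ell \leq 1-c\delta$ uniformly in $\ell$. Next I would bound $A_\ell$ by integrating the first inequality of Lemma \ref{eq:kern:bound} against the conditional distribution of $\nu_\ell$ restricted to $\Delta$:
\begin{align}
A_\ell \;\leq\; \frac{s^{-\alpha\ell}}{1-p_\ell}\int_{\Delta} R_\ell(\te)\,\nu_\ell(\diff\tebl).
\end{align}
Here the key algebraic observation is that $R_\ell(\te)$ contains the factors $Q_\ell(\te)/\pi^y_j(\te)$ for $j=\ell-1,\ell$; these singularities are canceled by the fact that the marginals of $\nu_\ell$ (and hence of $\nu_\ell|_\Delta$ projected onto either coordinate) are dominated by $\mu^y_{\ell-1}$ and $\mu^y_\ell$ respectively. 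Thus the offending $1/\pi^y_j(\te)$ is absorbed, leaving a plain integral of $Q_\ell(\te)(K_{\pi,\ell}(\te)+\text{const})$ against $\mu_\mathrm{pr}$.

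The resulting integral $\int_\X Q_\ell(\te)K_{\pi,\ell}(\te)\mu_\mathrm{pr}(\diff\te)$ is then controlled by Hölder's inequality, using Assumption \ref{Ass:p_moments} to bound $\|Q_\ell\|_{L_r(\mu_\mathrm{pr})}$ and Lemma \ref{prop:distance_densities} to bound $\|K_{\pi,\ell}\|_{L_{r'}(\mu_\mathrm{pr})}$; both bounds are finite and uniform in $\ell$. Plugging back into $p_\ell \leq A_\ell/(1-B_\ell)$ and absorbing the $(1-p_\ell)^{-1}$ factor (which is harmless since $p_\ell \to 0$), I obtain the desired estimate
\begin{align}
p_\ell \;\leq\; \frac{\hat C}{c\int_\X \min\{\pi^y_\ell,\pi^y_{\ell-1}\}\,d\mu_\mathrm{pr}}\,s^{-\alpha\ell},
\end{align}
with $\hat C$ independent of $\ell$. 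Convergence $C_{r,\ell}\to C_r^\ast > 0$ as $\ell\to\infty$ follows since the denominator tends to $c$ by Lemma \ref{eq:kern:bound} and \eqref{eq:bound_p4}.

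The main obstacle I anticipate is the bookkeeping around the conditional distribution $\nu_\ell(\cdot\mid\Delta)$: one must verify that the marginal-absorption argument is rigorous despite $\nu_\ell|_\Delta$ being a restriction of a measure on $\X^2$ to a lower-dimensional set. The clean way is to write $\int_\Delta f(\te_{\ell,\ell-1})\,\nu_\ell(\diff\tebl) \leq \int_\X f(\tell)\,\mu^y_{\ell-1}(\diff\tell)$ (and analogously for the second coordinate), which holds because the $\te_{\ell,\ell-1}$-marginal of $\nu_\ell$ is exactly $\mu^y_{\ell-1}$; this dispenses with any conditional-density manipulation.
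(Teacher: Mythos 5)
Your proposal is correct and follows essentially the same route as the paper's proof: the same diagonal/off-diagonal conditioning under stationarity, the same use of Lemma \ref{eq:kern:bound} for the two conditional probabilities, the same absorption of the $1/\pi^y_j$ singularities in $R_\ell$ by the marginals of $\nu_\ell$, and the same H\"older estimate combining Assumption \ref{Ass:p_moments} with Lemma \ref{prop:distance_densities}. The only cosmetic difference is that the $(1-p_\ell)^{-1}$ normalization in $A_\ell$ cancels exactly against the $(1-p_\ell)$ weight in the recursion rather than merely being ``harmless''; this does not affect the argument.
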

We are now ready to prove implication \ref{Assumption:for_cost_thm2}.

\begin{lemma}\label{lemma:verification:2} Suppose Assumptions \emph{\ref{Assumptions_phi}} and \emph{\ref{Ass:post_is_positive}} hold. Then, for any $\ell\geq1$,  there exists a positive constant $C_v$ such that
	$$\V_{\nul}[Y_\ell]\leq C_v s^{-\beta\ell},$$
	where $\beta=\min \left\{2\alpha_q,\alpha(1-2/m) \right\},$ and $\alpha,$ $\alpha_q$, $m$ as in Assumption \ref{Assumptions_phi}.
	\begin{proof} We follow a similar argument to that of \cite[Lemma 4.8]{dodwell2015hierarchical}. From Young's inequality we have \begin{align}
		&\V_{\nul}[Y_\ell]\leq\enu\left[( \Q_\ell(\te _{\ell,\ell})-\Q_{\ell-1}(\te_{\ell,\ell-1}))^2\right]\\
		&\leq 2\enu\left[(\Q_{\ell}(\te _{\ell,\ell})-\Q_{\ell}(\te _{\ell,\ell-1}))^2\right]+2\enu\left[(\Q_{\ell}(\te _{\ell,\ell-1})-\Q_{\ell-1}(\te _{\ell,\ell-1}))^2\right].
		\end{align}
		Notice that in the case where $\Q_\ell(\cdot)$ and $\Q_{\ell-1}(\cdot)$ are the same (which could happen when the quantity of interest, seen as a functional, is mesh-independent), the second term vanishes. Otherwise, we have, using Assumption \ref{Assumptions_phi2}, that   \[\enu\left[(\Q_{\ell}(\te _{\ell,\ell-1})-\Q_{\ell-1}(\te _{\ell,\ell-1}))^2\right]\leq 2\tilde C_{q}^2(1+s^{2\alpha_q})s^{-2\alpha_q\ell}.\] As for the first term, notice that it is only non-zero whenever $\te _{\ell,\ell}\neq\te _{\ell,\ell-1}$. Thus, it can be rewritten as 
		\begin{align}
			2\enu\left[(\Q_{\ell}(\te _{\ell,\ell})-\Q_{\ell}(\te _{\ell,\ell-1}))^2\right]=2\enu\left[(\Q_{\ell}(\te _{\ell,\ell})-\Q_{\ell}(\te _{\ell,\ell-1}))^2\mathbbm{1}_{\{ \te _{\ell,\ell}\neq\te _{\ell,\ell-1}\}}\right].
		\end{align}
		In turn, applying H\"older's inequality, we can further write the above expression as 	
		\begin{align}
	2&\enu\left[(\Q_{\ell}(\te _{\ell,\ell})-\Q_{\ell}(\te _{\ell,\ell-1}))^2\mathbbm{1}_{\{ \te _{\ell,\ell}\neq\te _{\ell,\ell-1}\}}\right]\\
		\leq
	 2&\enu\left[\lv\Q_{\ell}(\te _{\ell,\ell})-\Q_{\ell}(\te _{\ell,\ell-1})\rv^{2m/2}\right]^{2/m}\enu[\mathbbm{1}_{\{ \te _{\ell,\ell}\neq\te _{\ell,\ell-1}\}}^{{m'}}]^{1/{m'}} \ \ \text{(with $m'=m/(m-2)$)}\\
		=2&\enu\left[\lv\Q_{\ell}(\te _{\ell,\ell})-\Q_{\ell}(\te _{\ell,\ell-1})\rv^{m}\right]^{2/m}\mathbb{P}_{\nul}(\te _{\ell,\ell}\neq\te _{\ell,\ell-1})^{1/m'}.
		\label{Eq:some_eq}
		\end{align}
	From Assumption \ref{ass:bdd_2nd}, it follows that one can bound the first term in Equation (\ref{Eq:some_eq}) by
		\begin{align}
		&\enu\left[|\Q_{\ell}(\te _{\ell,\ell})-\Q_{\ell}(\te_{\ell,\ell-1}|^{m}\right]^{2/m}\\
	&\leq \left( \E_{\my_{\ell}}[\qoi_\ell(\tel)^{m}]^{\frac{1}{m}}+\E_{\my_{\ell-1}}[\qoi_\ell(\tell)^{m}]^{\frac{1}{m}}\right)^2\\
		&\leq 4c_I^{-2/m}C^2_\textrm{m}.
		\end{align}  Moreover, from Lemma  \ref{lemma:prob_bnd}, we have that $\mathbb{P}_{\nul}(\te _{\ell,\ell}\neq\te _{\ell,\ell-1})\leq C_{r,\ell}s^{-\alpha \ell}$. 
		Thus, 
		\begin{align}
		\V_{\nul}[Y _\ell]\leq C_vs^{-\beta\ell},
		\end{align}
		where $C_v=8 c_I^{-2/m}C_\textrm{m}^2 \underset{\ell\in\mathbb{N}}{\max}\{C_{r,\ell}\}+4\tilde C_q^2(1+s^{2\alpha_q}).$		
\end{proof}	
	\end{lemma}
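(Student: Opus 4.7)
The plan is to bound the variance by the second moment $\enu[Y_\ell^2]$ and then apply Young's inequality with the intermediate quantity $\Q_\ell(\te_{\ell,\ell-1})$, splitting
\[
\V_{\nul}[Y_\ell]\le 2\enu[(\Q_\ell(\te_{\ell,\ell})-\Q_\ell(\te_{\ell,\ell-1}))^2]+2\enu[(\Q_\ell(\te_{\ell,\ell-1})-\Q_{\ell-1}(\te_{\ell,\ell-1}))^2].
\]
The second term depends on a single copy of the chain and is controlled purely by the discretization error of the quantity of interest. Using Assumption \ref{Assumptions_phi2} with the triangle inequality through the exact $\Q$, one gets $|\Q_\ell(\te)-\Q_{\ell-1}(\te)|\le C_q(\te)(1+s^{\alpha_q})s^{-\alpha_q\ell}$, and integrating against $\myll$ (after bounding $\pyll\le c_I^{-1}$ via Lemma \ref{lemma:bound_constant}) yields a bound of order $s^{-2\alpha_q\ell}$.

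The first term is the real work. It vanishes on the diagonal, so I rewrite it as $|\Q_\ell(\te_{\ell,\ell})-\Q_\ell(\te_{\ell,\ell-1})|^2 \car{\te_{\ell,\ell}\neq\te_{\ell,\ell-1}}$ and apply H\"older's inequality with exponents $m/2$ and $m'=m/(m-2)$ (which is why the hypothesis $m>2$ in Assumption \ref{Assumptions_phi5} is needed) to obtain
\[
\enu[(\Q_\ell(\tel)-\Q_\ell(\tell))^2 \car{\tel\neq\tell}]\le \enu[|\Q_\ell(\tel)-\Q_\ell(\tell)|^m]^{2/m}\, \mathbb{P}_{\nul}(\tel\neq\tell)^{1/m'}.
\]
The $m$-th moment is then bounded by $(\E_{\myl}[|\Q_\ell|^m]^{1/m}+\E_{\myll}[|\Q_\ell|^m]^{1/m})^2$ via Minkowski; each term is finite and uniformly bounded in $\ell$ thanks to Assumption \ref{Assumptions_phi5} together with Lemma \ref{lemma:bound_constant}. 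The desynchronization probability is precisely what Lemma \ref{lemma:prob_bnd} bounds: $\mathbb{P}_{\nul}(\tel\neq\tell)\le C_{r,\ell}s^{-\alpha\ell}$ with $C_{r,\ell}$ uniformly bounded. Raising to the power $1/m'=1-2/m$ produces a factor $s^{-\alpha(1-2/m)\ell}$.

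Combining the two contributions gives the rate $\beta=\min\{2\alpha_q,\alpha(1-2/m)\}$ with a level-independent constant, as claimed. The main obstacle is the first term: the direct strong-convergence estimate is unavailable because $\te_{\ell,\ell}$ and $\te_{\ell,\ell-1}$ need not be close when they differ, so one cannot rely on Lipschitz-type bounds on $\Q_\ell$. The workaround is the H\"older trick above, which trades a controllable moment of $\Q_\ell$ against the (small) probability of desynchronization from Lemma \ref{lemma:prob_bnd}; the price to pay is the factor $1-2/m$ in the rate, which is the reason the stronger integrability hypothesis $m>2$ enters the final exponent $\beta$.
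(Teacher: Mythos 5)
Your proposal is correct and follows essentially the same route as the paper's proof: the same Young-inequality splitting through $\Q_\ell(\te_{\ell,\ell-1})$, the same H\"older argument with exponents $m/2$ and $m'=m/(m-2)$ trading the bounded $m$-th moment of $\Q_\ell$ (Assumption \ref{Assumptions_phi5}) against the desynchronization probability of Lemma \ref{lemma:prob_bnd}, and the same resulting rate $\beta=\min\{2\alpha_q,\alpha(1-2/m)\}$. No gaps to report.
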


\section{Implementation}\label{sec:cMLMCMC}
We begin with a discussion on how to choose the optimal number of samples $N_\ell$. 
Recall that,  for $\ell=0,\dots, \lev$, we denote by $\mathcal{C}_\ell$ the total cost of producing one coupled sample of $(\qoi_{\ell-1},\qoi_\ell)$ at level $\ell$ using Algorithm \ref{algo:MLMH}. 
The total cost of the multi-level MCMC estimator is then given by 
\begin{align}\label{eq:total_ml_cost}
\mathcal{C}\left(\widehat{\Q}_{\lev,\{N_{\ell}\}}\right)=\sum_{\ell=0}^\lev \mathcal{C}_\ell N_\ell.
\end{align}	
In order to bound the statistical contribution of the total error bound, we will require, from  \eqref{eq:bound_namse}  and \eqref{eq:MSEb},that 
\begin{align}\label{eq:bound_varc}
	2(\lev+1)\sum_{\ell=0}^{\lev} C_\mathrm{mse}\frac{\V_{\nu_\ell}[Y_\ell]}{N_\ell}\leq \frac{\tol^2}{2},
\end{align}
 where  $\tol$ is some user-prescribed tolerance.  However, it is in general not a simple task to compute or estimate the constant $C_\mathrm{mse}$. We will ignore it hereafter, and aim at bounding the quantity
\begin{align}\label{eq:bound_var}
2(\lev+1)\sum_{\ell=0}^{\lev} \frac{\V_{\nu_\ell}[\hat{Y}_\ell]}{N_\ell}\leq \frac{\tol^2}{2}.
\end{align}
To that end we will use the so-called \emph{batched means estimator}  of $\V_{\nu_{\ell}}[\hat{Y}_\ell]$ denoted by $\hat \sigma^2_\ell$ (see \cite{flegal2011implementing} for further details).  In this case, treating $N_\ell$ as a real number and minimizing \eqref{eq:total_ml_cost} subject to \eqref{eq:bound_var}, gives the optimal samples sizes \begin{align}\label{Eq:nl}
	N_\ell= \left \lceil 2\tol^{-2}\sqrt{ \hat\sigma^2_\ell/ \mathcal{C}_\ell}\left(\sum_{j=0}^{\lev}\sqrt{ \hat\sigma^2_j\mathcal{C}_j}\right)\right \rceil,
\end{align}
where $\lceil \cdot\rceil$ is the ceiling function.  Lastly, we also need to ensure that the second contribution to the total error, i.e., the discretization bias at level $\lev$, is such that 
\begin{align}\label{Eq:cond_we}
	\sqrt{2}\lv \E_{\muy_\lev}[\Q_{\lev}]-\E_{\muy}[\Q]\rv \leq \frac{\tol}{\sqrt{2}}. 
\end{align}
Notice that from  \ref{Assumption:for_cost_thm1} it follows
\begin{align}
	\lv \E_{\muy_\lev}[\Q_{\lev}]-\E_{\muy}[\Q]\rv& =\lv \sum_{j=\lev+1}^{\infty}\E_{\muy_j}[\Q_{j}]-\E_{\muy_{j-1}}[\Q_{j-1}]\rv
\\ &	\approx\frac{\lv [\widehat{\Q}_{\lev}-\widehat{\Q}_{\lev-1}]\rv}{1-s^{-\alpha_w}}.
\label{Eq:check_we}
\end{align}
Thus, to achieve a total (estimated) MSE of the ML-MCMC estimator  less than $\tol^2$, we need to check that \begin{align}
2(\lev+1)\left(\sum_{\ell=0}^{\lev} \frac{\hat \sigma_\ell^2}{N_\ell}\right) + 2\left(\frac{\lv[\widehat{\Q}_{\lev}-\widehat{\Q}_{\lev-1}\rv}{1-s^{-\alpha_w}}\right)^2 \leq \tol^2. \label{eq:total_error} 
\end{align}

In practice, the set of parameters $\mathcal{P}:=\{C_w,\alpha_w,\{\hat\sigma_\ell^2\}_{\ell=0}^\lev,C_\sigma,\beta,C_\gamma,\gamma\}$ need to be estimated with a preliminary run over $\lev_0$ levels, using $\tilde N_\ell,\ \ell=0,1,\dots,\lev_0$ samples per level.  However, the main disadvantage of this procedure is that for computationally expensive problems, this screening phase can be quite inefficient. In particular, if $\lev_0$ is chosen too large, then the screening phase might turn out to be more expensive than the overall ML-MCMC simulation on the optimal hierarchy $\{0,1,\dots,\lev\}$. On the other hand, if $
\lev_0$ (or $\tilde N_\ell$) is chosen too small, the extrapolation (or estimation) of the values of $\mathcal{P}$  might be quite unreliable, particularly at higher levels. In the MLMC literature, one way of overcoming these issues is with the so-called continuation Multi-level Monte Carlo method \cite{collier2015continuation}. We will present a continuation-type ML-MCMC (C-ML-MCMC) algorithm in the following subsection, based on the works \cite{collier2015continuation,pisaroni2017continuation}.
\subsection{A continuation-type ML-MCMC}
 The key idea behind this method is to iteratively implement a ML-MCMC algorithm with a sequence of decreasing tolerances while, at the same time, progressively improving the estimation of the problem dependent parameters $\mathcal{P}$. As presented before, these parameters directly control the number of levels and sample sizes. Following \cite{collier2015continuation}, we  introduce the family of tolerances $\tol_i, \ i=0,1,\dots,$ given by \begin{align}
  \tol_i=\begin{cases}
  r_1^{i_E-i}r_2^{-1}\tol & i<i_E,\\
  r_2^{i_E-i}r_2^{-1}\tol & i\geq i_E,
  \end{cases}
  \end{align}
where $r_1\geq r_2 >1,$ so that $\tol_{i_E-1}\geq\tol>\tol_{i_E}$, with \begin{align}
i_E :=\left \lfloor \frac{-\log(\tol)+\log(r_2)+\log(\tol_0)}{\log(r_1)} \right \rfloor. \label{eq:I_e}
\end{align}
The idea is then to iteratively run the ML-MCMC algorithm for each of the tolerances $\tol_i,$ $i=0,1,\dots$ until the algorithm achieves convergence, based on the criterion defined in the previous subsection. Iterations for which $i<i_E$, are used to obtain increasingly more accurate estimates of $\mathcal{P}$. Notice that when $i=i_E$, the problem is solved with a slightly smaller tolerance $r_2^{-1}\tol$ for some carefully chosen $r_2$. Solving at this slightly smaller tolerance is done in order to prevent any extra unnecessary iterations due to the statistical nature of the estimated quantities.  Furthermore, if the algorithm has not converged at the $i_E^\mathrm{th}$ iteration, it keeps running for even smaller tolerances $\tol_i$, $i>i_E$, to account for cases where the estimates of $\mathcal{P}$ are unstable. Thus,  at the $i^\mathrm{th}$ iteration of the C-ML-MCMC algorithm, we run the Algorithm \ref{algo:MLMH} with an iteration-dependent number of levels $\lev_i$, where $\lev_i$ is obtained by solving the following discrete constrained optimization problem:

\begin{align}
\begin{cases}
&\underset{\lev_{i-1}\leq\lev\leq\lev_\mathrm{max}}{\mathrm{arg}\   \mathrm{min}} \left\{2\tol_i^{-2}2(\lev+1)\left(\sum_{j=0}^{\lev_i}\sqrt{ C_\beta s^{-\beta_j}\mathcal{C}_j}\right)^2 \right\}, \\
&\textrm{s.t.} \quad  C_ws^{-\alpha_w\lev_i}\leq \frac{\tol_i}{\sqrt{2}}. 
\end{cases}\label{eq:L[i]}
\end{align}
Here,  $\lev_{-1}=\lev_\mathrm{0}$ is a given minimum number of levels,  $\lev_\mathrm{\max}$, is chosen as the maximum number of levels given a computational budget (which could be dictated, for example, by the minimum mesh size imposed by memory or computational restrictions). Furthermore, notice that \eqref{eq:L[i]} is easily solved by exhaustive search. 

We now have everything needed to implement the C-ML-MCMC algorithm, which we present in the listing \ref{algo:c_MLMH}.
\begin{algorithm}
	\caption{Continuation ML-MCMC}\label{algo:c_MLMH}
	\begin{algorithmic}[1]
		\Procedure{C-ML-MCMC}{$\{\pi^y_\ell\}_{\ell=0}^\lev,Q,\tilde N,\lev_0,\lev_\mathrm{\max},\{\nu_\ell^0\}_{\ell=0}^\lev,\tol_0,\tol,r_1,r_2$}
		\State \codecomm{Preliminary run}
				\State Compute $i_E$ according to Equation \eqref{eq:I_e}. Set $N_\ell=\tilde N$, $\ell=0,1,\dots,\lev_0$,
		\State $\{\{ \te^n_{\ell,\ell}\}_{n=0}^{\tilde N},\{ \te^n_{\ell,\ell-1}\}_{n=0}^{\tilde N_\ell}\}_{\ell=0}^{\lev_0}$=\texttt{ML-MCMC}$(\{\pi^y_\ell\}_{\ell=0}^{\lev_0},Q,\{ N_\ell\}_{\ell=0}^{\lev_0},\{\nu_\ell^0\}_{\ell=0}^{\lev_0})$.
		\State Compute estimates for the parameters $\mathcal{P}$ using least squares fit

		\State set $i=1$ and $\mathsf{te}=\infty$. 
		\State \codecomm{ Starts continuation algorithm}
		\While{$i<i_E$ \textbf{or} $\mathsf{te}>\tol$}
		\State Update tolerance  $\tol_i=\tol_{i-1}/r_k,$ where $k=1$ if $i< i_E$ and $k=2$ otherwise.
		\State Compute $\lev_i=\lev_i(\lev_{i-1},\lev_\mathrm{max},\tol_i,\mathcal{P})$ using \eqref{eq:L[i]}
		\State Compute $N_\ell=N_\ell(\lev_i,\tol_i,\mathcal{P})$ for $\ell=0,1,\dots,\lev_i$, using \eqref{Eq:nl}. 
		\State \codecomm{ Q can be constructed using samples from previous iterations}
		\State $\{\{ \te^n_{\ell,\ell}\}_{n=0}^{N_\ell},\{ \te^n_{\ell,\ell-1}\}_{n=0}^{N_\ell}\}_{\ell=0}^{\lev_i}$=\texttt{ML-MCMC}{$(\{\pi^y_\ell\}_{\ell=0}^{\mathsf{L}_i},Q,\{N_\ell\}_{\ell=0}^{\mathsf{L}_i},\{\nu^0_\ell\}_{\ell=0}^{\lev_i})$)}
		\State Update estimates for $\mathcal{P}$ using least squares fit 
		\State Update total error  $\mathsf{te}=2(\lev+1)\left(\sum_{\ell=0}^{\lev_i}\hat\sigma^2_\ell/N_\ell\right) +2\left(C_ws^{-\alpha_w \lev_i}\right)^2$
		\State $i=i+1$
		\EndWhile
		\State Return $ \widehat \Q_{\mathsf{L},\{N_\ell\}_{\ell=0}^\lev}$ computed with \eqref{eq:ML-MLMC_estimator}.
		\EndProcedure
	\end{algorithmic}
\end{algorithm}

\section{Numerical experiments}\label{sec:Numerical_experiments}

We present first  two ``sanity check'' experiments aimed at numerically verifying the theory presented in previous sections. 

In the following two experiments we will compare our proposed multi-level MCMC algorithm to that of \cite{dodwell2015hierarchical}, which, by construction, does not satisfy our Assumption \ref{Ass:essinf}. The aim of these experiments is to verify the theoretical results of the previous sections, as well as to provide a setting for which our methods might be better suited than the sub-sampling approach of \cite{dodwell2015hierarchical}. For ease of exposition, we will consider as a quantity of interest $\Q(\te)=\te, \quad \te\sim \mu^y$, and we will assume that the cost of evaluating the posterior density at each level grows as $2^{\gamma \ell}$, with $\gamma=1$. For both experiments, we implement the sub-sampling ML-MCMC algorithm of \cite{dodwell2015hierarchical} with a level-dependent sub-sampling rate $t_\ell:=\min\left\{1+2\sum_{k=0}^{N_\ell}\hat \varrho_k,5\right\}$, where $\hat \varrho_k$ is the so-called \emph{lag-$k$ auto-correlation time}  and  $1+2\sum_{k=0}^{N_\ell}\hat\varrho_k$ is the so-called \emph{integrated auto-correlation time} \cite{brooks2011handbook}.  

\subsection{Nested Gaussians}\label{ss:nested}
We begin with a scenario for which both ML-MCMC methods can be applied. In this case we aim at sampling from the family of posteriors $\mu^y_\ell=\mathcal{N}\left(1,1+2^{-\ell}\right),$ $\ell=0,1,2,\dots, $ which approximate $\mu^y=\mathcal{N}\left(1,1\right)$ as $\ell\to\infty.$  For the ML-MCMC method proposed in the current work, we will use a fixed proposal across all levels given by $Q_\ell=Q=\mathcal{N}(1,3)$. Such  proposal is chosen to guarantee that Assumption \ref{Ass:essinf} is fulfilled.  The family of posteriors and the proposal $Q$ used in our ML-MCMC algorithm are depicted in Figure \ref{fig:postpropnes}. For both algorithms, the proposal distribution at level $\ell=0$ is a random walk Metropolis proposal $Q_0(\te_0^n,\cdot)=\mathcal{N}(\te_0^n,1)$. This proposal is chosen to guarantee an acceptance rate of about 40\%, the value deemed close to optimum for MCMC in one dimension \cite{brooks2011handbook}.  

\begin{figure}[h]
	\centering
	\input{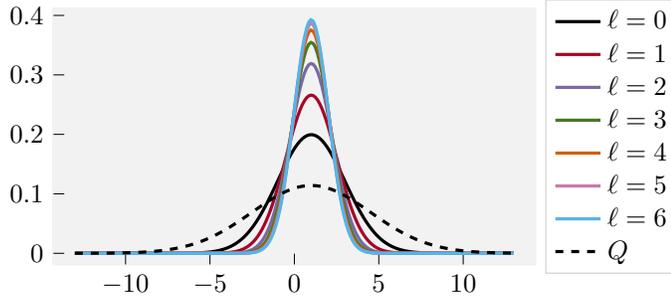}
	\caption{Family of posteriors $\mu^y_\ell$ and fixed proposal distribution $Q$ for the nested Gaussians example.}
	\label{fig:postpropnes}
\end{figure}

 As a \emph{sanity check}, we begin by verifying that both algorithms target the right marginal distribution at different levels. This can be seen in Figure \ref{fig:histsfnes}, where the histograms of samples obtained with a simple ML-MCMC algorithm with proposal $Q$ and prescribed number of levels $\lev=7$ and number of samples $N_\ell=50000$ for $\ell=0,1,\dots,\lev$ (top row) and the algorithm of \cite{dodwell2015hierarchical} (bottom row) are shown for levels $\ell=0,3,6$. The true posterior at level $\ell$ is shown in red. As it can be seen, both methods are able to sample from the right marginal distribution for the family of posteriors considered here-in. 

\begin{figure}[h]
	\centering
	\input{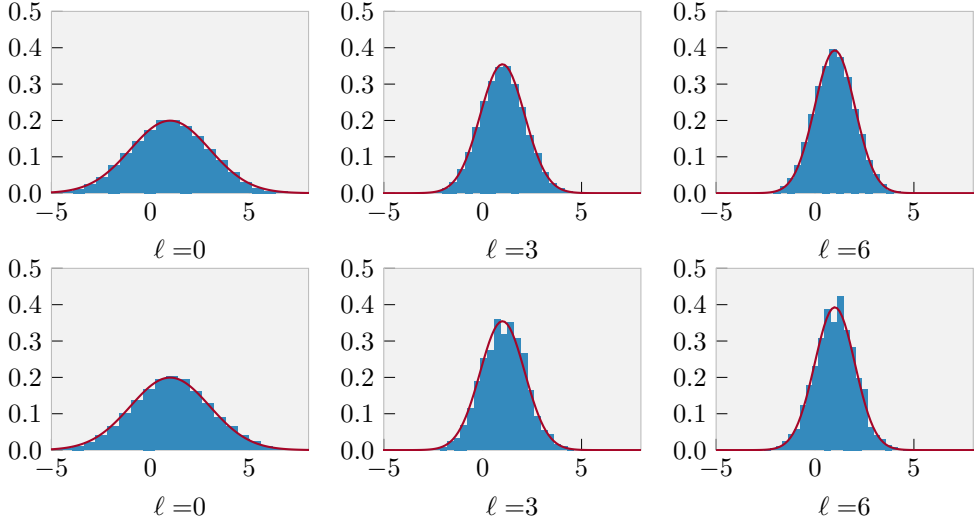}
	\caption{True posterior $\mu^y_\ell$ for different levels $\ell=0,3,6$ and histogram of the samples of $\te_\ell\sim \muy_\ell$ obtained with  the ML-MCMC algorithm described herein with $Q_\ell=\mathcal{N}(1,3)$ (Top row)and  the sub-sampling ML-MCMC algorithm (Bottom row). As we can see, both methods are able to obtain samples from the right posterior distribution.}
	\label{fig:histsfnes}
\end{figure}
We now aim at verifying the rates presented in Theorem \ref{thm:cost_thm_schichl_1}.  To that end, we run the ML-MCMC algorithm 100 independent times. For each independent run, we  obtained 50,000 samples on each level and investigate the behavior of $|\E_{\nul}[Y_\ell]|$ (Figure \ref{fig:edq100bnes} (left) ) and $\V_{\nul}[Y_\ell]$ (Figure \ref{fig:edq100bnes} (right) ) with respect to the level $\ell$. As it can be seen from Figure \ref{fig:edq100bnes}, both $|\E_{\nul}[Y_\ell]|$ and $\V_{\nul}[Y_\ell]$ decay with respect to $\ell$ with nearly the same estimated rate $\approx -1.34$  for the ML-MCMC algorithm discussed in this current work, close to the predicted one in Theorem \ref{thm:cost_thm_schichl_1}. It can be seen, however, from Figure \ref{fig:edq100bnes} (right), that the variance decay of the sub-sampling algorithm is slightly better than the one obtained by the method presented herein. This, in turn, results in a smaller overall sample size at each level for a given particular error tolerance, as it can be seen in Figure \ref{fig:syncnes} (left). We believe that this  difference in rate is due (i) to the slightly higher synchronization rate of the sub-sampling ML-MCMC algorithm (Figure \ref{fig:syncnes} (right)) and (ii) to the fact that the convergence rate of the marginal chain in the sub-sampling algorithm also increases with level, which is not necessarily the case for our method. These results suggest that, for this particular case, it is more cost-efficient to use the sub-sampling ML-MCMC algorithm.

\begin{figure}[h]
	\centering
\setcounter{subfigure}{0}

\begin{tikzpicture}

\definecolor{color0}{rgb}{0.203921568627451,0.541176470588235,0.741176470588235}
\definecolor{color1}{rgb}{0.650980392156863,0.0235294117647059,0.156862745098039}

\begin{groupplot}[group style={group size=2 by 1, horizontal sep=3cm,vertical sep=3.4cm}]

\nextgroupplot[
axis background/.style={fill=white!95!black},
width=5cm,
height=5cm,
axis line style={white!73.7254901960784!black},
legend cell align={left},
legend columns=2,
legend style={fill opacity=0, draw opacity=1, text opacity=1, at={(0.5,-0.4)}, anchor=north, draw=white!80!black, fill=white!93.3333333333333!black},
log basis y={2},
tick pos=left,
title={\(\displaystyle \mathbb{E}_{\nu_\ell}[Y_\ell]\) vs. level },
x grid style={white!69.8039215686274!black},
xlabel={\(\displaystyle \ell\)},
xmajorgrids,
xmin=-0.3, xmax=6.3,
xtick style={color=black},
xtick={-1,0,1,2,3,4,5,6,7},
xtick={0,1,2,3,4,5,6,7},
y grid style={white!69.8039215686274!black},
ylabel={\(\displaystyle \mathbb{E}_{\nu_\ell}[Y_\ell]\)},
ymajorgrids,
ymin=0.000837053736122753, ymax=1.41284188745246,
ymode=log,
ytick style={color=black},
ytick={0.0001220703125,0.00048828125,0.001953125,0.0078125,0.03125,0.125,0.5,2,8},
yticklabels={\(\displaystyle {2^{-13}}\),\(\displaystyle {2^{-11}}\),\(\displaystyle {2^{-9}}\),\(\displaystyle {2^{-7}}\),\(\displaystyle {2^{-5}}\),\(\displaystyle {2^{-3}}\),\(\displaystyle {2^{-1}}\),\(\displaystyle {2^{1}}\),\(\displaystyle {2^{3}}\)}
]

\addplot [thick, color0]
table {%
0 0.999255522000997
1 0.237356203387855
2 0.0705949080530698
3 0.0232602944438356
4 0.0087053812514944
5 0.00379635329732477
6 0.00134787019428512
};
\addlegendentry{Subsampled}
\addplot [thick, color0, dashed, forget plot]
table {%
0 1.00785088005674
1 0.260013115787397
2 0.0801236591313259
3 0.0267421802049089
4 0.00985392979271535
5 0.00413081795326141
6 0.00152232812704974
};
\addplot [thick, color0, dashed, forget plot]
table {%
0 0.990660163945255
1 0.214699290988313
2 0.0610661569748136
3 0.0197784086827623
4 0.00755683271027345
5 0.00346188864138813
6 0.00117341226152049
};
\addplot [thick, color1]
table {%
0 0.995494064724075
1 0.247556503254246
2 0.0770577786996453
3 0.0259985898728209
4 0.00933362643492613
5 0.00389488845740258
6 0.00143793785884097
};
\addlegendentry{Gaussian}
\addplot [thick, color1, dashed, forget plot]
table {%
0 1.00423156364899
1 0.271732636467071
2 0.0872863973610373
3 0.0297735574004615
4 0.0106228713228925
5 0.00424739529516164
6 0.00163109763390657
};
\addplot [thick, color1, dashed, forget plot]
table {%
0 0.986756565799161
1 0.223380370041421
2 0.0668291600382533
3 0.0222236223451803
4 0.00804438154695977
5 0.00354238161964352
6 0.00124477808377537
};

\addplot [thick, black, mark=asterisk, mark size=3, mark options={solid}]
table {%
	0 0.375794261641329
	1 0.147870070921231
	2 0.0581849168711342
	3 0.0228949951143542
	4 0.00900887772078868
	5 0.00354487421302129
	6 0.00139486111096235
};

\addlegendentry{slope -1.34}

\nextgroupplot[
width=5cm,
height=5cm,
axis background/.style={fill=white!95!black},
axis line style={white!73.7254901960784!black},
legend cell align={left},
legend columns=2,
legend style={fill opacity=0, draw opacity=1, text opacity=1, at={(0.5,-0.4)}, anchor=north, draw=white!80!black, fill=white!93.3333333333333!black},
log basis y={2},
tick pos=left,
tick pos=left,
title={\(\displaystyle \mathbb{V}_{\nu_\ell}[Y_{\ell}]\) vs. level },
x grid style={white!69.8039215686274!black},
xlabel={\(\displaystyle \ell\)},
xmajorgrids,
xmin=-0.3, xmax=6.3,
xtick style={color=black},
y grid style={white!69.8039215686274!black},
xtick={-1,0,1,2,3,4,5,6,7},
xtick={0,1,2,3,4,5,6,7},
ylabel={\(\displaystyle \mathbb{V}_{\nu_\ell}[Y_{\ell}]\)},
ymajorgrids,
ymin=0.0288793113963404, ymax=129.545416462658,
ymode=log,
ytick={0.00390625,0.015625,0.0625,0.25,1,4,16,64},
ytick style={color=black}
]
\addplot [thick, color0]
table {%
	0 85.8168286368916
	1 21.3676868652677
	2 5.4376222973837
	3 1.41505499214031
	4 0.38156293170155
	5 0.123788814737785
	6 0.0469811288344201
};
\addlegendentry{Subsampled}
\addplot [thick, color0, dashed, forget plot]
table {%
	0 87.2706638863397
	1 22.0741796302055
	2 5.77563745961365
	3 1.54953411023046
	4 0.430164940098847
	5 0.138924198686729
	6 0.0516389751419348
};
\addplot [thick, color0, dashed, forget plot]
table {%
	0 84.3629933874435
	1 20.6611941003299
	2 5.09960713515375
	3 1.28057587405017
	4 0.332960923304252
	5 0.108653430788841
	6 0.0423232825269055
};
\addplot [thick, color1]
table {%
	0 86.8108362187603
	1 21.3955114300678
	2 5.57058918374557
	3 1.60313013623767
	4 0.565732783831644
	5 0.244266608451167
	6 0.10337395570837
};
\addlegendentry{Gaussian}
\addplot [thick, color1, dashed, forget plot]
table {%
	0 88.3953748061814
	1 22.4238844443854
	2 6.07325493001747
	3 1.78689978368949
	4 0.621088094418137
	5 0.25907585165678
	6 0.108218990041471
};
\addplot [thick, color1, dashed, forget plot]
table {%
	0 85.2262976313393
	1 20.3671384157502
	2 5.06792343747367
	3 1.41936048878585
	4 0.510377473245151
	5 0.229457365245554
	6 0.0985289213752702
};
\addplot [thick, black, dash pattern=on 1pt off 3pt on 3pt off 3pt]
table {%
	0 38.965617156085
	1 12.5353978115352
	2 4.03268855370616
	3 1.29733233964285
	4 0.417357099877298
	5 0.134265479627173
	6 0.0431937518849319
};
\addlegendentry{slope -1.64}
\addplot [thick, black, mark=asterisk, mark size=3, mark options={solid}]
table {%
	0 22.9801531349097
	1 9.0776351647627
	2 3.58585339709312
	3 1.4164861609946
	4 0.559541292434246
	5 0.221030368358241
	6 0.0873115610896949
};
\addlegendentry{slope -1.34}

\end{groupplot}

\end{tikzpicture}
	\caption{(Left) $|\E_{\nul}[Y_{\ell}]|$ Vs. level. (Right) $\V_{\nul}[Y_{\ell}]$ Vs. level. In both figures, the rates were estimated over 100 independent runs, with 50,000 samples per level, on each run. Solid lines indicate the average value, dashed lines indicate 95\% confidence intervals.  }
	\label{fig:edq100bnes}
\end{figure}
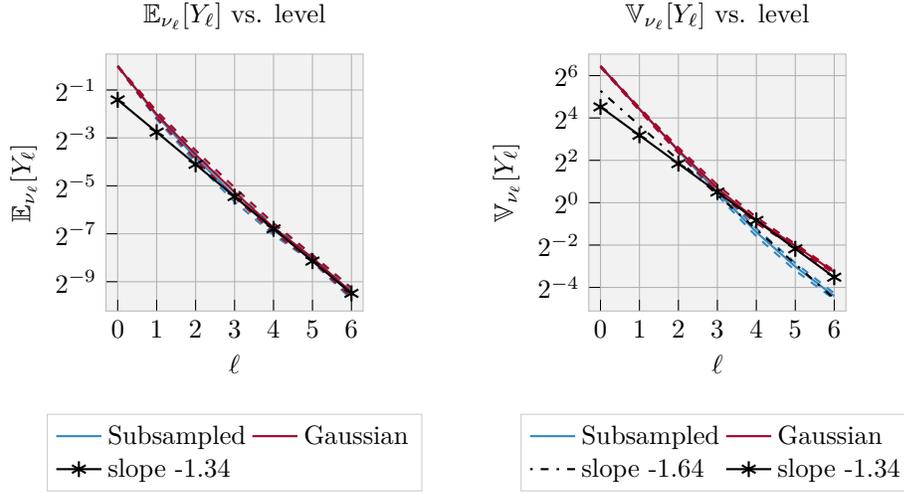

We plot sample size vs level (Figure \ref{fig:syncnes} (left)) and synchronization rate vs. level (Figure \ref{fig:syncnes} (right)). Both figures were obtained from 100 independent runs: solid lines indicate the average value and dashed lines indicate 95\% confidence intervals.  The computation of $N_\ell$ for each level $\ell=0,1,
\dots,\lev$ was done by estimating  $\hat\sigma_\ell$ with 50,000 samples per level and a tolerance $
\tol=0.07$. It can  be seen from Figure \ref{fig:syncnes} (left) that the sub-sampling algorithm requires a smaller number of samples per level.  From Figure \ref{fig:syncnes} (right) we can see that both algorithms tend to a synchronization rate of 1, as expected. It can be seen that the sub-sampling algorithm provides a slightly higher synchronization rate for the problem at hand.

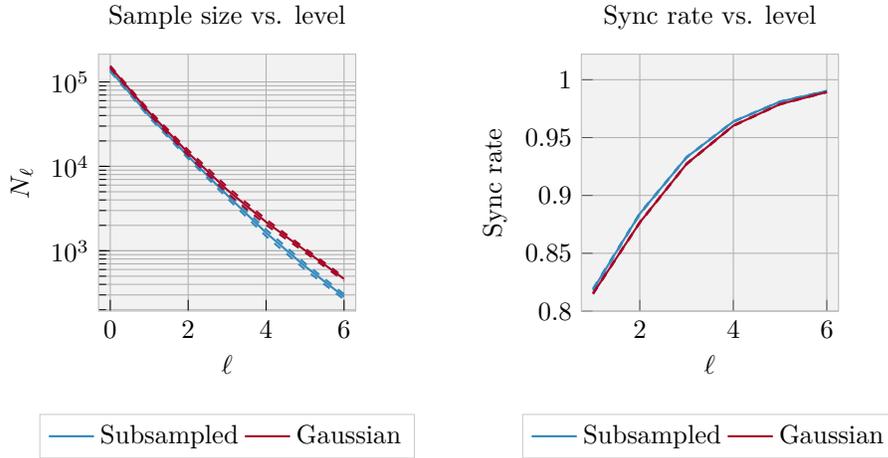
\begin{figure}[h]
	\centering
\setcounter{subfigure}{0}

\begin{tikzpicture}

\definecolor{color0}{rgb}{0.203921568627451,0.541176470588235,0.741176470588235}
\definecolor{color1}{rgb}{0.650980392156863,0.0235294117647059,0.156862745098039}

\begin{groupplot}[group style={group size=2 by 1, horizontal sep=3cm,vertical sep=3.4cm}]

\nextgroupplot[
width=5cm,
height=5cm,
axis background/.style={fill=white!95!black},
axis background/.style={fill=white!95!black},
axis line style={white!73.7254901960784!black},
legend cell align={left},
legend columns=5,
legend style={fill opacity=0, draw opacity=1, text opacity=1, at={(0.5,-0.4)}, anchor=north, draw=white!80!black, fill=white!93.3333333333333!black},
log basis y={10},
tick pos=left,
title={Sample size  vs. level },
x grid style={white!69.8039215686274!black},
xlabel={\(\displaystyle \ell\)},
xmajorgrids,
xmin=-0.3, xmax=6.3,
xtick style={color=black},
y grid style={white!69.8039215686274!black},
ylabel={\(\displaystyle N_\ell\)},
ymajorgrids,
xminorgrids,
yminorgrids,
ymin=192.987726358236, ymax=213192.71794971,
ymode=log,
ytick style={color=black}
]
\addplot [thick, color0]
table {%
	0 137084.92
	1 39735.2
	2 13133.97
	3 4556.35
	4 1633.14
	5 655.33
	6 285.64
};
\addlegendentry{Subsampled}
\addplot [thick, color0, dashed, forget plot]
table {%
	0 141018.660037936
	1 41425.2780031007
	2 13895.1660132199
	3 4895.32994768875
	4 1778.97140222765
	5 710.028811120576
	6 305.905675200239
};
\addplot [thick, color0, dashed, forget plot]
table {%
	0 133151.179962064
	1 38045.1219968993
	2 12372.7739867801
	3 4217.37005231125
	4 1487.30859777235
	5 600.631188879424
	6 265.374324799761
};
\addplot [thick, color1]
table {%
	0 150704.04
	1 43534.52
	2 14469.78
	3 5274.21
	4 2190.98
	5 1010.27
	6 464.68
};
\addlegendentry{Gaussian}
\addplot [thick, color1, dashed, forget plot]
table {%
	0 155039.783687786
	1 45725.908188639
	2 15521.9005241733
	3 5719.21925316504
	4 2353.70541547232
	5 1064.44557782681
	6 487.123278252305
};
\addplot [thick, color1, dashed, forget plot]
table {%
	0 146368.296312214
	1 41343.131811361
	2 13417.6594758267
	3 4829.20074683496
	4 2028.25458452768
	5 956.094422173193
	6 442.236721747695
};

\nextgroupplot[
width=5cm,
height=5cm,
axis background/.style={fill=white!95!black},
axis line style={white!73.7254901960784!black},
legend cell align={left},
legend columns=5,
legend style={fill opacity=0, draw opacity=1, text opacity=1, at={(0.5,-0.4)}, anchor=north, draw=white!80!black, fill=white!93.3333333333333!black},
tick pos=left,
title={Sync rate vs. level},
x grid style={white!69.8039215686274!black},
xlabel={\(\displaystyle \ell\)},
xmajorgrids,
xmin=0.75, xmax=6.25,
xtick style={color=white!33.3333333333333!black},
y grid style={white!69.8039215686274!black},
ylabel={Sync rate},
ymajorgrids,
ymin=0.8, ymax=1.02209382859157,
ytick style={color=white!33.3333333333333!black}
]
\addplot [thick, color0]
table {%
	1 0.8186514
	2 0.8841564
	3 0.9329356
	4 0.9638496
	5 0.9810664
	6 0.9903052
};
\addlegendentry{Subsampled}
\addplot [thick, color0, dashed, forget plot]
table {%
	1 0.819533597241928
	2 0.884739743780946
	3 0.93331930796605
	4 0.964104622727516
	5 0.981210557838776
	6 0.990401873303944
};
\addplot [thick, color0, dashed, forget plot]
table {%
	1 0.817769202758073
	2 0.883573056219054
	3 0.93255189203395
	4 0.963594577272485
	5 0.980922242161224
	6 0.990208526696057
};
\addplot [thick, color1]
table {%
	1 0.8152108
	2 0.876609
	3 0.9272376
	4 0.9602594
	5 0.9788738
	6 0.989338
};
\addlegendentry{Gaussian}
\addplot [thick, color1, dashed, forget plot]
table {%
	1 0.815631763622093
	2 0.877030937107939
	3 0.9276469273235
	4 0.960588179098013
	5 0.979101586538586
	6 0.98950044368378
};
\addplot [thick, color1, dashed, forget plot]
table {%
	1 0.814789836377907
	2 0.876187062892061
	3 0.9268282726765
	4 0.959930620901987
	5 0.978646013461414
	6 0.98917555631622
};

\end{groupplot}

\end{tikzpicture}
	\caption{(Left) Number of samples, Vs. level for both algorithms.(Right) Synchronization rate vs level for both algorithms.  }
	\label{fig:syncnes}
\end{figure}

Lastly, we perform some robustness  experiments for our C-ML-MCMC algorithm.  To that end, we run Algorithm \ref{algo:c_MLMH} using the same level independent proposals $Q_\ell=Q=\mathcal{N}(1,3)$ for three different prescribed tolerances $\tol=\{0.025,0.05,0.1\}$. The algorithm is run for a total of 100 independent times. At each run $k$, we compute the total squared error of the  ML estimator obtained from the $k^\mathrm{th}$ run of the C-ML-MCMC algorithm given by\begin{align}\mathsf{er}_k^2:=\left(\widehat\Q_{\mathsf{L},\{N_{\ell}\}_{\ell=0}^\mathsf{L}}^{(k)}-\my(\Q)\right)^2\label{eq:erk}\end{align} and plot it in Figure \ref{fig:reliabilitynested}. As we can see, we obtain estimators whose mean square error  is less than the prescribed tolerance, as desired. This evidences the robustness of Algorithm \ref{algo:c_MLMH} when computing quantities of interest for a given tolerance.

\begin{figure}
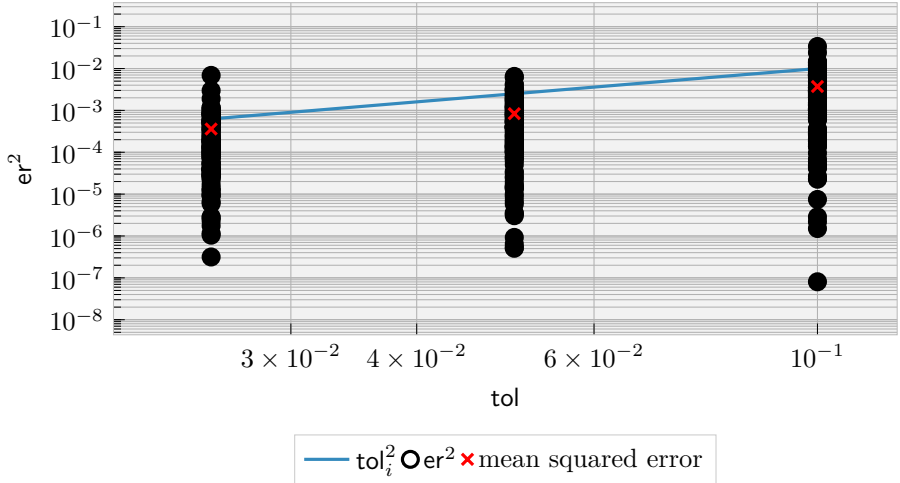

	\centering
	\include{figures/reliability_nested100}
	\caption{Total squared error $\mathsf{er}^2$ vs $\tol$ for the nested Gaussians example. Here, we used 100 independent runs of the full C-ML-MCMC algorithm for 3 different tolerances; $\tol=0.025,0.05,0.1$ (black circles). The red cross denotes the estimated MSE over those 100 runs.  }
	\label{fig:reliabilitynested}
\end{figure}
%
%
\subsection{Shifting Gaussians}\label{ss:moving}
We now move to a slightly more challenging problem, which is better suited for our proposed method. In this case, we aim at sampling from the family of posteriors $\mu^y_\ell=\mathcal{N}\left(2^{-\ell+2},1\right),$ $\ell=0,1,2,\dots, \lev,$ which approximate $\mu^y=\mathcal{N}\left(0,1\right)$ as $\ell\to\infty.$  Once again, for the ML-MCMC method proposed in the current work, we will use a fixed proposal across all levels given by $Q_\ell=Q=\mathcal{N}(2,3)$. Such  a proposal is chosen to guarantee that Assumption \ref{Ass:essinf} is fulfilled. The posterior and proposal densities are shown in Figure \ref{fig:postprop}. Just as in experiment \ref{ss:nested} the proposal distribution at level $\ell=0$ for both algorithms is a random walk Metropolis proposal $Q_0(\te_0^n,\cdot)=\mathcal{N}(\te_0^n,1)$. This proposal is chosen to guarantee an acceptance rate of about 40\%.  
\begin{figure}[h]
	\centering
	\scalebox{1}{\input{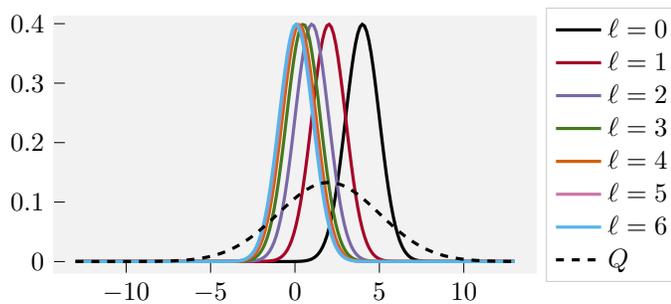}}  

	\caption{Illustration of the posterior densities $\pi_\ell$ and the proposal $Q$ for the moving Gaussians example. }
	\label{fig:postprop}
\end{figure}
Once again, we begin by investigating the correctness of the corresponding marginals. To that end we run both algorithms for $\lev=6$, obtaining $50,000$ samples per level  and plot the resulting histograms of $
\muy_\ell$ for levels $
\ell=0,3,6$.  Such results are presented in Figure \ref{fig:hists}. As it can be seen, the ML-MCMC presented herein (Figure \ref{fig:hists}, top row) is able to  sample from the correct marginals. On the contrary, the sub-sampling ML-MCMC algorithm is not able to produce samples from the correct distributions, at least for the number of samples considered, as it can be seen in Figure \ref{fig:hists} (Bottom row). We believe that this is due to  Assumption \ref{Ass:essinf} not being satisfied since there is a very small \textit{overlap} between the posterior at level 0 and the posteriors at higher levels.  Clearly, sampling from the wrong marginal distribution will in turn result in biased estimators when using the sub-sampling method \cite{dodwell2015hierarchical}.
\begin{figure}[h]
	\begin{center}
		\input{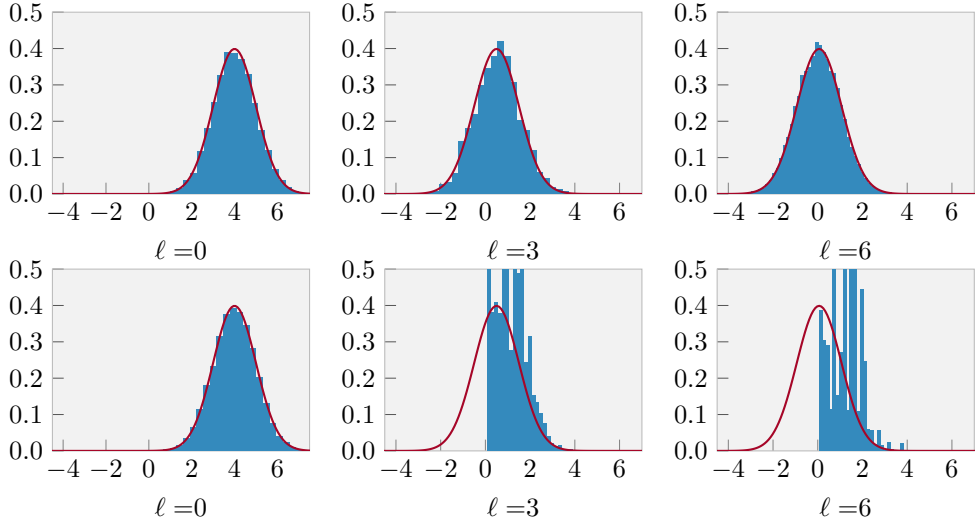}
		\caption{Sample histograms for one ML-MCMC run at levels $\ell=0,3,6$ (Top row): Fixed  Gaussian proposal . (Bottom row): Sub-sampling approach. As it can be seen, the sub-sampling approach is not able to properly sample from the posterior at higher levels. }
		\label{fig:hists}	
	\end{center}
\end{figure}

We now proceed to verify the converge rates stated in Theorem \ref{thm:cost_thm_schichl_1}. Notice that for this particular setting we have $|\E_{\mu^y_\ell}[\Q_\ell]-\E_{\muy}[\Q]|=2^{-\ell+1}$. 
 We  run Algorithm \ref{algo:MLMH} 100 independent times, obtaining 50,000 samples on each level for every run.  The accuracy of the theoretical rates in Theorem \ref{thm:cost_thm_schichl_1} is numerically verified in Figure \ref{fig:edq100b}.  However, as it can be seen in Figure \ref{fig:edq100b} (Top left) the sample mean of $\Q_\ell$ obtained with the sub-sampling algorithm does not decay as $2^{-\ell},$ which confirms the bias of the sub-sampling ML-MCMC algorithm.   The decay rates $\alpha_w$ and $\beta$, corresponding to the decay in weak and strong error, respectively, for the ML-MCMC algorithm with fixed proposals are verified to be 1, as theoretically expected, in Figure \ref{fig:edq100b}	 (Top right and Bottom left). The optimal number of samples per level is presented in Figure \ref{fig:edq100b} (Bottom left). Once again, the sub-sampling ML-MCMC provides a smaller number of samples and variances than the method presented herein, however, at the price of a biased estimator. Furthermore, it can be seen from Figure \ref{fig:sync} that the synchronization rate of both methods tends to 1 with $\ell$, as expected.

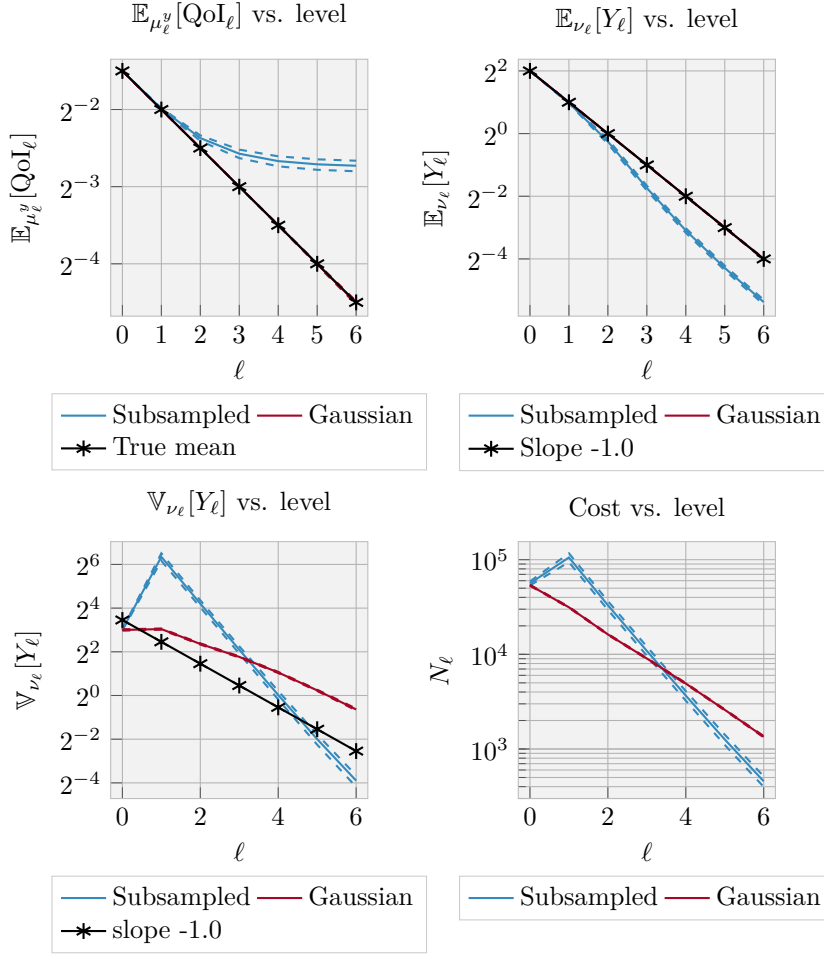
\begin{figure}[h]
\begin{center}
\begin{tikzpicture}

\definecolor{color0}{rgb}{0.203921568627451,0.541176470588235,0.741176470588235}
\definecolor{color1}{rgb}{0.650980392156863,0.0235294117647059,0.156862745098039}

\begin{groupplot}[group style={group size=2 by 2, horizontal sep=2cm,vertical sep=3cm}]
\nextgroupplot[
width=5cm,
height=5cm,
axis background/.style={fill=white!95!black},
axis line style={white!73.7254901960784!black},
legend cell align={left},
legend columns=2,
legend style={fill opacity=0, draw opacity=1, text opacity=1, at={(0.5,-0.3)}, anchor=north, draw=white!80!black, fill=white!93.3333333333333!black},
log basis y={2},
tick pos=left,
title={\(\displaystyle \mathbb{E}_{\mu^y_\ell}[\mathrm{QoI}_\ell]\) vs. level },
x grid style={white!69.8039215686274!black},
xlabel={\(\displaystyle \ell\)},
xmajorgrids,
xmin=-0.3, xmax=6.3,
xtick style={color=white!33.3333333333333!black},
xtick={-1,0,1,2,3,4,5,6,7},
xtick={0,1,2,3,4,5,6,7},
y grid style={white!69.8039215686274!black},
ylabel={\(\displaystyle \mathbb{E}_{\mu^y_\ell}[\mathrm{QoI}_\ell]\)},
ymajorgrids,
ymin=0.0488006134672991, ymax=4.9365553039539,
ymode=log,
ytick style={color=white!33.3333333333333!black},
ytick={0.0078125,0.03125,0.125,0.5,2,8,32},
yticklabels={\(\displaystyle {2^{-6}}\),\(\displaystyle {2^{-5}}\),\(\displaystyle {2^{-4}}\),\(\displaystyle {2^{-3}}\),\(\displaystyle {2^{-2}}\),\(\displaystyle {2^{-1}}\),\(\displaystyle {2^{0}}\)}
]
\addplot [thick, color0]
table {%
0 3.99726262916213
1 2.02693069679442
2 1.19692280986385
3 0.903819193973004
4 0.791416741184772
5 0.746990681685072
6 0.728078833695481
};
\addlegendentry{Subsampled}
\addplot [thick, color0, dashed, forget plot]
table {%
0 3.99987781133807
1 2.05390186790914
2 1.25489685115167
3 0.972576642524485
4 0.86252190225686
5 0.817115261493545
6 0.796992100695823
};
\addplot [thick, color0, dashed, forget plot]
table {%
0 3.99464744698619
1 1.9999595256797
2 1.13894876857604
3 0.835061745421524
4 0.720311580112685
5 0.676866101876599
6 0.659165566695139
};
\addplot [thick, color1]
table {%
0 3.999525130044
1 1.99812495551977
2 1.00072773282464
3 0.499570039897998
4 0.250134311096506
5 0.12437487937849
6 0.0620472680472171
};
\addlegendentry{Gaussian}
\addplot [thick, color1, dashed, forget plot]
table {%
0 4.00209783406948
1 1.99945358924208
2 1.00236607373799
3 0.501178510710681
4 0.251716196628502
5 0.12628259696392
6 0.0638993741477718
};
\addplot [thick, color1, dashed, forget plot]
table {%
0 3.99695242601852
1 1.99679632179746
2 0.999089391911293
3 0.497961569085316
4 0.248552425564511
5 0.12246716179306
6 0.0601951619466624
};
\addplot [thick, black, mark=asterisk, mark size=3, mark options={solid}]
table {%
0 4
1 2
2 1
3 0.5
4 0.25
5 0.125
6 0.0625
};
\addlegendentry{True mean}

\nextgroupplot[
width=5cm,
height=5cm,
axis background/.style={fill=white!95!black},
axis line style={white!73.7254901960784!black},
legend cell align={left},
legend columns=2,
legend style={fill opacity=0, draw opacity=1, text opacity=1, at={(0.5,-0.3)}, anchor=north, draw=white!80!black, fill=white!93.3333333333333!black},
log basis y={2},
tick pos=left,
title={\(\displaystyle \mathbb{E}_{\nu_\ell}[Y_\ell]\) vs. level },
x grid style={white!69.8039215686274!black},
xlabel={\(\displaystyle \ell\)},
xmajorgrids,
xmin=-0.3, xmax=6.3,
xtick style={color=white!33.3333333333333!black},
xtick={-1,0,1,2,3,4,5,6,7},
xtick={0,1,2,3,4,5,6,7},
y grid style={white!69.8039215686274!black},
ylabel={\(\displaystyle \mathbb{E}_{\nu_\ell}[Y_\ell]\)},
ymajorgrids,
ymin=0.0176167140777568, ymax=5.18197666787274,
ymode=log,
ytick style={color=white!33.3333333333333!black},
ytick={0.00390625,0.015625,0.0625,0.25,1,4,16,64},
yticklabels={\(\displaystyle {2^{-8}}\),\(\displaystyle {2^{-6}}\),\(\displaystyle {2^{-4}}\),\(\displaystyle {2^{-2}}\),\(\displaystyle {2^{0}}\),\(\displaystyle {2^{2}}\),\(\displaystyle {2^{4}}\),\(\displaystyle {2^{6}}\)}
]
\addplot [thick, color0]
table {%
0 3.99726262916213
1 1.97040201278992
2 0.838850464841833
3 0.299827989468321
4 0.117285416147902
5 0.0511879493201642
6 0.0240448273737668
};
\addlegendentry{Subsampled}
\addplot [thick, color0, dashed, forget plot]
table {%
0 3.99987781133807
1 1.99704245635464
2 0.880439503744404
3 0.315869496889439
4 0.123333747966578
5 0.0537634163680341
6 0.0252792675205191
};
\addplot [thick, color0, dashed, forget plot]
table {%
0 3.99464744698619
1 1.94376156922521
2 0.797261425939262
3 0.283786482047202
4 0.111237084329226
5 0.0486124822722943
6 0.0228103872270145
};
\addplot [thick, color1]
table {%
0 3.999525130044
1 2.0043323660813
2 0.998167309726464
3 0.500280299318091
4 0.249649564349765
5 0.12493035535732
6 0.0624779418352572
};
\addlegendentry{Gaussian}
\addplot [thick, color1, dashed, forget plot]
table {%
0 4.00209783406948
1 2.0066421217347
2 1.00028277003769
3 0.501932460116828
4 0.250972863843833
5 0.125888004829804
6 0.0632625837814278
};
\addplot [thick, color1, dashed, forget plot]
table {%
0 3.99695242601852
1 2.0020226104279
2 0.996051849415238
3 0.498628138519354
4 0.248326264855697
5 0.123972705884836
6 0.0616932998890865
};
\addplot [thick, black, mark=asterisk, mark size=3, mark options={solid}]
table {%
0 4
1 2
2 1
3 0.5
4 0.25
5 0.125
6 0.0625
};
\addlegendentry{Slope -1.0}

\nextgroupplot[
width=5cm,
height=5cm,
axis background/.style={fill=white!95!black},
axis line style={white!73.7254901960784!black},
legend cell align={left},
legend columns=2,
legend style={fill opacity=0, draw opacity=1, text opacity=1, at={(0.5,-0.3)}, anchor=north, draw=white!80!black, fill=white!93.3333333333333!black},
log basis y={2},
tick pos=left,
title={\(\displaystyle \mathbb{V}_{\nu_\ell}[Y_\ell]\) vs. level },
x grid style={white!69.8039215686274!black},
xlabel={\(\displaystyle \ell\)},
xmajorgrids,
xmin=-0.3, xmax=6.3,
xtick style={color=white!33.3333333333333!black},
y grid style={white!69.8039215686274!black},
ylabel={\(\displaystyle \mathbb{V}_{\nu_\ell}[Y_\ell]\)},
ymajorgrids,
ytick={0.00390625,0.015625,0.0625,0.25,1,4,16,64},
ymin=0.0379876241056124, ymax=131.858884474269,
ymode=log,
ytick style={color=white!33.3333333333333!black}
]
\addplot [thick, color0]
table {%
	0 8.060919544837
	1 81.4329600836222
	2 18.4456302878288
	3 4.27681612349253
	4 1.01645946192774
	5 0.250205809664223
	6 0.0670999437332436
};
\addlegendentry{Subsampled}
\addplot [thick, color0, dashed, forget plot]
table {%
	0 8.22888704100087
	1 91.0288522807099
	2 20.4759738037451
	3 4.77080232254551
	4 1.15282347642689
	5 0.290473841243459
	6 0.0791733149799155
};
\addplot [thick, color0, dashed, forget plot]
table {%
	0 7.89295204867313
	1 71.8370678865345
	2 16.4152867719126
	3 3.78282992443955
	4 0.880095447428587
	5 0.209937778084988
	6 0.0550265724865717
};
\addplot [thick, color1]
table {%
	0 8.00099071124739
	1 8.22402763356956
	2 5.13385711090815
	3 3.40280618789659
	4 2.07681291231557
	5 1.17711778773915
	6 0.642121176593501
};
\addlegendentry{Gaussian}
\addplot [thick, color1, dashed, forget plot]
table {%
	0 8.16305336888074
	1 8.37708907655799
	2 5.23458078202489
	3 3.46574510073006
	4 2.11493482418455
	5 1.20073861677295
	6 0.659508473560959
};
\addplot [thick, color1, dashed, forget plot]
table {%
	0 7.83892805361403
	1 8.07096619058113
	2 5.03313343979141
	3 3.33986727506312
	4 2.03869100044658
	5 1.15349695870536
	6 0.624733879626043
};
\addplot [thick, black, mark=asterisk, mark size=3, mark options={solid}]
table {%
	0 10.9736603832761
	1 5.48683019163804
	2 2.74341509581902
	3 1.37170754790951
	4 0.685853773954755
	5 0.342926886977378
	6 0.171463443488689
};
\addlegendentry{slope -1.0}

\nextgroupplot[
width=5cm,
height=5cm,
axis background/.style={fill=white!95!black},
axis line style={white!73.7254901960784!black},
legend cell align={left},
legend columns=5,
legend style={fill opacity=0, draw opacity=1, text opacity=1, at={(0.5,-0.3)}, anchor=north, draw=white!80!black, fill=white!93.3333333333333!black},
log basis y={10},
tick pos=left,
title={Cost vs. level },
x grid style={white!69.8039215686274!black},
xlabel={\(\displaystyle \ell\)},
xmajorgrids,
xmin=-0.3, xmax=6.3,
xtick style={color=white!33.3333333333333!black},
y grid style={white!69.8039215686274!black},
ylabel={\(\displaystyle N_\ell\)},
ymajorgrids,
yminorgrids,
xminorgrids,
ymin=299.429033155489, ymax=155211.812297445,
ymode=log,
ytick style={color=white!33.3333333333333!black}
]
\addplot [thick, color0]
table {%
	0 56216.24
	1 105529.67
	2 33063.8
	3 10875.2
	4 3665.07
	5 1261.39
	6 455.71
};
\addlegendentry{Subsampled}
\addplot [thick, color0, dashed, forget plot]
table {%
	0 58932.7160832329
	1 116824.103774503
	2 36585.4157449298
	3 12066.887427426
	4 4090.96534918045
	5 1418.36933967014
	6 513.600360140041
};
\addplot [thick, color0, dashed, forget plot]
table {%
	0 53499.7639167671
	1 94235.236225497
	2 29542.1842550702
	3 9683.51257257403
	4 3239.17465081955
	5 1104.41066032987
	6 397.819639859959
};
\addplot [thick, color1]
table {%
	0 53669.29
	1 31418.69
	2 16249.75
	3 9041.61
	4 4913.34
	5 2594.15
	6 1349.16
};
\addlegendentry{Gaussian}
\addplot [thick, color1, dashed, forget plot]
table {%
	0 54285.9187658403
	1 31751.036384556
	2 16431.432304214
	3 9147.655690567
	4 4970.46736283374
	5 2624.34929564526
	6 1371.22005641828
};
\addplot [thick, color1, dashed, forget plot]
table {%
	0 53052.6612341597
	1 31086.343615444
	2 16068.067695786
	3 8935.564309433
	4 4856.21263716626
	5 2563.95070435474
	6 1327.09994358172
};

\end{groupplot}

\end{tikzpicture}
\caption{(Top left) Estimated  expected value of $\Q_\ell$ for both ML-MCMC algorithms and the true mean of $\Q_\ell$ for different values of $\ell$. (Top right) Expected value of $Y_\ell=\Q_\ell-\Q_{\ell-1}$ obtained with both algorithms for different values of $\ell$. (Bottom left): Variance of $Y_\ell$ obtained with both algorithms for different values of $\ell$. (Bottom right): Number of samples per level for each method with $\tol=0.07.$ On all plots, dashed lines represent a 95\% confidence interval estimated over 100 independent runs of each algorithm.  }
\label{fig:edq100b}	
\end{center}
\end{figure}

\begin{figure}[h]
	\begin{center}
\begin{tikzpicture}

\definecolor{color0}{rgb}{0.203921568627451,0.541176470588235,0.741176470588235}
\definecolor{color1}{rgb}{0.650980392156863,0.0235294117647059,0.156862745098039}

\begin{axis}[
width=8cm,
height=6cm,
axis background/.style={fill=white!95!black},
axis line style={white!73.7254901960784!black},
legend style={fill opacity=0, draw opacity=1, text opacity=1, at={(0.9,-0.3)}, draw=white!80!black, fill=white!93.3333333333333!black},
tick pos=left,
title={Sync rate vs. level},
x grid style={white!69.8039215686274!black},
xlabel={\(\displaystyle \ell\)},
xmajorgrids,
xmin=0.75, xmax=6.25,
xtick style={color=white!33.3333333333333!black},
y grid style={white!69.8039215686274!black},
ylabel={Sync rate },
ymajorgrids,
ymin=0.113440332156888, ymax=1.02209382859157,
ytick style={color=white!33.3333333333333!black}
]
\addplot [thick, color0]
table {%
1 0.156949
2 0.492164
3 0.784014
4 0.908954
5 0.95854
6 0.98
};
\addlegendentry{Subsampled}
\addplot [thick, color0, dashed, forget plot]
table {%
1 0.15915523618699
2 0.505630325232635
3 0.787563725605855
4 0.911208026792188
5 0.95997753032953
6 0.98079139693545
};
\addplot [thick, color0, dashed, forget plot]
table {%
1 0.15474276381301
2 0.478697674767365
3 0.780464274394145
4 0.906699973207811
5 0.95710246967047
6 0.97920860306455
};
\addplot [thick, color1]
table {%
1 0.223574
2 0.521021
3 0.731118
4 0.855322
5 0.92525
6 0.961363
};
\addlegendentry{Gaussian}
\addplot [thick, color1, dashed, forget plot]
table {%
1 0.223819279258446
2 0.521383235839839
3 0.731669789815868
4 0.855709793615543
5 0.925643850770927
6 0.961704067089729
};
\addplot [thick, color1, dashed, forget plot]
table {%
1 0.223328720741554
2 0.520658764160161
3 0.730566210184132
4 0.854934206384457
5 0.924856149229073
6 0.961021932910271
};
\end{axis}

\end{tikzpicture}
		\caption{Synchronization rate for both algorithms. Dashed lines represent a 95\% confidence interval.  As expected, the chains become more and more synchronized as the number of levels increases. }
		\label{fig:sync}	
	\end{center}
\end{figure}
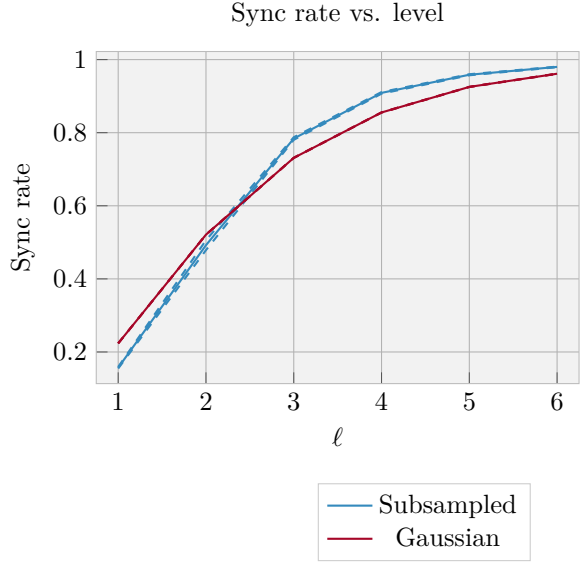

Lastly, we once again perform some robustness experiments for our C-ML-MCMC algorithm.  To that end, we run Algorithm \ref{algo:c_MLMH} using the same level independent proposals $Q_\ell=Q=\mathcal{N}(2,3)$ for three different prescribed tolerances $\tol=\{0.1,0.07,0.06\}$ for a total of 100 independent runs. Similar as in the previous example, for each independent run $k$ of the C-ML-MCMC algorithm, we compute  $\mathsf{er}_k^2$ as in \eqref{eq:erk} and plot it in Figure \ref{fig:reliabilitymovingerrortot}. Once again, we obtain estimators whose mean square error is close to the prescribed tolerance, as desired. This further evidences the robustness of Algorithm \ref{algo:c_MLMH}. 

\begin{figure}
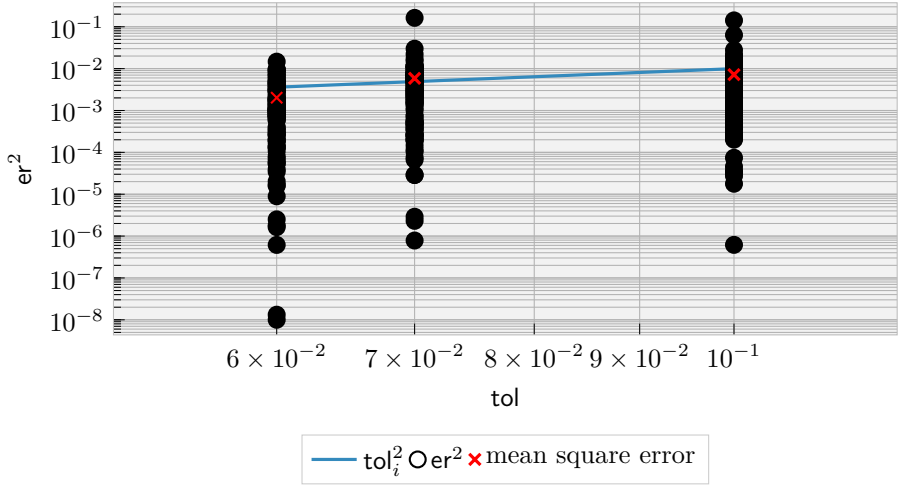

	\centering
	\include{figures/reliability_moving_error_tot}
	\caption{Total squared error $\mathsf{er}^2$ vs $\tol$ for the moving Gaussian example. }
	\label{fig:reliabilitymovingerrortot}
\end{figure}

\subsection{Subsurface flow} 
We consider a slightly more challenging problem for which we try to recover the probability distribution of the stochastic permeability field in Darcy's subsurface flow equation \eqref{eq:darcy}, based on some noise-polluted measured data. In particular, let $\bar{D}=[0,1]^2$, $\Te=\R^4,$ $(x_1,x_2)=:x\in 
\bar D$, $\partial D=\Gamma_N\cup \Gamma_D$, with $\mathring{\Gamma}_N\cap \mathring{\Gamma}_D=\emptyset, $ where $\Gamma_D:=\{(x_1,x_2)\in \partial D, \text{ s.t. } x_1=\{0,1\}\}$, and $\Gamma_N=\partial D\backslash\Gamma_D$. Darcy's subsurface  equation is given by \begin{align}\label{eq:darcy}
\begin{cases}
-\nabla_x \cdot (\kappa(x,\te)\nabla_x u(x,\te))=1,&x\in D,\  \te\in \Te,\\
u(x,\te)=0 &x\in\Gamma_D,\ \te\in \Te,\\
\partial_n u(x,\te)=0 &x\in\Gamma_N,\ \te\in\Te,\\
\end{cases}
\end{align}
where $u$ represents the pressure (or hydraulic head), and  we model the stochastic permeability $\kappa(x,\te)$  for $(\te_1,\te_2,\te_3,\te_4)=:\te\in\Te,$ as \begin{align}
\kappa(x,\te)=\exp\left(\te_1\cos(\pi x)+\frac{\te_2}{2}\sin(\pi x)+\frac{\te_3}{3}\cos(2\pi x)+ \frac{\te_4}{4}\sin(2\pi x)\right), \ 
\end{align}
with $\te_i\sim\mathcal{N}(0,1),\ i=1,2,3,4.$ Data $y$ is modeled by the solution of Equation \eqref{eq:darcy} observed at a grid of $9\times 9$ equally-spaced points in $D$ (hence $\mathsf{Y}=\R^{9\times 9}$) and polluted by a normally-distributed noise  $\eta\sim\mathcal{N}(0,\sigma_\mathrm{noise}^2I_{81\times81})$, with $\sigma_\mathrm{noise}=0.004$, which corresponds to approximately 1\% noise and $I_{81\times 81}$ is the 81-dimensional identity matrix. At each discretization level $\ell\geq 0$, the solution to Equation \eqref{eq:darcy} is numerically approximated using the finite element method on a triangular mesh of $2^\ell \cdot16 \times 2^\ell\cdot 16$ elements, which is computationally implemented using the FEniCS library \cite{Logg2012automated}. Such a library includes optimal solvers for the forward model, for which $\gamma$ can be reasonably taken equal to 1. Thus, the map $\te\mapsto \eff_\ell(\te)$ is to be understood as the numerical solution of Equation \eqref{eq:darcy} at a discretization level $\ell$, observed at a grid of $9\times9$ equally spaced points, for a particular value of $\te\in \X$. This, in turn, induces a level dependent potential \begin{align}
\potm:=\frac{1}{2\sigma^2_\mathrm{noise}}\lno y-\mathcal{F}_\ell (\te)\rno^2,
\end{align}
and prior $\mup=\mathcal{N}(0,I_{4\times 4})$. In the above expressions, $\lno \cdot \rno $ denotes the Frobenius norm on $\R^{9\times 9}$. Given that we are on a finite-dimensional setting, $\mup$ has a density with respect to the Lebesgue measure, and as such, we can define the un-normalized posterior density $\tilde{\pi}^y_\ell:\Te\mapsto\R_+$ w.r.t the Lebesgue measure  given by \begin{align}
\tilde{\pi}^y_\ell(\te)=\exp\left(-\potm -\frac{1}{2}\te^T\te\right).
\end{align}

As a quantity of interest we consider the average pressure over the physical domain, that is, $\Q(\te)=\int_D u(x,\te)\diff x$. We implement our ML-MCMC algorithm to approximate $\E_{\my}[\Q]$. In particular, we use RWM at level 0 with Gaussian proposals $\mathcal{N}(0,\sigma_\mathrm{rwm}^2 I_{4\times4})$ with step-size $\sigma_\mathrm{rwm}=0.05$, which produces an acceptance rate of  about 24\%. For the proposal $Q_\ell$ at higher levels $\ell\geq 1$, we use a mixture between the prior and a KDE obtained from the samples obtained at the previous level $\ell-1$.  This choice of mixture is made so that Assumption \ref{Ass:positivity} holds.

We begin by numerically verifying the converge rates stated in Theorem \ref{thm:cost_thm_schichl_1}. To that end, we  run Algorithm \ref{algo:MLMH} 20 independent times, obtaining 10,000 samples per run at each level $\ell=0,1,2,3$. We plot the obtained rates in Figure \ref{fig:rateselliptic}. As we can see, we numerically verify that $\alpha_w=\beta(=2.0)$, as predicted by our theory.

Lastly, we once again perform some robustness experiments for our C-ML-MCMC algorithm, with $\lev_\mathrm{max}=3$.  To that end, we first estimate  $\E_{\my}[\qoi]\approx{\mu}^y_4(\qoi_4)$ by performing 50 independent runs of a single-level MCMC algorithm at a discretization level $\ell=4$, obtaining 2000 samples on each simulation. In particular, each independent run implements a RWM sampler, using  proposals given by  $\mathcal{N}(0,\sigma_\mathrm{rwm}^2 I_{4\times4})$ with step-size $\sigma_\mathrm{rwm}=0.05$, which produces an acceptance rate of  about 21\%. We run Algorithm \ref{algo:c_MLMH} using the same mixture of independent proposals as before for different tolerance levels $\tol=\{1.1\times 10^{-4},2.0\times 10^{-4},3.0\times10^{-4}\}$. The C-ML-MCMC  algorithm is run 20 independent times for each tolerance $\tol_i$.  For each independent run $k=1,2,\dots,20$, let $\widehat{\qoi}_{\lev^{(k)}(\tol_i),\{N_\ell\}_{\ell=0}^{\lev^{(k)}},\tol_i}^{(k)},$ with  $\lev(\tol_i)\leq 3$, denote the ML estimator obtained from the $k^\mathrm{th}$ run at tolerance $\tol_i$. We compute the (approximate) total error squared $\tilde{\mathsf{er}}^2_{i,k}$ at the $k^\mathrm{th}$ run with a tolerance $\tol_i$ as \begin{align}
\tilde{\mathsf{er}}^2_{i,k}=\left(\widehat{\qoi}_{\lev^{(k)}(\tol_i),\{N_\ell\}_{\ell=0}^{\lev^{(k)}},\tol_i}^{(k)}-\tilde{\mu}^y_4(\qoi_4)\right)^2\label{eq:true_error},
\end{align} and plot it vs a given tolerance in Figure \ref{fig:reliabilityell}. As expected, the MSE of the obtained estimators is less than the prescribed tolerance.  This further evidences the robustness of Algorithm \ref{algo:c_MLMH}.

\begin{figure}
	\centering
\begin{tikzpicture}

\definecolor{color0}{rgb}{0.203921568627451,0.541176470588235,0.741176470588235}
\definecolor{color1}{rgb}{0.75,0,0.75}

\begin{axis}[
width=8cm,
height=6cm,
axis background/.style={fill=white!93.3333333333333!black},
axis line style={white!95!black},
legend cell align={left},
legend columns=2,
legend style={fill opacity=0, draw opacity=1, text opacity=1, at={(0.5,-0.4)}, anchor=north, draw=white!80!black},
log basis y={10},
tick pos=left,
x grid style={white!69.8039215686274!black},
xmajorgrids,
xmin=-0.15, xmax=3.15,
xlabel={\(\displaystyle \ell\)},
xtick style={color=black},
y grid style={white!69.8039215686274!black},
ymajorgrids,
ymin=1.07448669969657e-07, ymax=0.00279965667608173,
ymode=log,
ytick style={color=black}
]
\addplot [thick, color0]
table {%
1 0.000426078353619276
2 0.000116366074932034
3 2.64435230291633e-05
};
\addlegendentry{$\E_{\nul}[Y_\ell]$}
\addplot [thick, color0, dashed, forget plot]
table {%
1 0.000447058673852746
2 0.00012346083072775
3 2.96235003759226e-05
};
\addplot [thick, color0, dashed, forget plot]
table {%
1 0.000405098033385806
2 0.000109271319136319
3 2.3263545682404e-05
};
\addplot [thick, black, dash pattern=on 1pt off 3pt on 3pt off 3pt]
table {%
0 0.00176352605231291
1 0.000439336010362632
2 0.000109448981345191
3 2.72663274463034e-05
};
\addlegendentry{Slope -2.0}
\addplot [thick, red]
table {%
1 4.57968977946817e-06
2 8.03393963158888e-07
3 2.8319368919162e-07
};
\addlegendentry{$\V_{\nul}[Y_\ell]$}
\addplot [thick, red, dashed, forget plot]
table {%
1 5.69636547367441e-06
2 9.41696234938823e-07
3 3.95809015890597e-07
};
\addplot [thick, red, dashed, forget plot]
table {%
1 3.46301408526193e-06
2 6.65091691378953e-07
3 1.70578362492644e-07
};
\addplot [thick, color1, dash pattern=on 1pt off 3pt on 3pt off 3pt]
table {%
0 1.63946401279725e-05
1 4.07685854847251e-06
2 1.01379325770592e-06
3 2.52100178887751e-07
};
\addlegendentry{Slope -2.0}
\end{axis}

\end{tikzpicture}
	\caption{Decays of $\E_{\nul}[Y_\ell]$ and $\V_{\nul}[Y_\ell]$ vs level $\ell$.As we can see, both quantities decay with the same rate, as predicted by the theory.  }
	\label{fig:rateselliptic}
\end{figure}
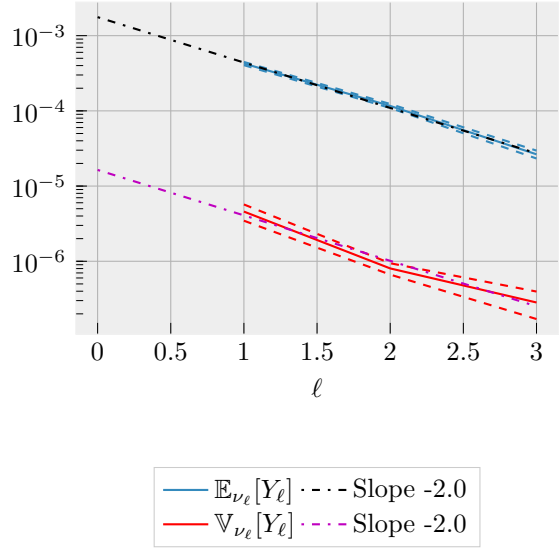

\begin{figure}
	\centering
\begin{tikzpicture}

\definecolor{color0}{rgb}{0.203921568627451,0.541176470588235,0.741176470588235}
\definecolor{color1}{rgb}{0.650980392156863,0.0235294117647059,0.156862745098039}

\begin{axis}[
width=12cm,
height=6cm,
legend cell align={left},
legend columns=6,
legend style={fill opacity=0, draw opacity=1, text opacity=1, at={(0.5,-0.3)}, anchor=north, draw=white!80!black, fill=white!93.3333333333333!black},
axis background/.style={fill=white!95!black},
axis line style={white!73.7254901960784!black},
tick pos=left,
x grid style={white!69.8039215686274!black},
xlabel={\(\displaystyle \mathsf{tol}\)},
xmajorgrids,
xmin=0.000098, xmax=0.0004051,
xminorgrids,
xmode=log,
xtick style={color=black},
xtick={0.0001,0.0002,0.0003},
xticklabels={\(\displaystyle 1 \times{10^{-4}}\),\(\displaystyle 2 \times{10^{-4}}\),\(\displaystyle 3 \times{10^{-4}}\)},
y grid style={white!69.8039215686274!black},
ylabel={\(\displaystyle \tilde{\mathsf{er}}^2\)},
ymajorgrids,
ymin=0.8e-11, ymax=1.1e-5,
yminorgrids,
ymode=log,
ytick style={color=black},
ytick={1e-12,1e-11,1e-10,1e-09,1e-08,1e-07,1e-06,1e-05},
]\addplot [very thick, color0]
table {%
	0.0003 9e-08
	0.0002 4e-08
	0.00011 1.21e-08
};
\addlegendentry{$\mathsf{tol}_i^2$}
\addplot [very thick, black, mark=*, mark size=3, mark options={solid}, only marks]
table {%
	0.0003 9.46922670067477e-09
	0.0003 1.05967989914282e-09
	0.0003 4.19725613352761e-08
	0.0003 1.18016153999048e-08
	0.0003 6.77394290278104e-08
	0.0003 2.84648183737865e-08
	0.0003 1.09906798224605e-10
	0.0003 8.79287442845093e-08
	0.0003 1.8862987614321e-07
	0.0003 9.31938032149984e-07
	0.0003 1.61530020370322e-07
	0.0003 2.22028491529928e-08
	0.0003 1.89190938999794e-08
	0.0003 1.86144872231239e-08
	0.0003 8.53499890493266e-09
	0.0003 1.705333913797e-07
	0.0003 3.75650527079845e-08
	0.0003 1.83381287093795e-08
};
\addlegendentry{$\tilde{\mathsf{er}}^2$}
\addplot [very thick, red, mark=x, mark size=3, mark options={solid}, only marks]
table {%
	0.0003 9.12741575299685e-08
};
\addlegendentry{mean square error}
\addplot [very thick, black, mark=*, mark size=3, mark options={solid}, only marks, forget plot]
table {%
	0.0002 1.75521211238324e-07
	0.0002 2.44200210116066e-09
	0.0002 1.41478249778103e-09
	0.0002 7.90644230423393e-10
	0.0002 2.54846403714752e-09
	0.0002 3.03914971823585e-08
	0.0002 3.2468403578352e-08
	0.0002 2.64946499479065e-08
	0.0002 1.42756134150276e-08
	0.0002 2.06909994147866e-07
	0.0002 1.36627954773557e-07
	0.0002 2.79270200442405e-09
};
\addplot [very thick, red, mark=x, mark size=3, mark options={solid}, only marks, forget plot]
table {%
	0.0002 3.16338959577164e-08
};
\addplot [very thick, black, mark=*, mark size=3, mark options={solid}, only marks, forget plot]
table {%
	0.00011 2.50222261609816e-10
	0.00011 1.81506221562821e-10
	0.00011 2.83994940034313e-10
	0.00011 1.35127822728221e-08
	0.00011 1.35127822728221e-08
	0.00011 3.1932303810578e-10
	0.00011 1.05972463417951e-09
	0.00011 2.83226276448502e-08
	0.00011 1.21071163749061e-09
	0.00011 6.53352424638937e-10
	0.00011 2.85513280217712e-11
	0.00011 2.85513280217712e-11
	0.00011 1.12232051716249e-09
	0.00011 7.81526337570117e-09
	0.00011 3.55299713143508e-08
	0.00011 5.09842306615693e-10
	0.00011 1.34289496276454e-09
	0.00011 7.69859115221088e-10
	0.00011 2.0127949708717e-09
	0.00011 3.70031977261368e-09
};
\addplot [very thick, red, mark=x, mark size=3, mark options={solid}, only marks, forget plot]
table {%
	0.00011 5.60836981697304e-09
};
\end{axis}

\end{tikzpicture}
	\caption{Computed squared error $\mathsf{er}^2$ (using Equation \ref{eq:true_error}) vs $\tol$ for the elliptic PDE example.}
	\label{fig:reliabilityell}
\end{figure}
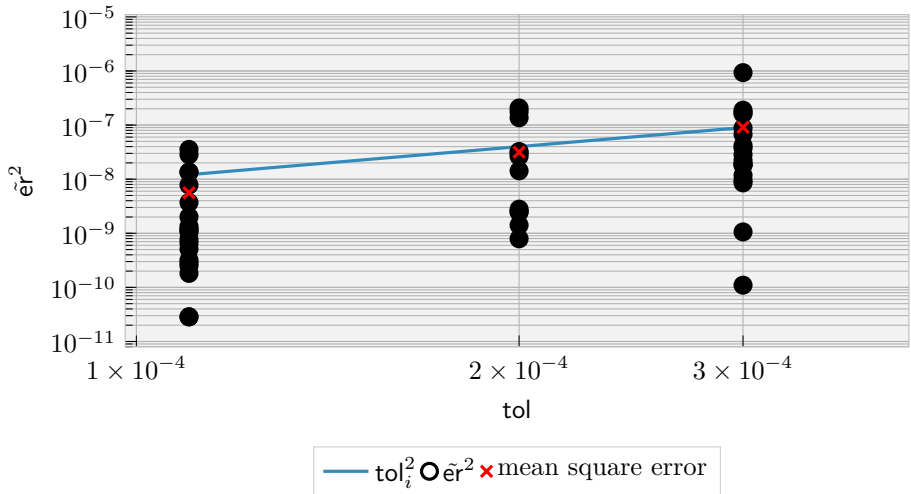

\section{Conclusions and future outlook}
\label{sec:conclussions}
In the current work we have presented a general ML-MCMC framework based on independent Metropolis-Hastings proposals, which can be understood as an extension  of currently existing methods (\cite{dodwell2015hierarchical}). Furthermore, we have provided what is, to the best of the authors knowledge, the first thorough study on the theory of ML-MCMC algorithms based on independent proposals; we have investigated necessary conditions for the existence and convergence to a unique invariant probability measure for the methods discussed herein, as well as presented sufficient conditions under which the cost-tolerance result presented in \cite{dodwell2015hierarchical} can be generalized to our method. Furthermore, we have verified our theoretical results on three academic examples. Lastly, we have presented a non-asymptotic  bound on the statistical MSE in the spirit of \cite{rudolf2011explicit} under minimal assumptions on the Markov transition operator. We remark that such a bound can be applied outside the context of ML-MCMC.

We intend to carry out a number of future extensions of the work presented herein, both from a theoretical and computational perspective. On the one hand, from a  theoretical perspective, there are still some unanswered questions regarding the class of methods at hand.  In particular, it is not clear for us if the multi-level Markov transition operator $\Pisl$ satisfies Assumptions \ref{R1} and \ref{R2} in Theorem \ref{thm:rudolf}, which would in turn produce a sharper bound on the statistical component of the MSE.  In addition, given the \textit{unattainable} nature of the invariant joint probability measure of the ML-MCMC algorithm, it is also unclear at this point how such a measure depends on the choice of proposal. 

 On the other hand, from a computational perspective,  one future research direction  stems from the construction of the level $\ell$ proposal $Q_\ell$. Intuitively, such a proposal should \textit{resemble} the posterior $\muy_{\ell-1}$ or $\muy_\ell$ (or a convex combination of both) as much as possible, while at the same time satisfying Assumption \ref{Ass:essinf}. Some preliminary experiments suggest that density approximation techniques, such as kernel density approximation (KDE) \cite{silverman1986density} or, more recently, flow-based generative models \cite{papamakarios2019normalizing}, can be used to construct efficient ML-MCMC proposals. Finally, we would also be interested in extending these results to a multi-index setting as in  \cite{haji2016multi}. 

\section*{Acknowledgments}
The authors would like to thank Dr. Sebastian Krumscheid, Dr. Panagiotis Tsilifis and Dr. Anamika Pandey for fruitful discussions during the very early stages of this work. 
\appendix
\section{Bounding the statistical MSE}\label{sec:mse_bound}
 We now present a non-asymptotic bound of the mean square error of the ergodic estimator $\hat{f}_{N,n_b}=\frac{1}{N}\sum_{i=1}^N f(\te^{n+n_b})$ obtained with a non-reversible Markov chain $\{\te^n\}$. We remark that the bound presented  in this work has  utility beyond the multi-level construction.  Throughout this section we will let $(\mathsf{X},\lno \cdot \rno_{\mathsf{X}})$ be a separable Banach space with associated Borel $\sigma$-algebra $\mathcal{B}(\mathsf{X})$. We will also let $P:L_2(\mathsf{X},\nu)\mapsto L_2(\mathsf{X},\nu)$ be a $\nu$-invariant Markov operator for some non-trivial probability measure $\nu$ on $(\mathsf{X},\mathcal{B}(\mathsf{X})).$ Lastly,  we define the \emph{averaging operator} as $\nuop f:=\nu(f)=\int_{\mathsf{X}}f(\te)\nu(\diff\te).$  

\begin{theorem}[Non-asymptotic bound on the mean square error]\label{thm:namsegen} 
	 Let $f\in  L_2(\mathsf{X},\nu)$, be a $\nu$-square integrable function and write $g(\te)=f(\te)-\int_{\mathsf{X}}f(\te)\nu(\diff \te)$. Suppose the Markov operator $P$ is uniformly ergodic and non-necessarily reversible. In addition, assume the chain generated by $P$ starts from an initial probability measure $\nu^0\ll\nu$, with $\frac{\diff \nu^0}{\diff \nu}\in L_\infty(\mathsf{X},\nu)$.  Then,
	\begin{align}\label{eq:mse_bound}
		\mathrm{MSE}(\hat{f}_{N,n_{b }};\nu^0)=\mathsf{E}_{\nu^0, P}\left|\frac{1}{N}\sum_{n=1}^{N}g(\te^{n+n_{b }})\right|^2\leq\frac{\V_{\nu}[f]}{N}(C_{\mathrm{inv} }+ C_{\mathrm{ns} }),
	\end{align}	
	where $C_{\mathrm{inv} }=\left(1+\frac{4}{\gamma_\mathrm{ps}[P]}\right),$ $C_{\mathrm{ns} }=\left(2\lno\frac{\diff \nu^0}{\diff \nu}-1 \rno_{L_\infty}\left(1+\frac{4}{\gamma_{\mathrm{ps}}[P]}\right)\right)$, where $\gamma_{\mathrm{ps}}[ P]$  is the pseudo-spectral gap of $P,$ defined in \eqref{eq:psg}. 
\end{theorem}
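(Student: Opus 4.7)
My plan is to decompose the MSE into a stationary contribution and a correction for the mismatch between $\nu^0$ and $\nu$, and handle the two pieces separately. The non-stationary correction is harvested from the Markov identity
\begin{align*}
\mathsf{E}_{\nu^0,P}[\Phi(\te^0,\te^1,\dots)] \;=\; \mathsf{E}_{\nu,P}\!\left[\Phi(\te^0,\te^1,\dots)\,\tfrac{\diff\nu^0}{\diff\nu}(\te^0)\right],
\end{align*}
which follows from conditioning on $\te^0$ and applying the Markov property. Specializing to the non-negative functional $\Phi = (N^{-1}\sum_{n=1}^N g(\te^{n+n_b}))^2$ and using $|h|\leq\lno h\rno_{L_\infty}$ for $h:=\tfrac{\diff\nu^0}{\diff\nu}-1$, this yields
\begin{align*}
\mathrm{MSE}(\hat f_{N,n_b};\nu^0) \;\leq\; \bigl(1+\lno h\rno_{L_\infty}\bigr)\,\mathrm{MSE}(\hat f_{N,n_b};\nu),
\end{align*}
so the rest of the work reduces to controlling the stationary MSE. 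The looser split giving the extra factor $2$ in $C_{\mathrm{ns}}$ can be obtained by a Minkowski-type argument that does not require collapsing the correction multiplicatively.

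Next I would expand the stationary MSE as a double sum of covariances which, by stationarity, reads $\frac{1}{N^2}\sum_{i,j=1}^{N}\langle g,\, P^{|i-j|}g\rangle_\nu$, and apply Cauchy--Schwarz in $L_2(\mathsf{X},\nu)$ to reduce the problem to bounding $\lno P^k g\rno_{L_2}$ for $g\in L_2^0(\mathsf{X},\nu)$. Since $P$ is not reversible, the spectral theorem cannot be invoked directly, so I would pass through the multiplicative reversibilization. Letting $k^{\star}\in\mathbb{N}$ attain the maximum in \eqref{eq:psg}, the operator $(P^*)^{k^{\star}}P^{k^{\star}}$ is self-adjoint on $L_2^0$ with $\lno (P^*)^{k^{\star}}P^{k^{\star}}\rno_{L_2^0\to L_2^0}\leq 1-k^{\star}\gamma_\mathrm{ps}[P]$; hence
\begin{align*}
\lno P^{k^{\star}}g\rno_{L_2}^{2}\;=\;\langle (P^*)^{k^{\star}}P^{k^{\star}}g,\,g\rangle_\nu\;\leq\;(1-k^{\star}\gamma_\mathrm{ps}[P])\,\lno g\rno_{L_2}^{2}.
\end{align*}
Iterating this inequality and using the weak contractivity $\lno P\rno_{L_2^0\to L_2^0}\leq 1$ to absorb remainders smaller than $k^{\star}$ gives $\lno P^n g\rno_{L_2}\leq (1-k^{\star}\gamma_\mathrm{ps}[P])^{\lfloor n/k^{\star}\rfloor/2}\lno g\rno_{L_2}$. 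Summing the resulting geometric series via the elementary inequality $1-\sqrt{1-x}\geq x/2$ yields $\sum_{n\geq 0}\lno P^n g\rno_{L_2}\leq (2/\gamma_\mathrm{ps}[P])\lno g\rno_{L_2}$, and substituting this into the covariance double sum produces the stationary bound $\mathrm{MSE}(\hat f_{N,n_b};\nu)\leq (1+4/\gamma_\mathrm{ps}[P])\V_\nu[f]/N$.

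Combining the two steps yields the claimed inequality with constants $C_{\mathrm{inv}}=1+4/\gamma_\mathrm{ps}[P]$ and $C_{\mathrm{ns}}=2\lno h\rno_{L_\infty}(1+4/\gamma_\mathrm{ps}[P])$. The main obstacle will be the non-reversible spectral step: without self-adjointness of $P$, there is no clean decay $\lno P^n g\rno_{L_2}\leq (1-\gamma)^n\lno g\rno_{L_2}$, and the reversibilization iteration must be arranged so that the factor $k^{\star}$ appearing in the exponent cancels against the $k^{\star}$ in the geometric sum, leaving a final constant governed by $\gamma_\mathrm{ps}[P]$ alone and not by $k^{\star}$ (which could otherwise be arbitrarily large). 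Once that cancellation is handled cleanly, the remaining estimates are routine bookkeeping.
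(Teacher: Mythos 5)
Your proposal is correct, and its stationary half coincides with the paper's argument: the covariance expansion, Cauchy--Schwarz, the passage through the multiplicative reversibilization $(P^*)^{k^\star}P^{k^\star}$, the floor-function iteration, and the geometric sum in which $k^\star$ cancels against $\gamma_2[(P^*)^{k^\star}P^{k^\star}]=k^\star\gamma_\mathrm{ps}[P]$ are all exactly the steps of the paper's Lemma on the MSE at stationarity (Lemma \ref{lemma:mse_stationarity}), yielding the same constant $1+4/\gamma_\mathrm{ps}[P]$. Where you genuinely diverge is the non-stationary start. The paper invokes an exact additive decomposition borrowed from Rudolf (Lemma \ref{lemma:aux_mse}), writing the MSE from $\nu^0$ as the stationary MSE plus correction terms of the form $\langle (P^{j+n_b}-\widehat{\nu})(\cdot),\,\tfrac{\diff\nu^0}{\diff\nu}-1\rangle_{\nu}$, and then bounds each correction via H\"older, the weak $L_1$-contraction (the source of the factor $2$ in $C_{\mathrm{ns}}$), and a second pseudo-spectral-gap pass. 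You instead apply the path-space change of measure $\mathsf{E}_{\nu^0,P}[\Phi]=\mathsf{E}_{\nu,P}\bigl[\Phi\,\tfrac{\diff\nu^0}{\diff\nu}(\te^0)\bigr]$ to the non-negative functional $\Phi=\bigl(N^{-1}\sum_n g(\te^{n+n_b})\bigr)^2$, collapsing the whole correction into the multiplicative factor $1+\lno\tfrac{\diff\nu^0}{\diff\nu}-1\rno_{L_\infty}$. Since $\bigl(1+\lno\tfrac{\diff\nu^0}{\diff\nu}-1\rno_{L_\infty}\bigr)\bigl(1+\tfrac{4}{\gamma_\mathrm{ps}[P]}\bigr)\leq C_{\mathrm{inv}}+C_{\mathrm{ns}}$, your bound implies, and in fact slightly sharpens, the stated one, so the ``Minkowski-type argument'' you allude to for recovering the factor $2$ is not needed. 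The trade-off is that your multiplicative shortcut discards the structure the additive decomposition preserves: in the sharper variant (Theorem \ref{thm:rudolf}) the non-stationary contribution decays geometrically in the burn-in $n_b$, whereas your bound cannot see $n_b$ at all. One minor loose end, shared with the paper, is the tacit assumption that the maximum in the definition of $\gamma_\mathrm{ps}[P]$ is attained at some finite $k^\star$.
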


The proof of Theorem \ref{thm:namsegen} is decomposed into a series of auxiliary results. 
We now present a first bound of the form \eqref{eq:mse_bound} for chains which are started at stationarity, i.e., whenever $\nu^0=\nu$. Although this is usually not the case, the following Lemma will be useful in the proof of Theorem \ref{thm:namsegen}. 
\begin{lemma}[MSE bound starting at stationarity]\label{lemma:mse_stationarity}
Under the same assumptions as in Theorem \ref{thm:namsegen} and with $\nu^0=\nu,$  it holds
\begin{align}
	\mathrm{MSE}(\hat{f}_{N,n_{b }=0};\nu):=\mathsf{E}_{\nu, P}\left|\frac{1}{N}\sum_{n=1}^{N}g(\te^{n+n_{b }})\right|^2\leq \frac{\V_{\nu}[f]}{N}\left(1+\frac{4}{\gamma_\mathrm{ps}[P]}\right).
\end{align}

\begin{proof} We follow a similar approach to those presented in \cite[Theorem 3.2]{paulin2015concentration} and \cite[Section 3]{rudolf2009explicit}. In order to ease notation, for the remainder of this proof we write $ L_q= L_q(\mathsf{X},\nu), \ q\in[1,\infty]$. We can write the MSE of a Markov chain generated by $P$ starting at $\nu$ as 
\begin{align}
\mathsf{E}_{\nu, P}\left|\frac{1}{N}\sum_{n=1}^{N}g(\te^n)\right|^2=\frac{1}{N^2}\sum_{n=1}^{N}\mathsf{E}_{\nu, P}[g(\te^n)^2]+\frac{2}{N^2}\sum_{j=1}^{N-1}\sum_{i=j+1}^{N}\mathsf{E}_{\nu, P}[g(\te^i)g(\te^j)]. \label{eq:mse_g}
\end{align}
Working on the expectation of the second term on the right hand side we get from the Cauchy-Schwarz inequality  that
\begin{align}
\mathsf{E}_{\nu, P}[g(\te^i)g(\te^j)]&=\langle g,P^{i-j}g\rangle_{\nu}=\langle g,(  P -\nuop)^{i-j}g\rangle_{\nu}\\
&\leq \lno g\rno_{ L_2}^2 \lno (  P -\nuop)^{i-j}\rno_{ L_2\mapsto  L_2}.
\end{align}	
Notice that for any $k\geq 1$,  we have 
\begin{align}
\lno (  P -\nuop)^{i-j}\rno_{ L_2\mapsto  L_2}&\leq\lno (  P -\nuop)^k\rno_{ L_2\mapsto  L_2}^{\lfloor\frac{i-j}{k}\rfloor}\\
&=\lno(P^*-\nuop)^k (  P -\nuop)^k\rno_{ L_2\mapsto  L_2}^{\frac{1}{2}\lfloor\frac{i-j}{k}\rfloor} \label{eq:bound_P}
\end{align}
where $\lfloor\cdot\rfloor$ is the floor function. Now, let $k_\mathrm{ps}$ be the smallest integer such that 
\begin{align}\label{eq:k_ps}
k_\mathrm{ps}\gamma_\mathrm{ps}[P]=\gamma[( P^*)^{k_\mathrm{ps}}  P^{k_\mathrm{ps}}]=1-\lno(P^*-\nuop)^{k_\mathrm{ps}} (  P -\nuop)^{k_\mathrm{ps}}\rno_{ L_2\mapsto  L_2},
\end{align}
which, is strictly positive for uniformly ergodic chains (see \cite[Section 3.3]{roberts2004general}).  Then, from \eqref{eq:mse_g}, \eqref{eq:bound_P}, and \eqref{eq:k_ps}, we obtain: 
\begin{align}
&\frac{2}{N^2}\sum_{j=1}^{N-1}\sum_{i=j+1}^{N}\mathsf{E}_{\nu, P}[g(\te^i)g(\te^j)]\leq \frac{ 2 }{N^2}\sum_{j=1}^{N-1}\sum_{i=j+1}^{N}\lno g\rno_{ L_2}^2 \left(1-k_\mathrm{ps}\gamma_\mathrm{ps}[P]\right)^{\frac{1}{2}\lfloor\frac{i-j}{k_\mathrm{ps}}\rfloor}.
\end{align}
For notational simplicity we write $\varrho=(1-k_\mathrm{ps}\gamma_\mathrm{ps}[P])$. We then have that 
\begin{align}
\frac{ 2 }{N^2}\sum_{j=1}^{N-1}\sum_{i=j+1}^{N}\lno g\rno_{ L_2}^2 \varrho^{\frac{1}{2}\lfloor\frac{i-j}{k_\mathrm{ps}}\rfloor}&
\leq \frac{2\lno g\rno^2_{L_2}}{N}\sum_{m=0}^{\infty}\varrho^{\frac{1}{2}\lfloor \frac{m}{k_\mathrm{ps}}\rfloor}\leq \frac{2\lno g\rno^2_{L_2}k_\mathrm{ps}}{N}\sum_{m=0}^{\infty}\varrho^{\frac{1}{2}m}\\
&=\frac{2\lno g\rno^2_{L_2}k_\mathrm{ps}}{N}\frac{1}{1-\varrho^{1/2}}=\frac{2\lno g\rno^2_{L_2}k_\mathrm{ps}}{N} \frac{1+\varrho^{\frac{1}{2}}}{1-\varrho}\\
&\leq \frac{4\lno g\rno^2_{L_2}}{N\gamma_\mathrm{ps}[P]},
\end{align}
where the second inequality comes from the definition of the  floor function $\lfloor\cdot\rfloor$.
%
%
%

We now shift our attention to the first term in \eqref{eq:mse_g}. Using H\"older's inequality with $q=\infty,\ q'=1$, together with the fact that $P$ is a weak contraction in $L_q(\mathsf{X},\nu)$, for any $q\in[1,\infty]$, we obtain 
\begin{align}
\frac{1}{N^2}\sum_{n=1}^{N}\mathsf{E}_{\nu, P}[g(\te^n)^2]=\frac{1}{N^2}\sum_{n=1}^{N}\langle 1,  P ^n g^2\rangle_{\nu}\leq\frac{1}{N^2}\sum_{n=1}^{N}\lno  P ^n g^2 \rno_{L_1}\leq \frac{ \lno g^2\rno_{ L_1}}{N}.
\end{align}
Lastly, notice that $$\lno g^2\rno_{ L_1}=\int_{\mathsf{X}}|g^2(\te)|\nu(\diff \te)=\int_{\mathsf{X}}g^2(\te)\nu(\diff \te)=\lno g\rno^2_{L_2}.$$ Hence, we obtain the bound \begin{align}
\frac{1}{N^2}\sum_{n=1}^{N}\mathsf{E}_{\nu, P}[g(\te^n)^2]\leq \frac{ \lno g\rno_{ L_2}^2}{N}. \label{eq:bound2}
\end{align}  Thus, from \eqref{eq:mse_g} and \eqref{eq:bound2}, together with the observation that $\lno g\rno_{L_2}^2=\int_{\mathsf{X}}(f(\te)-\nuop(f))^2\nu(\diff \te)=\V_{\nu}[f]$, we finally obtain,
\begin{align}
\mathsf{E}_{\nu, P}\left|\frac{1}{N}\sum_{n=1}^{N}g(\te^n)\right|^2&\leq \frac{\lno g\rno_{ L_2}^2}{N} \left(1+\frac{4}{\gamma_\mathrm{ps}[P]}\right)\\
&=\frac{\V_{\nu}[f]}{N}\left(1+\frac{4}{\gamma_\mathrm{ps}[P]}\right).
\end{align}
\end{proof}
\end{lemma}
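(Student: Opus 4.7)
The plan is to bound the MSE by splitting the square of the sum into diagonal (variance) and off-diagonal (covariance) contributions, then handle each separately. Since the chain is started at $\nu$ and $P$ is $\nu$-invariant, every marginal is $\nu$; combined with $\nu(g)=0$, the MSE expands into
\begin{align}
\mathsf{E}_{\nu,P}\Bigl|\frac{1}{N}\sum_{n=1}^{N}g(\te^{n})\Bigr|^{2}
= \frac{1}{N^{2}}\sum_{n=1}^{N}\mathsf{E}_{\nu,P}[g(\te^{n})^{2}]
+\frac{2}{N^{2}}\sum_{j=1}^{N-1}\sum_{i=j+1}^{N}\mathsf{E}_{\nu,P}[g(\te^{i})g(\te^{j})].
\end{align}

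For the diagonal, stationarity and $g\in L_2(\mathsf{X},\nu)$ immediately give $\mathsf{E}_{\nu,P}[g(\te^{n})^{2}]=\|g\|_{L_2}^{2}=\V_{\nu}[f]$, so the first sum is bounded by $\V_{\nu}[f]/N$. This step is essentially a free calculation, but I would also write it via the contraction argument (using H\"older with $q=\infty, q'=1$ and the fact that any Markov operator is a weak $L_1$-contraction) so that the same template survives in case the stationarity hypothesis is ever relaxed.

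The main work is on the off-diagonal term. First I would rewrite each covariance using invariance of $g$ under averaging: $\mathsf{E}_{\nu,P}[g(\te^{i})g(\te^{j})]=\langle g,P^{i-j}g\rangle_{\nu}=\langle g,(P-\nuop)^{i-j}g\rangle_{\nu}$, where the replacement $P\mapsto P-\nuop$ is legitimate because $g\in L^{0}_{2}(\mathsf{X},\nu)$ and $\nuop g=0$. Cauchy--Schwarz then yields $\langle g,(P-\nuop)^{i-j}g\rangle_{\nu}\le \|g\|_{L_2}^{2}\,\|(P-\nuop)^{i-j}\|_{L_2\to L_2}$. To connect this to the pseudo-spectral gap, I would submultiplicatively split the iterate by an arbitrary block size $k\ge 1$ via $\|(P-\nuop)^{i-j}\|_{L_2\to L_2}\le \|(P-\nuop)^{k}\|_{L_2\to L_2}^{\lfloor(i-j)/k\rfloor}$, and then invoke the operator identity $\|A\|_{L_2\to L_2}^{2}=\|A^{*}A\|_{L_2\to L_2}$ with $A=(P-\nuop)^{k}$, together with $(P-\nuop)^{*}=P^{*}-\nuop$, to obtain
\begin{align}
\|(P-\nuop)^{i-j}\|_{L_2\to L_2} \le \bigl\|(P^{*}-\nuop)^{k}(P-\nuop)^{k}\bigr\|_{L_2\to L_2}^{\tfrac{1}{2}\lfloor(i-j)/k\rfloor}.
\end{align}

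With the choice $k=k_{\mathrm{ps}}$ attaining the pseudo-spectral gap, the bracketed norm equals $1-k_{\mathrm{ps}}\gamma_{\mathrm{ps}}[P]$. Setting $\varrho:=1-k_{\mathrm{ps}}\gamma_{\mathrm{ps}}[P]\in[0,1)$, I would next count pairs $(i,j)$ by their lag $m=i-j$ and bound $\sum_{m=0}^{\infty}\varrho^{\lfloor m/k_{\mathrm{ps}}\rfloor/2}\le k_{\mathrm{ps}}\sum_{m=0}^{\infty}\varrho^{m/2}=k_{\mathrm{ps}}/(1-\varrho^{1/2})$. Rationalising using $1/(1-\varrho^{1/2})=(1+\varrho^{1/2})/(1-\varrho)\le 2/(k_{\mathrm{ps}}\gamma_{\mathrm{ps}}[P])$ produces exactly the $4/\gamma_{\mathrm{ps}}[P]$ factor, after dividing by $N^{2}$ and noting that the double sum contains at most $N$ copies of each lag. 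Adding this to the diagonal bound and recognising $\|g\|_{L_2}^{2}=\V_{\nu}[f]$ gives the claim.

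The main obstacle is the non-reversibility: one cannot directly exploit an $L_2$-spectral gap of $P$ itself. The pseudo-spectral gap trick (passing to the symmetric operator $(P^{*}-\nuop)^{k}(P-\nuop)^{k}$ via $\|A\|^{2}=\|A^{*}A\|$) is what circumvents this, and the delicate bookkeeping is the block decomposition at the scale $k_{\mathrm{ps}}$ so that the geometric decay survives the floor function. Once these two ingredients are in place, the rest is a routine geometric sum.
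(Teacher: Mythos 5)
Your proposal is correct and follows essentially the same route as the paper: the same diagonal/off-diagonal split, the same Cauchy--Schwarz plus $\lVert A\rVert^2=\lVert A^*A\rVert$ reduction to the pseudo-spectral gap at block size $k_{\mathrm{ps}}$, and the same geometric-sum bookkeeping yielding the $4/\gamma_{\mathrm{ps}}[P]$ factor. The only (harmless) divergence is that you evaluate the diagonal term directly from stationarity, whereas the paper routes it through H\"older and the weak $L_1$-contraction; both give $\lVert g\rVert_{L_2}^2/N$.
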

The previous result should be compared to \cite[Theorem 3.2]{paulin2015concentration} and \cite[Theorem 5]{rudolf2009explicit}. We now consider the more general case where the chain is not started from stationarity, i.e., when $\te^0\sim\nu^0$, where $\nu^0\ll\nu$ is a probability measure on $(\mathsf{X},\mathcal{B}(\mathsf{X}))$. We recall the following result from \cite{rudolf2011explicit}. 
\begin{lemma}\label{lemma:aux_mse}
Denote by $n_{b }\in\mathbb{N}$ the burn-in period and let $\{ \te^n\}_{n\in\mathbb{N}}$ be a Markov chain generated by $P$ starting from an initial measure $\nu^0$ and invariant probability measure $ \nu$, with $\nu^0\ll\nu$. Under the same assumptions as in Theorem \ref{thm:namsegen}, it holds that:
\begin{align}
\mathsf{E}_{\nu^0, P}\left|\frac{1}{N}\sum_{n=1}^{N}g(\te^{n+n_{b }})\right|^2&=\mathsf{E}_{\nu, P}\left|\frac{1}{N}\sum_{n=1}^{N}g(\te^n)\right|^2+\frac{1}{N^2}\sum_{j=1}^{N}\mathcal{H}^{j+{n_b}}(g^2)\\&+\frac{2}{N^2}\sum_{j=1}^{N-1}\sum_{k=j+1}^{N}\mathcal{H}^{j+{n_b}}(gP^{k-j} g), \label{eq:mse_ns}
\end{align} 
where \begin{align}
\mathcal{H}^{i}(h)=\left \langle ( P^i-\nuop)h,\left(\frac{\diff \nu^0}{\diff \nu}-1\right)\right \rangle_{\nu}, \quad i\in \mathbb{N}, \ h\in L_2(\mathsf{X},\nu),  \label{eq:hcal}
\end{align}
	\begin{proof}
		See \cite[Proposition 3.29]{rudolf2011explicit}.
		\end{proof}
\end{lemma}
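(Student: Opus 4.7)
The plan is a direct expansion-and-bookkeeping argument. I would start by expanding the square inside the expectation on the left-hand side:
\[
\mathsf{E}_{\nu^0, P}\left|\frac{1}{N}\sum_{n=1}^{N}g(\te^{n+n_{b}})\right|^2
=\frac{1}{N^2}\sum_{j=1}^{N}\mathsf{E}_{\nu^0,P}[g(\te^{j+n_b})^2]
+\frac{2}{N^2}\sum_{j=1}^{N-1}\sum_{k=j+1}^{N}\mathsf{E}_{\nu^0,P}[g(\te^{j+n_b})g(\te^{k+n_b})],
\]
and then handle diagonal and off-diagonal contributions separately, showing that each one equals its stationary counterpart plus one of the $\mathcal H$-corrections.

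For the diagonal term I would note that $\te^{j+n_b}$ has law $\nu^0 P^{j+n_b}$, so by duality and $\nu^0\ll\nu$ with density $\rho:=\diff\nu^0/\diff\nu\in L_\infty$,
\[
\mathsf{E}_{\nu^0,P}[g(\te^{j+n_b})^2]=\int_{\mathsf X}(P^{j+n_b}g^2)(\te)\,\nu^0(\diff\te)=\langle P^{j+n_b}g^2,\rho\rangle_{\nu}.
\]
Writing $\rho=1+(\rho-1)$ and using the invariance $\nu P=\nu$ gives $\langle P^{j+n_b}g^2,1\rangle_\nu=\nu(g^2)=\mathsf{E}_{\nu,P}[g(\te^j)^2]$. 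Since $\widehat{\nu}g^2=\nu(g^2)$ is constant and $\langle c,\rho-1\rangle_\nu=c(\nu^0(\X)-\nu(\X))=0$, the remainder is exactly $\langle (P^{j+n_b}-\widehat\nu)g^2,\rho-1\rangle_\nu=\mathcal H^{j+n_b}(g^2)$. Summing in $j$ produces the first correction.

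For the cross-terms, conditioning on $\te^{j+n_b}$ and applying the Markov property yields
\[
\mathsf{E}_{\nu^0,P}[g(\te^{j+n_b})g(\te^{k+n_b})]=\mathsf{E}_{\nu^0,P}\bigl[g(\te^{j+n_b})\,(P^{k-j}g)(\te^{j+n_b})\bigr].
\]
Setting $h:=g\cdot P^{k-j}g\in L_1(\mathsf X,\nu)$ and repeating the above device gives $\langle P^{j+n_b}h,\rho\rangle_\nu=\nu(h)+\mathcal H^{j+n_b}(h)$, where $\nu(h)=\nu(g\,P^{k-j}g)=\langle g,P^{k-j}g\rangle_\nu=\mathsf{E}_{\nu,P}[g(\te^j)g(\te^k)]$ by stationarity. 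Summing over $j<k$ and combining with the diagonal identity reproduces the decomposition stated in the lemma, since the two stationary pieces recombine into $\mathsf{E}_{\nu,P}|N^{-1}\sum_n g(\te^n)|^2$.

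I do not expect a real obstacle here: the argument is purely algebraic once one recognises that the $\mathcal H$-correction is obtained by subtracting the projection onto constants, $\widehat\nu$, and using $\int(\rho-1)\,\diff\nu=0$ to cancel it for free. The only point that needs a little care is integrability — one must check that $h=g\,P^{k-j}g\in L_1(\mathsf X,\nu)$ (which follows from Cauchy–Schwarz together with $g\in L_2(\mathsf X,\nu)$ and the weak contractivity of $P$ on $L_2$) and that $\langle P^{j+n_b}h,\rho-1\rangle_\nu$ is well defined (which follows from $\rho\in L_\infty$). These are all included in the standing hypotheses of Theorem~\ref{thm:namsegen}, so the identity follows term by term.
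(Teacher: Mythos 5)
Your proof is correct. Note that the paper does not actually prove this lemma: it is dispatched with a one-line citation to Proposition 3.29 of Rudolf's work, so you have supplied the self-contained argument that the paper omits. Your route is the standard one behind that proposition, and every step checks out: expanding the square, using the Markov property to write the cross-term as $\mathsf{E}_{\nu^0,P}[g(\te^{j+n_b})\,(P^{k-j}g)(\te^{j+n_b})]$, and then splitting $\rho=1+(\rho-1)$ so that the $\nu$-invariant part reproduces the stationary expectation while the centring $\langle \nuop h,\rho-1\rangle_\nu=\nu(h)\int(\rho-1)\,\diff\nu=0$ lets you insert the projection $\nuop$ for free and recover exactly $\mathcal H^{j+n_b}(h)$. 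Your integrability remarks are the right ones to make; the only cosmetic wrinkle is that the lemma's displayed definition of $\mathcal H^i$ nominally asks for $h\in L_2(\mathsf X,\nu)$, whereas the arguments $g^2$ and $g\,P^{k-j}g$ are in general only in $L_1(\mathsf X,\nu)$ — but since $\rho-1\in L_\infty(\mathsf X,\nu)$ the $L_1$–$L_\infty$ pairing you invoke is exactly what is needed, and the paper itself uses $\mathcal H$ on these same $L_1$ functions in the proof of Theorem \ref{thm:namsegen}. What your approach buys is transparency: the identity is seen to be purely algebraic bookkeeping, independent of any ergodicity or spectral-gap hypothesis, which the citation obscures.
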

We can now prove Theorem \ref{thm:namsegen}. 
\begin{proof}[Proof of Theorem \ref{thm:namsegen}] Once again, for the remainder of this proof we write $ L_q= L_q(\mathsf{X},\nu), \ q\in[1,\infty].$ From Lemma \ref{lemma:aux_mse}  we get 
\begin{align}
\mathcal{H}^{j+{n_b}}(g^2) &=\left \langle ( P^{j+{n_b}}-\nuop)g^2,\left(\frac{\diff \nu^0}{\diff \nu}-1\right)\right \rangle_{\nu}, \label{eq:hg2}\\
\mathcal{H}^{j+{n_b}}(gP^{k-j} g)&=\left \langle ( P^{j+{n_b}}-\nuop)(gP^{k-j} g),\left(\frac{\diff \nu^0}{\diff \nu}-1\right)\right \rangle_{\nu}.\label{eq:hgpg}
\end{align} 	
 Using H\"older's inequality with $q'=\infty$, $q=1$ on the right hand side of \eqref{eq:hg2} gives
 \begin{align}
 \mathcal{H}^{j+{n_b}}(g^2) &\leq  \lno\frac{\diff \nu^0}{\diff \nu}-1 \rno_{L_\infty} \lno ( P^{j+{n_b}}-\nuop)g^2 \rno_{L_1} \\
 &\leq  \lno\frac{\diff \nu^0}{\diff \nu}-1 \rno_{L_\infty} \lno ( P^{j+{n_b}}-\nuop)\rno_{L_1\mapsto L_1}\lno g^2 \rno_{L_1} \label{eq:hg22},
 \end{align} 
 where the last inequality comes from the definition of operator norm. Moreover, since the Markov operators are weak contractions, we have that $\lno ( P^{j+{n_b}}-\nuop)\rno_{L_1\mapsto L_1}\leq 2, \forall j\in\mathbb{N}$,
 which gives the bound 
 \begin{align}
 \mathcal{H}^{j+{n_b}}(g^2)\leq  2\lno\frac{\diff \nu^0}{\diff \nu}-1 \rno_{L_\infty} \lno g \rno_{L_2}^2.
 \end{align}
 Summing over $j$ gives 
\begin{align}
\frac{1}{N^2}\sum_{j=1}^{N} \mathcal{H}^{j+{n_b}}(g^2) \leq \frac{2\lno g \rno_{L_2}^2}{N}\lno\frac{\diff \nu^0}{\diff \nu}-1 \rno_{L_\infty}. \label{eq:hg23}
\end{align}
Following similar procedure for \eqref{eq:hgpg} we obtain
\begin{align}
 \mathcal{H}^{j+{n_b}}(g P^{k-j}g)\leq 2 \lno\frac{\diff \nu^0}{\diff \nu}-1 \rno_{L_\infty} \lno g( P^{k-j})g \rno_{L_1}. \label{eq:Hgpg2}
\end{align}
Furthermore, from H\"older's inequality (with $q'=q=2)$ and the fact that $\nuop(g)=0$ we get
 \begin{align}
\lno g( P^{k-j})g \rno_{L_1}&\leq \lno g \rno_{L_2}\lno  P^{k-j}g\rno_{L_2} 
= \lno g \rno_{L_2}\lno ( P-\nuop)^{k-j}g\rno_{L_2}\\&\leq \lno g \rno_{L_2}^2\lno ( P-\nuop)^{k-j}\rno_{L_2\mapsto L_2}\leq\lno g \rno_{L_2}^2\left(1-k_\mathrm{ps}\gamma_\mathrm{ps}[P]\right)^{\frac{1}{2}\lfloor \frac{k-j}{k_\mathrm{ps}}\rfloor},
\end{align}
where the last inequality follows from the same pseudo-spectral gap argument used in the proof of Lemma \ref{lemma:mse_stationarity}. Adding over $j$ and $k$ gives 
\begin{align}
\frac{2}{N^2}\sum_{j=1}^{N-1}\sum_{k=j+1}^{N}\mathcal{H}^{j+{n_b}}(gP^{k-j} g)\leq \frac{8\lno g\rno_{L_2}^2}{N \gamma_{\mathrm{ps}}}\lno\frac{\diff \nu^0}{\diff \nu}-1 \rno_{L_\infty} \label{eq:Hgpg3}.
\end{align}
Notice then that Equations \eqref{eq:hg23} and \eqref{eq:Hgpg3}, provide a bound on the second and third term in Lemma \ref{lemma:aux_mse}. Lastly, combining these results with Lemma \ref{lemma:mse_stationarity} and once again observing that $\lno g\rno_{L_2}^2=\V_{\nu}[f]$, gives  the desired result
\begin{align}
&\eqref{eq:mse_ns}\leq \frac{\V_{\nu}[f]}{N}\left(1+\frac{2}{\gamma_\mathrm{ps}[P]}\right) +\frac{\V_{\nu}[f]}{N}\left(2\lno\frac{\diff \nu^0}{\diff \nu}-1 \rno_{L_\infty}\left(1+\frac{4}{\gamma_{\mathrm{ps}}[ P]}\right)\right).
\end{align}
		\end{proof}

\bibliography{bib}

\begin{thebibliography}{10}

\bibitem{asmussen2007stochastic}
S{\o}ren Asmussen and Peter~W Glynn.
\newblock {\em Stochastic simulation: algorithms and analysis}, volume~57.
\newblock Springer Science \& Business Media, 2007.

\bibitem{beskos2017multilevel}
Alexandros Beskos, Ajay Jasra, Kody Law, Raul Tempone, and Yan Zhou.
\newblock Multilevel sequential {{Monte Carlo}} samplers.
\newblock {\em Stochastic Processes and their Applications}, 127(5):1417--1440,
  2017.

\bibitem{billing}
Patrick Billingsley.
\newblock {\em Convergence of probability measures}.
\newblock Wiley Series in Probability and Statistics: Probability and
  Statistics. John Wiley \& Sons Inc., New York, second edition, 1999.
\newblock A Wiley-Interscience Publication.

\bibitem{brooks2011handbook}
Steve Brooks, Andrew Gelman, Galin Jones, and Xiao-Li Meng.
\newblock {\em {Handbook of {Markov} Chain {Monte Carlo}}}.
\newblock CRC press, 2011.

\bibitem{cliffe2011multilevel}
K~Andrew Cliffe, Mike~B Giles, Robert Scheichl, and Aretha~L Teckentrup.
\newblock {Multilevel {Monte Carlo} methods and applications to elliptic {PDE}s
  with random coefficients}.
\newblock {\em Computing and Visualization in Science}, 14(1):3, 2011.

\bibitem{collier2015continuation}
Nathan Collier, Abdul-Lateef Haji-Ali, Fabio Nobile, Erik Von~Schwerin, and
  Ra{\'u}l Tempone.
\newblock A continuation multilevel {Monte Carlo} algorithm.
\newblock {\em BIT Numerical Mathematics}, 55(2):399--432, 2015.

\bibitem{cui2019multilevel}
Tiangang Cui, Gianluca Detommaso, and Robert Scheichl.
\newblock Multilevel dimension-independent likelihood-informed {MCMC} for
  large-scale inverse problems.
\newblock {\em arXiv preprint arXiv:1910.12431}, 2019.

\bibitem{cui2016dimension}
Tiangang Cui, Kody~JH Law, and Youssef~M Marzouk.
\newblock Dimension-independent likelihood-informed {MCMC}.
\newblock {\em Journal of Computational Physics}, 304:109--137, 2016.

\bibitem{dodwell2015hierarchical}
Tim~J Dodwell, Chris Ketelsen, Robert Scheichl, and Aretha~L Teckentrup.
\newblock A hierarchical multilevel {{Markov} Chain {Monte Carlo}} algorithm
  with applications to uncertainty quantification in subsurface flow.
\newblock {\em SIAM/ASA Journal on Uncertainty Quantification},
  3(1):1075--1108, 2015.

\bibitem{dudley2018real}
Richard~M Dudley.
\newblock {\em Real analysis and probability}.
\newblock CRC Press, 2018.

\bibitem{flegal2011implementing}
James~M Flegal and Galin~L Jones.
\newblock Implementing {MCMC}: estimating with confidence.
\newblock {\em Handbook of {Markov} Chain {Monte Carlo}}, pages 175--197, 2011.

\bibitem{giles2008multilevel}
Michael~B Giles.
\newblock {Multilevel {Monte Carlo} Path Simulation}.
\newblock {\em Operations Research}, 56(3):607--617, 2008.

\bibitem{haji2016multi}
Abdul-Lateef Haji-Ali, Fabio Nobile, and Ra{\'u}l Tempone.
\newblock Multi-index {{Monte Carlo}}: when sparsity meets sampling.
\newblock {\em Numerische Mathematik}, 132(4):767--806, 2016.

\bibitem{hastings1970monte}
W~Keith {Hastings}.
\newblock {{Monte Carlo}} sampling methods using {{Markov}} chains and their
  applications.
\newblock {\em Oxford University Press}, 1970.

\bibitem{hoang2013complexity}
Viet~Ha Hoang, Christoph Schwab, and Andrew~M Stuart.
\newblock {Complexity analysis of accelerated {MCMC} methods for Bayesian
  inversion}.
\newblock {\em Inverse Problems}, 29(8):085010, 2013.

\bibitem{jasra2018multi}
Ajay Jasra, Kengo Kamatani, Kody J.~H. Law, and Yan Zhou.
\newblock A multi-index {{Markov} Chain {Monte Carlo}} method.
\newblock {\em International Journal for Uncertainty Quantification},
  8(1):61--73, 2018.

\bibitem{jasra2017multilevel}
Ajay Jasra, Kengo Kamatani, Kody~JH Law, and Yan Zhou.
\newblock Multilevel particle filters.
\newblock {\em SIAM Journal on Numerical Analysis}, 55(6):3068--3096, 2017.

\bibitem{jasra2018markov}
Ajay Jasra, Kody Law, and Yaxian Xu.
\newblock {Markov} chain simulation for multilevel {Monte Carlo}.
\newblock {\em arXiv preprint arXiv:1806.09754}, 2018.

\bibitem{latz2020generalized}
Jonas Latz, Juan~P Madrigal-Cianci, Fabio Nobile, and Raul Tempone.
\newblock Generalized parallel tempering on {B}ayesian inverse problems.
\newblock {\em arXiv preprint arXiv:2003.03341}, 2020.

\bibitem{latz2017multilevel}
Jonas Latz, Iason Papaioannou, and Elisabeth Ullmann.
\newblock Multilevel sequential2 monte carlo for bayesian inverse problems.
\newblock {\em Journal of Computational Physics}, 368:154--178, 2018.

\bibitem{Logg2012automated}
Anders Logg, Kent-Andre Mardal, and Garth Wells.
\newblock {\em Automated solution of differential equations by the finite
  element method: The FEniCS book}, volume~84.
\newblock Springer Science \& Business Media, 2012.

\bibitem{mengersen1996rates}
Kerrie~L Mengersen, Richard~L Tweedie, et~al.
\newblock Rates of convergence of the {Metropolis} and {Hastings} algorithms.
\newblock {\em The annals of Statistics}, 24(1):101--121, 1996.

\bibitem{metropolis1953equation}
Nicholas {Metropolis}, Arianna~W Rosenbluth, Marshall~N Rosenbluth, Augusta~H
  Teller, and Edward Teller.
\newblock Equation of state calculations by fast computing machines.
\newblock {\em The journal of chemical physics}, 21(6):1087--1092, 1953.

\bibitem{meyn2012markov}
Sean~P Meyn and Richard~L Tweedie.
\newblock {\em {Markov} chains and stochastic stability}.
\newblock Springer Science \& Business Media, 2012.

\bibitem{papamakarios2019normalizing}
George Papamakarios, Eric Nalisnick, Danilo~Jimenez Rezende, Shakir Mohamed,
  and Balaji Lakshminarayanan.
\newblock Normalizing flows for probabilistic modeling and inference.
\newblock {\em arXiv preprint arXiv:1912.02762}, 2019.

\bibitem{paulin2015concentration}
Daniel Paulin.
\newblock Concentration inequalities for {Markov} chains by {M}arton couplings
  and spectral methods.
\newblock {\em Electronic Journal of Probability}, 20, 2015.

\bibitem{pisaroni2017continuation}
Michele Pisaroni, Fabio Nobile, and P{\'e}n{\'e}lope Leyland.
\newblock {A Continuation Multi Level {Monte Carlo}} {(C-MLMC)} method for
  uncertainty quantification in compressible inviscid aerodynamics.
\newblock {\em Computer Methods in Applied Mechanics and Engineering},
  326:20--50, 2017.

\bibitem{roberts1997geometric}
Gareth Roberts, Jeffrey Rosenthal, et~al.
\newblock Geometric ergodicity and hybrid {M}arkov chains.
\newblock {\em Electronic Communications in Probability}, 2:13--25, 1997.

\bibitem{roberts2004general}
Gareth~O Roberts, Jeffrey~S Rosenthal, et~al.
\newblock General state space {Markov} chains and {MCMC} algorithms.
\newblock {\em Probability surveys}, 1:20--71, 2004.

\bibitem{roberts1999bounds}
Gareth~O Roberts and Richard~L Tweedie.
\newblock Bounds on regeneration times and convergence rates for {M}arkov
  chains.
\newblock {\em Stochastic Processes and their applications}, 80(2):211--229,
  1999.

\bibitem{rudolf2009explicit}
Daniel Rudolf.
\newblock Explicit error bounds for lazy reversible {{Markov} chain {Monte
  Carlo}}.
\newblock {\em Journal of Complexity}, 25(1):11--24, 2009.

\bibitem{rudolf2011explicit}
Daniel Rudolf.
\newblock Explicit error bounds for {Markov chain Monte Carlo}.
\newblock {\em Dissertationes Mathematicae}, 485:1--93, 2012.

\bibitem{silverman1986density}
Bernard~W Silverman.
\newblock {\em Density estimation for statistics and data analysis}, volume~26.
\newblock CRC press, 1986.

\bibitem{tierney1998note}
Luke Tierney.
\newblock A note on {Metropolis}-{Hastings} kernels for general state spaces.
\newblock {\em Annals of applied probability}, pages 1--9, 1998.

\end{thebibliography}
\bibliographystyle{plain}
\newpage

\end{document}